\def \ra {\rightarrow}
\def \E {\mathbb{E}}
\def \Re {\mathbb{R}}
\def \a {\alpha}
\def \be {\beta}
\newtheorem{example}{\bf Example}
\newtheorem{exm}[example]{\bf Example}
\newtheorem{definition}{\bf Definition}
\newtheorem{defn}[definition]{\bf Definition}
	\newtheorem{theorem}{\bf Theorem}
	\newtheorem{prop}{\bf Proposition}
	\newtheorem{lem}[theorem]{\bf Lemma}
	\newtheorem{as}{\bf Assumption}
\renewcommand{\epsilon}{\varepsilon}
\begin{document}

\begin{frontmatter}

\title{On Lock-down Control of a Pandemic Model}
\runtitle{Pandemic control}

\begin{aug}
\author{\fnms{Paramahansa} 
	\snm{Pramanik}
	\ead[label=e1]{ppramanik@southalabama.edu}}

\runauthor{P. Pramanik}

\affiliation{University of South Alabama}

\address{Department of Mathematics and Statistics\\ University of South Alabama\\ Mobile, AL 36688 USA.\\email:ppramanik@southalabama.edu\\Phone:251-341-3098.}

\end{aug}
  
\begin{abstract}
In this paper a Feynman-type path integral control approach is used for a recursive formulation of a health objective function subject to a fatigue dynamics, a forward-looking stochastic multi-risk susceptible-infective-recovered (SIR) model with risk-group's Bayesian opinion dynamics towards vaccination against COVID-19. My main interest lies in solving a minimization of a policy-maker's social cost   which depends on some deterministic weight. I obtain an optimal lock-down intensity from a Wick-rotated Schr\"odinger-type equation which is analogous to a Hamiltonian-Jacobi-Bellman (HJB) equation. My formulation is based on path integral control and dynamic programming tools facilitates the analysis and permits the application of algorithm to obtain numerical solution for pandemic control model. Feynman path integral is a quantization method which uses the quantum Lagrangian function, while Schr\"odinger's quantization uses the Hamiltonian function. These two methods are believed to be equivalent but, this equivalence has not fully proved mathematically. As the complexity and memory requirements of grid-based partial differential equation (PDE) solvers increase exponentially as the dimension of the system increases, this method becomes impractical in the case with high dimensions. As an alternative path integral control solves a class a stochastic control problems with a Monte Carlo method for a HJB equation and this approach avoids the need of a global grid of the domain of the HJB equation. 
\end{abstract}

\begin{keyword}[class=MSC]
\kwd[Primary ]{60H05}
\kwd[; Secondary ]{81Q30}
\end{keyword}

\begin{keyword}
\kwd{Pandemic control}
\kwd{Multi-risk SIR Model}
\kwd{Bayesian opinion network}
\kwd{Feynman-type path integrals}
\kwd{stochastic differential equations}
\end{keyword}

\end{frontmatter}

\section{Introduction}
 In current days we see ``locking downs" of economies as a strategy to reduce the spread of COVID-19 which already has claimed more than 999,790 lives in the United States and more than 6 millions across the globe. Multiple countries have started this strategy to all the sectors of their economies except some essential service sectors such as healthcare and public safety. Different States in the United States locked down during different time periods based on their infection rates and  extremely contagious transmission phase. Re-opening has been prompted by slowing down the infection rate and wanes public activities \citep{caulkins2021optimal}. Locking down an economy for a long time may impact severely in the sense that, people might not come outside their homes for socioeconomic activities. A possible reason might be they are too afraid to communicate in-person thinking about themselves getting infected by this virus. As a result, even if a store is open for business activities, it might face a reduction of customers and even a reduction of its own employees. This may affect its profit in the long-run. If it does not have enough inventories, the store might shut-down in the long-run. Therefore, a business can be shut down quickly, but it is hard to re-open as the government cannot fiat money to them to return to its previous level of employment \citep{caulkins2021optimal}. This might be a reason why Centers for Disease Control and Prevention (CDC) recommends a person infected with Omicron should isolate themselves for five days.
 
 Condition for shut-down is determined  when a healthcare cost function is minimized subject to a stochastic multi-risk Susceptible-Infectious-Recovered (SIR) model \citep{kermack1927}. Almost all mathematical models of transmission of infectious disease models come from SIR model. This is the main reason to use this model. A lot of studies regarding dynamic behavior of different epidemic models have been done \citep{beretta1995,ma2004,xiao2007,rao2014,ahamed2021}. The deterministic part of this stochastic SIR model consists saturated transmission rate which depends on the location of that person. If that person commutes to or stay in the urban area, then they might have interaction with more people than a person who lives in a rural area, which reflects to a higher chance of getting infected. Diffusion part of the SIR model is needed when a person living in the rural area visits a city because of some arbitrary needs and gets in touch with others. On the other hand, poor air quality causes respiratory illness, affects adversely to cardiovascular health and deteriorates life expectancy \citep{delfino2005,albrecht2021}. In a similar manner random factor from the environment such as sudden change in the air quality due to volcanic eruptions, storms, wildfires and floods can affect the air quality drastically and lead to a more vulnerable atmosphere. Preexisting health conditions like obesity, diabetes, hypertension, weak immune system and higher age put a person towards higher risk to get infected by COVID-19 \citep{richardson2020,albrecht2021}. 
 
  In this paper a Feynman-type path integral approach has been used for a recursive formulation of a health objective function with a stochastic fatigue dynamics, \emph{forward-looking} stochastic multi-risk SIR model and a Bayesian opinion network of a risk-group towards vaccination against COVID-19. My main interest lies in solving a minimization problem $\bf{H}_{\theta}$ which depends on a deterministic weight $\theta$ \citep{marcet2019}. A Wick-rotated Schr\"odinger type equation (i.e. a Fokker-Plank diffusion equation) is obtained which is an analogous to a HJB equation \citep{yeung2006} and a \emph{saddle-point functional equation} \citep{marcet2019}. My formulation is based on path integral control and dynamic programming tools facilitates the analysis and permits the application of algorithm to obtain numerical solution for this stochastic pandemic control model. Furthermore, $\mathbf{H}_{\theta}$ with given initial conditions, is labeled as a \emph{continuation problem} as its solution coincides with the solution from period $s$ on wards \citep{marcet2019}. A terminal condition of the policy maker's objective function makes it as a \emph{Lagrangian problem} \citep{intriligator2002}.
  
  \emph{Feynman path integral} is a quantization method which uses the quantum \emph{Lagrangian} function, while\\ Schr\"odinger's quantization uses the \emph{Hamiltonian} function \citep{fujiwara2017}. As this path integral approach provides a different view point from Schr\"odinger's quantization,it is very useful tool not only in quantum physics but also in engineering, biophysics, economics and finance \citep{kappen2005,anderson2011,yang2014path,fujiwara2017}. These two methods are believed to be equivalent but, this equivalence has not fully proved mathematically as the mathematical difficulties lie in the fact that the \emph{Feynman path integral} is not an integral by means of a countably additive measure \citep{johnson2000,fujiwara2017}. As the complexity and memory requirements of grid-based partial differential equation (PDE) solvers increase exponentially as the dimension of the system increases, this method becomes impractical in the case with high dimensions \citep{yang2014path}. As an alternative one can use a Monte Carlo scheme and this is the main idea of \emph{path integral control} \citep{kappen2005,theodorou2010,theodorou2011,morzfeld2015}. This \emph{path integral control} solves a class a stochastic control problems with a Monte Carlo method for a HJB equation and this approach avoids the need of a global grid of the domain of HJB equation \citep{yang2014path}. If the objective function is quadratic and the differential equations are linear, then solution is given in terms of a number of Ricatti equations which can be solved efficiently \citep{kappen2007b,pramanik2020motivation,pramanik2021,pramanik2021scoring}. Although incorporate randomness with its HJB equation is straight forward but difficulties come due to dimensionality when a numerical solution is calculated for both of deterministic or stochastic HJB \citep{kappen2007b}. General stochastic control problem is intractable to solve computationally as it requires an exponential amount of memory and computational time because, the state space needs to be discretized and hence, becomes exponentially large in the number of dimensions \citep{theodorou2010,theodorou2011,yang2014path}. Therefore, in order to calculate the expected values it is necessary to visit all states which leads to the summations of exponentially large sums \citep{kappen2007b,yang2014path,pramanik2021}.
 
  \cite{acemoglu2020} suggests that, more restrictive policies about social interaction with people with advanced age reduce the COVID-19 infection for the rest of the population. In \cite{acemoglu2020} the population is divided into three age groups: young (22-44), middle-aged (45-65), and advanced-aged ($65+$) where the only differences in interactions between these groups come from different lock-down policies. Then they applied a deterministic multi-risk SIR model in each group and suggested that  using a uniform lock-down policy for the policymakers targeting stricter lock-down policy to more advanced aged population, the fatality rate due to COVID-19 would be just above $1\%$ (where uniform policy leads to a $1.8\%$ fatality rate). Targeted policy reduces the economic damage from $24.3\%$ to $12.8\%$ of yearly gross domestic product (GDP) \citep{acemoglu2020}. Furthermore, when targeted policies such as changing in norms and laws segregating the young population from the older are imposed, fatalities and economic damages because of COVID-19 can be substantially low \citep{acemoglu2020}. 
 
 The solutions to the optimal ``locking down" problem are very complicated in the sense that, if an economy imposes a stricter policy for a long time, it would be able to reduce the infection rate at a very low level. On the other hand, if the lock-down is short then, the policy makers are softening the infection rate of COVID-19 from touching down the peak \citep{caulkins2021optimal}. Another important assumption is that, the information regarding spreading of COVID-19 transmission is incomplete and imperfect. Therefore, one might have multiple \emph{Skiba} points or multiple solutions and none of them are unique. Rigorous studies about Skiba points have been done in \cite{skiba1978,grass2012} and \cite{sethi2019}. Although there is a growing literature on COVID-19 and its socioeconomic impacts related to extended lock-down time, length of lock-down and the appropriate time to lock down have not been studied that much \citep{caulkins2021optimal}. Furthermore, I am using a new Feynman-type path integral approach which has an advantage over traditional Hamiltonian-Jacobi-Bellman (HJB) approach as the complexity and memory requirements of grid-based partial differential equation increases exponentially with the dimension of the system \citep{yang2014,pramanik2020optimization,pramanik2021}.
 
 One can transform a class of non-linear HJB equations into linear equations by doing a logarithmic transformation. This transformation stems back to the early days of quantum mechanics which was first used by Schr\"odinger to relate HJB equation to the Schrödinger equation \citep{kappen2007a}. Because of this linear feature, backward integration of HJB equation over time can be replaced by computing expectation values under a forward diffusion process which requires a stochastic integration over trajectories that can be described by a path integral \citep{kappen2007a,pramanik2019,pramanik2021thesis}. Furthermore, in more generalized case like Merton-Garman-Hamiltonian system, getting a solution through Pontryagin Maximum principle is impossible and Feynman path integral method gives a solution \citep{baaquie1997,pramanik2020,pramanik2021,pramanik2021reg}. Previous works using Feynman path integral method has been done in motor control theory by \cite{kappen2005}, \cite{theodorou2010} and \cite{theodorou2011}. Applications of Feynman path integral in finance has been discussed rigorously in \cite{belal2007}.  A key assumption to get HJB is that the feasible set of action is constrained by a set of state and control variables only which does not satisfy many economic problems with \emph{forward-looking} constraints, where the future actions are also in the feasible set of actions \citep{marcet2019}. In the presence of a \emph{Forward-looking} constraints, optimal plan does not satisfy Pontryagin's maximum principle \citep{yeung2006} and the standard form of the solution ceases to exist because, the choice of an action carries an implicit promise about a future action \citep{marcet2019}. The absence of a standard recursive \citep{ljungqvist2012} formulation complicates the dynamic control problem with high dimensions and fails to give a numerical solution of the system \citep{yang2014,marcet2019}.
 
 Another important context is the rate of spread of COVID-19 in a community. The question of immunity and susceptibility is critical to the statistical analysis of infectious disease like COVID-19. Under the assumption that everybody in a community is susceptible to this pandemic one may be led to think that it is mildly infectious \citep{becker2017}. On the other hand, if everybody who had previously acquired immunity, is able to escape infection during this pandemic, one should conclude that it is highly infectious. Furthermore, immunity status of individuals  assessed by the tests on blood, saliva or excreta samples, is another determinant about the intensity of the spread of this pandemic \citep{becker2017}. Therefore, we are using network graph analysis to determine the spread of the infection. Based on the groups I have classified the social network directed graph and determine the adjacency matrix without existence of a loop. Furthermore, an undirected  network graph leads to a symmetric adjacency matrix \citep{pramanik2016,hua2019,polansky2021}. The diagonal terms of this matrix is zero and the off-diagonal terms have different values based on their weight in relation to the other persons in a community. For example, I give higher value to parents, spouses and siblings of a person compared to a person in distant relationship because if our person of interest gets infected by COVID-19, their parents, spouses and siblings are the ones who would be in risk to get infected by the pandemic.
 
 Opinion towards taking the vaccine is another important factor to determine the spread of COVID-19. When the policymakers in the United States has decided to mandate vaccination in all the public sector employees, many people have gone for a protest and significant number of government employees take leave from their duties  which has affected negatively towards those sectors such as New York Fire and Chicago Police Departments. Main reasons are: people think Government mandate for vaccination is against the civil right and, religious beliefs respectively. As social networks are the results of individual opinions, consensus towards the opinions regarding COVID-19 vaccine mandate takes an important role to understand the formation of spreading of infection in it. Although a lot of theoretical works on social networks have been done \citep{jackson2010,goyal2012,sheng2020}, work on effects of personal opinions towards the vaccine mandate on influencing of the spread of this disease is insignificant. \cite{sheng2020} formalizes network as simultaneous-move game, where social links based on decisions are based on utility externalities from indirect friends and proposes a computationally feasible partial identification approach for large social networks. The statistical analysis of network formation goes dates back to the seminal work by \cite{erdos1959} where a random graph is based on independent links with a fixed probability \citep{sheng2020}. Beyond Erd\"os-R\'enyi model, many methods have been designed to simulate graphs with  characteristics like degree distributions, small world, and Markov type properties \citep{polansky2021,pramanik2021consensus}. 
 
 Following is the structure of this paper. Beginning part of Section 2 discuss about about different COVID-19 spread and the definition of lock-down intensity. Section 2.1 talks about different stochastic dynamics needed for my analysis and their properties, Section 2.2 discuss about Bayesian opinion dynamics of a risk-group towards vaccination against COVID-19 and Section 2.3 discuss about the objective function of a policy maker. Theorem \ref{t0} in Section 3 is the main result of the paper. A closed form solution of lock-down intensity is calculated at the end of section 3 and finally, Section 4 discuss about the conclusion and future research of this context. 
  
 \section{Formulation of a Pandemic Model}
 
In this section, I provide the construction of a stochastic SIR model, fatigue dynamics, infection rate dynamics, opinion dynamics against  COVID-19 vaccination with a dynamic social cost as the objective function. Furthermore, I discuss how the stochastic programming method can be used to formulate a recursive formulation of a large class of pandemic control models with forward-looking stochastic dynamics.

\cite{acemoglu2020} considers three age groups young (22-44 years), middle-aged (45-65 years) and advanced-aged (65+ years). One can construct $K$ total number age-groups based on a group's vulnerability to COVID-19.  I assume equal group sizes for simplicity. For finite and continuous time $s\in[0,t]$ define a group vulnerable to COVID-19 is  $k$ such that, $k=1,2,...,K$ with $N_k$ be the initial population of an economy. Furthermore, I determine $K$ large enough to ensure every agent in an age-group has homegenous behavior.  At time $s$, the age-group (I will use the term \emph{risk-group} instead of age-group because each group is vulnerable to COVID-19 at certain extent) $k$ is subdivided into those susceptible (S), those infected (I), those recovered (R) and those deceased (D), $$ S_k(s)+I_k(s)+R_k(s)+D_k(s)=N_k.$$ Individuals in risk-group $k$ move from susceptible to infected, then either recover or pass away as well as groups also interact among themselves. 
\begin{figure}[H]
	\centering
	\begin{subfigure}{.6\textwidth}
		\centering
		\includegraphics[width=.6\linewidth]{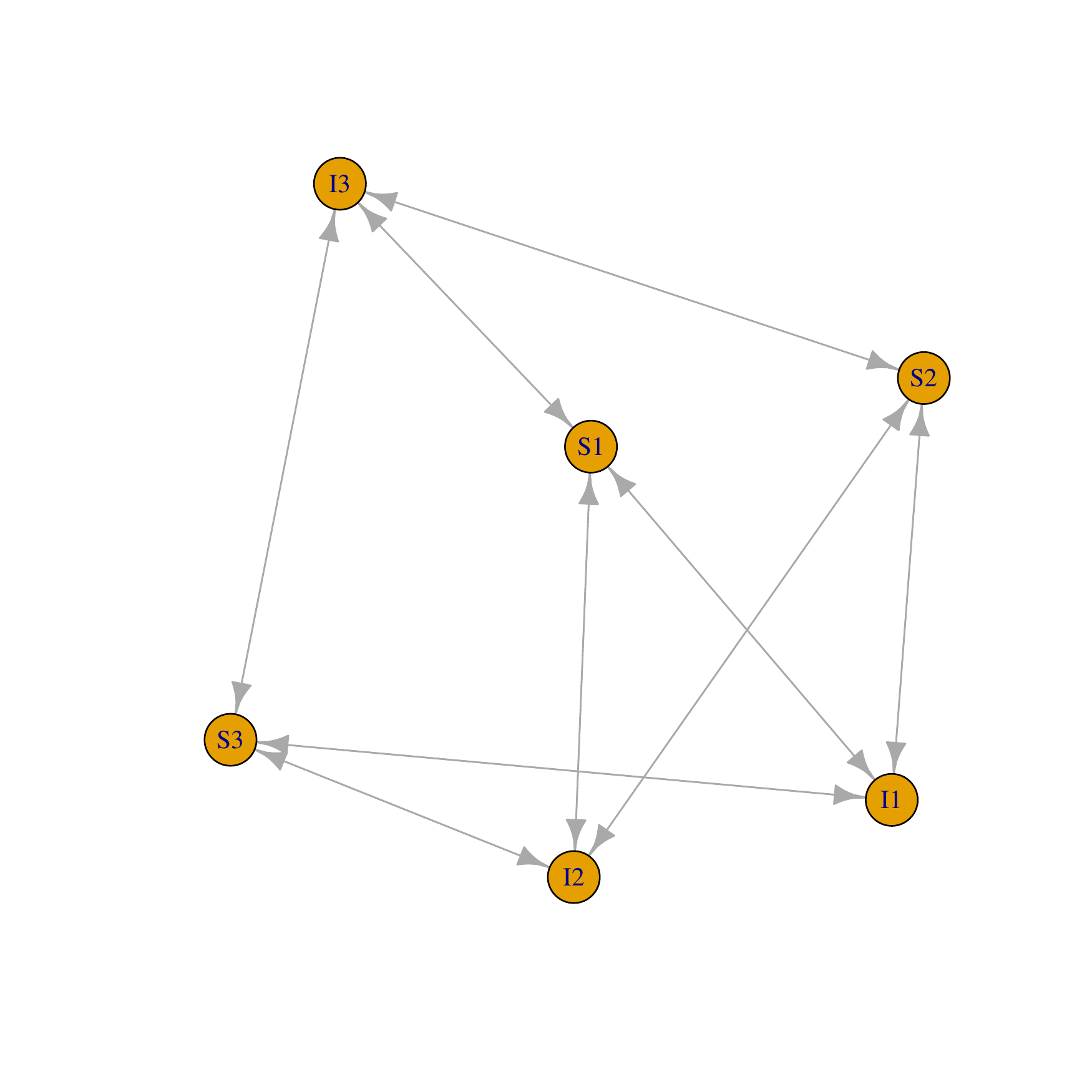}
		\caption{Connectivity between S and I among three risk groups.}
		\label{fig:sub1}
	\end{subfigure}
	\begin{subfigure}{.6\textwidth}
		\centering
		\includegraphics[width=.6\linewidth]{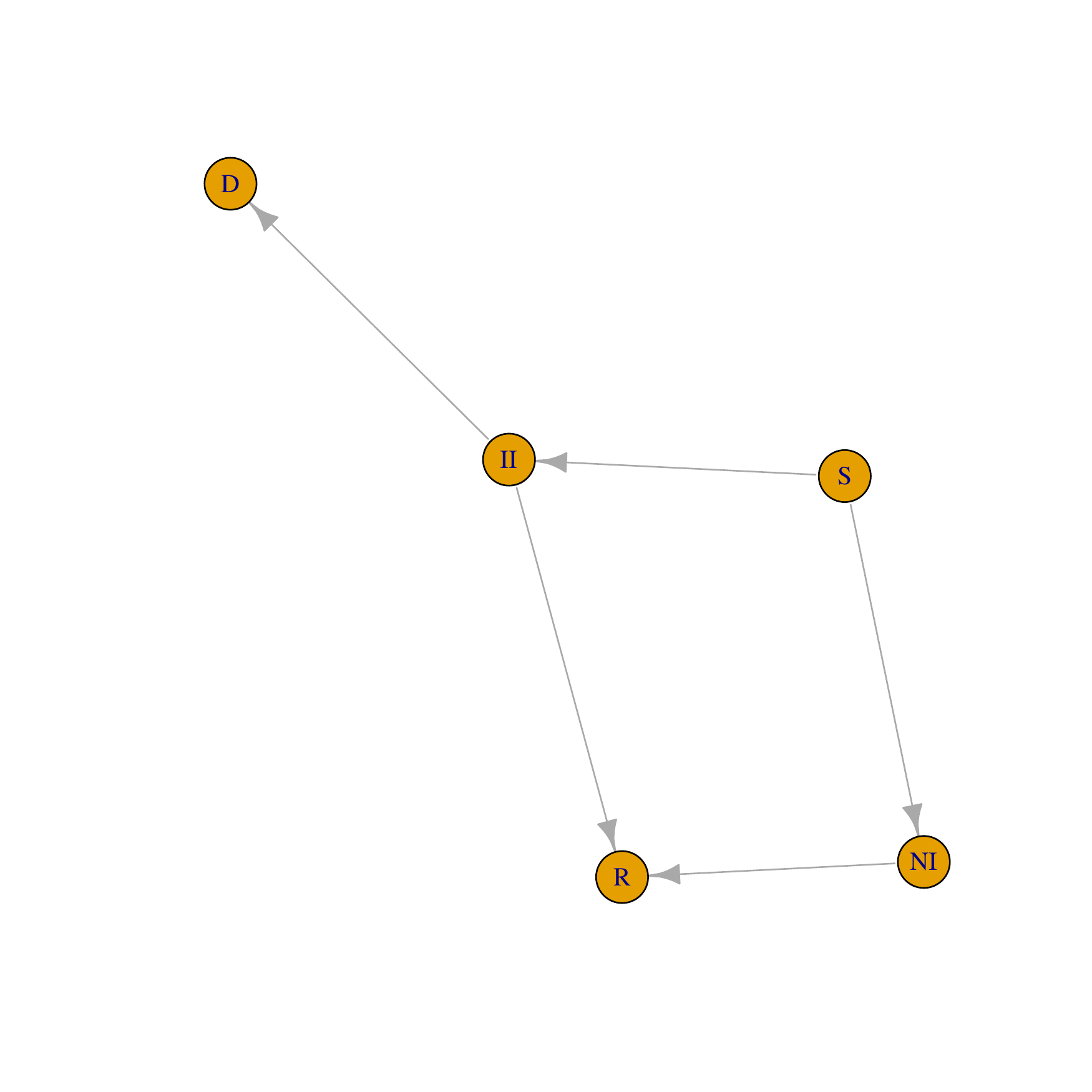}
		\caption{SIR for a single risk group.}
		\label{fig:sub2}
	\end{subfigure}
	\caption{Left panel represents the connectivity between Susceptibility (S) and Infection (I) among three risk-groups (i.e. young, middle-aged and old) while the right panel represents the state of an individual where NI represents a person is infected and under Non-ICU treatment while II indicates an individual is infected and is under ICU care. }
	\label{fig:test}
\end{figure}
In Figure \ref{fig:test} one can see how the state of an individual moves among the risk groups. Furthermore, the virus spreads exponentially. Therefore, the COVID-19 transmission follows a dynamic \emph{Barabasi-Albert} model where each new node is connected with existing nodes with a probability proportional to the number of links that the existing nodes already have \citep{barabasi1999}. 
\begin{figure}[H]
	\centering
	\begin{subfigure}{.7\textwidth}
		\centering
		\includegraphics[width=.7\linewidth]{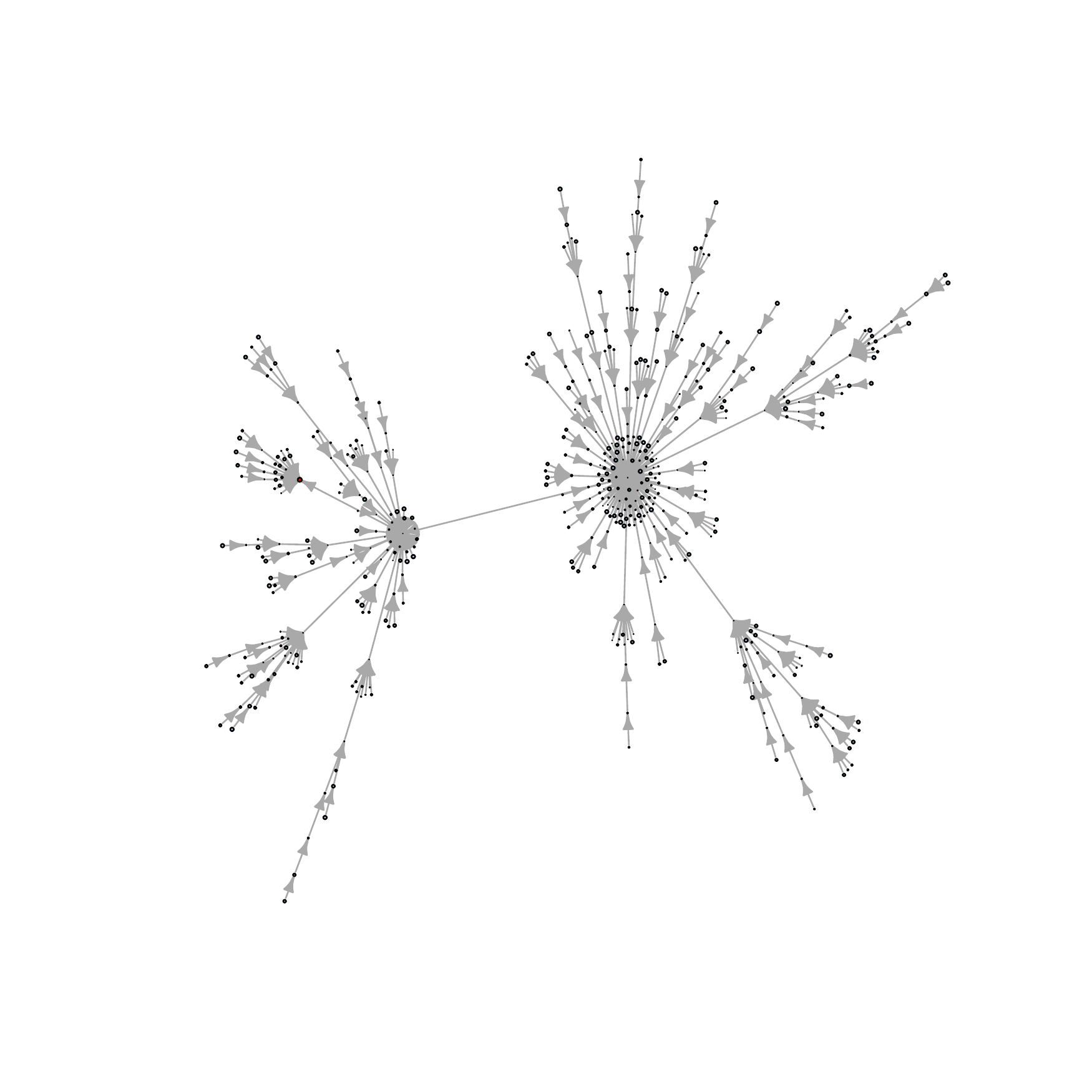}
		\end{subfigure}
	\begin{subfigure}{.7\textwidth}
		\centering
		\includegraphics[width=.7\linewidth]{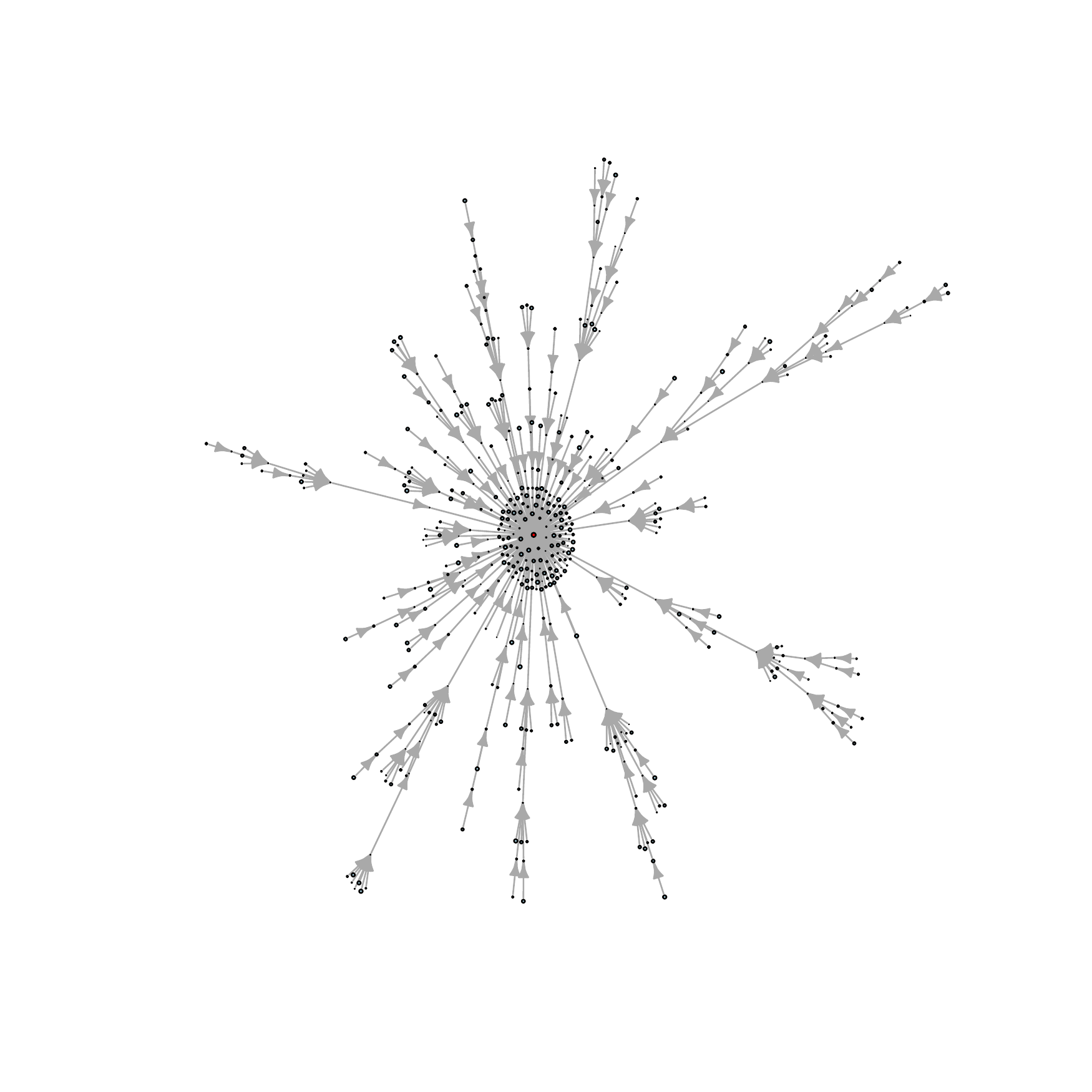}
		\end{subfigure}
	\caption{Two realizations of COVID-19 spread according to Barabasi-Albert model with $500$ vertices. }
	\label{fig:0}
\end{figure}
In Figure \ref{fig:0}, I construct two realizations of random COVID-19 spread  where the probability of each node depends on a person's immunity level. 

As lockdown and social distancing reduce interaction among people, I will treat ``lockdown" as a policy. Let for risk-group $k$, $L_k^b(s)$ is the total number of people willing to work before the pandemic and $L_k^a(s,c)$ is the total number of people willing to work during pandemic which is a function of lockdown fatigue (due to COVID-19 deaths in $k^{th}$ risk-group) is denoted by $c(s)$. Suppose, $d_1,d_2,\in(0,1)^2$ are the factors representing the proportions of  $L_k^b(s)$ and $L_k^a(s,c)$ respectively who are actually working. Define a new variable $e_k(s)=\frac{d_2L_k^a(s,c)}{d_1L_k^b(s)}$ as actual number of people working during pandemic as a proportion of those who are suppose to work without presence of COVID-19. At the very early stages as people have little knowledge about COVID therefore, $e_k(s)>1$. Furthermore, due to discoveries of vaccines and the incidence of the disease for more than a year, people's opinion against vaccination might lead indifference in behavior towards going to work or not. Therefore, $e_k(s)\downarrow 1$. In this case, policy makers come to place to restrict employment such that $e_k(s)\in (0,1)$. Thus, under policy-maker's intervention $e_k(s)=\frac{d_0d_2L_k^a(s,c)}{d_1L_k^b(s)}$ where, $d_0\in[0,1]$ is the parameter which is predetermined by the policymakers to restrict employment during pandemic. On the other hand, if the policy makers think an emergence of a  new variant of COVID-19 is random they fix $d_0=1$ and let the economy move on its way. For finite, continuous time $s\in[0,t]$ the ratio $e_k(0)=1$ and $e_k(t)\in[0,1]$ is based on the condition of pandemic. Hence, $\frac{\partial e_k}{\partial s}=u_k(s)$ represents the intensity of allowed employment and I use it as the stochastic control variable.

\subsection{Stochastic SIR Model}
Following \cite{caulkins2021optimal} I assume a state variable $z_k(s)$ capturing a ``lockdown fatigue" through a stochastic accumulation dynamics determined by COVID-19 related unemployment rate for risk-group $k$ is $[1-e_k(s)]$. The stochastic fatigue dynamics is given by
\begin{equation}\label{0}
dz_k(s)=[\kappa_0\{1-e_k(s)\}-\kappa_1z_k(s)p(\eta_{k_i},s)]ds+\sigma_0^k[z_k(s)-z_k^*]dB_0^k(s),
\end{equation}
where $\kappa_0$ indicates the rate of fatigue accumulation, $\kappa_1$ is the rate of exponential decay, 
\[
p(\eta_{k_i},s)=\left[\eta_{k_i}(s)\right]\left[\sum_{k_j=1}^{J-1}\eta_{k_j}(s)\right]
\]
 denotes the probability that a link of the new node connects to Barabasi-Albert node $k_i$ depends on the degree $\eta_{k_i}$ at time $s$ \citep{barabasi1999}, $\sigma_0^k$ is the diffusion coefficient,  $z_k^*$  is equilibrium value of $z_k$ and $B_0^k$ is a 1-dimensional Brownian motion. Under the absence of diffusion component  and under extreme lockdown (i.e., $e_k(s)=0$) this state variable takes its maximum value $Z_{\max}=\kappa_0/\left[\kappa_1p(\eta_{k_i},s)\right]$.

\begin{as}\label{as0}
	For $t>0$, let $\hat{\mu}(s,e_k,p,z_k):[0,t]\times [0,1]^2\times \mathbb{R} \ra\mathbb{R}$ and ${\sigma_0^k(z_k)}: \mathbb{R} \ra\mathbb{R}$ be some measurable function and, for some positive constant $K_1$,  $z_k\in\mathbb{R}$ we have linear growth as
	\[
	|\hat{{\mu}}(s,e_k,p,z_k)|+
	|{\sigma_0^k(z_k)}|\leq 
	K_1(1+|z|),
	\]
	such that, there exists another positive, finite, constant $K_2$ and for a different lockdown fatigue state variable $\tilde z_k$ 
	 such that the Lipschitz condition,
	\[
	|\hat{{\mu}}(s,e_k,p,z_k)-\hat{{\mu}}(s,e_k,p,\tilde z_k)|+|\sigma_0^k(z_k)-\sigma_0^k(\tilde z_k)|\leq K_2\ |z_k-\tilde z_k|,\notag
	\]
	$ \tilde z_k\in\mathbb{R}$ is satisfied and
	\[
	|\hat{{\mu}}(s,e_k,p,z_k)|^2+
	|\sigma_0^k(z_k)|^2\leq K_2^2
	(1+|\tilde z_k|^2).
	\]
\end{as}

\begin{as}\label{as1}
Assume $(\Omega,\mathcal F,\mathbb P)$ is the stochastic basis where the filtration $\{\mathcal F_s\}_{0\leq s\leq t}$ supports a 1-dimensional Brownian motion $B_0^k(s)=\\\{B_0^k(s)\}_{0\leq s\leq t}$. $\mathbb F^0$ is the collection of all $\mathbb R$-values progressively measurable process on $[0,t]\times\mathbb R$ and the subspaces are 
\[
\mathbb F^2:=\left\{ z_k\in\mathbb F^0;\ \E\int_0^t|z_k(s)|^2ds<\infty\right\}
\] 
and,
\[
\mathbb S^2:=\left\{Y_k\in\mathbb F^0;\ \E\sup_{0\leq s\leq t}|Y_k(s)|^2<\infty\right\},
\]
where $\Omega$ is the Borel $\sigma$-algebra and $\mathbb P$ is the probability measure \citep{carmona2016}. Furthermore, the 1-dimensional Brownian motion corresponding to lockdown fatigue for risk-group $k$ is defined as 
\[
B_0^k:=\left\{z_k\in\mathbb F^0;\ \sup_{0\leq s\leq t}|z_k(s)|<\infty;\ \mathbb P-a.s.\right\}.\]
\end{as}
 \begin{lem}\label{l0}
 Suppose the initial lockdown fatigue of $k^{th}$ risk group $z_k(0)\in\bf L^2$	is independent of Brownian motion $B_0^k(s)$ and the drift and the diffusion coefficients $\hat\mu(s,e_k,p,z_k)$ and $\sigma_0^k(z_k)$ respectively follow Assumptions \ref{as0} and \ref{as1} above. Then the lockdown fatigue dynamics in Equation (\ref{0}) is in space of the real valued process with filtration $\{\mathcal F_s\}_{0\leq s\leq t}$ and this space is denoted by $\mathbb{F}^2$. Furthermore, for some constant $c_0>0$, continuous time $s\in[0,t]$ and Lipschitz constants $\hat\mu$ and $\sigma_0^k$, the solution satisfies,
 \begin{equation}\label{1}
 \E \sup_{0\leq s\leq t}|z_k(s)|^2\leq c_0(1+\E|z_k(0)|^2)\exp{(c_0t)}.
 \end{equation}
 \end{lem}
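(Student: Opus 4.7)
The plan is to follow the classical recipe for $L^2$ moment estimates of SDE solutions under Lipschitz and linear-growth coefficients, which Assumption \ref{as0} supplies. First I would rewrite Equation (\ref{0}) in integral form,
\[
z_k(s) = z_k(0) + \int_0^s \hat\mu(r,e_k,p,z_k(r))\,dr + \int_0^s \sigma_0^k(z_k(r))\,dB_0^k(r),
\]
and appeal to the standard existence-uniqueness theorem for It\^o SDEs (e.g.\ Carmona, 2016) which, given the Lipschitz bound and linear growth of Assumption \ref{as0}, the filtered probability space of Assumption \ref{as1}, an $L^2$ initial condition $z_k(0)$ independent of $B_0^k$, and a continuous adapted driving noise, produces a unique strong solution that is progressively measurable and square integrable on $[0,t]$; this places $z_k$ in $\mathbb F^2$. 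The remaining work is the quantitative bound (\ref{1}).

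For the moment bound, I would apply $|a+b+c|^2 \leq 3(|a|^2+|b|^2+|c|^2)$ to the integral form, take $\sup_{0\leq s\leq u}$ for an arbitrary $u\in[0,t]$, and then take expectation. Cauchy--Schwarz controls the drift piece by
\[
\sup_{0\leq s\leq u}\Bigl|\int_0^s \hat\mu\,dr\Bigr|^2 \leq u\int_0^u |\hat\mu(r,e_k,p,z_k(r))|^2\,dr,
\]
while Doob's $L^2$ maximal inequality (or Burkholder--Davis--Gundy) controls the martingale piece by
\[
\E \sup_{0\leq s\leq u}\Bigl|\int_0^s \sigma_0^k(z_k)\,dB_0^k\Bigr|^2 \leq 4\,\E\int_0^u |\sigma_0^k(z_k(r))|^2\,dr.
\]
Inserting the linear-growth estimate $|\hat\mu|^2+|\sigma_0^k|^2 \leq 2K_1^2(1+|z_k|^2)$ from Assumption \ref{as0} and absorbing all prefactors into a single constant $c_0$ depending on $K_1$ and $t$, I would arrive at the integral inequality
\[
\E\sup_{0\leq s\leq u}|z_k(s)|^2 \leq c_0\bigl(1+\E|z_k(0)|^2\bigr) + c_0\int_0^u \E\sup_{0\leq r\leq v}|z_k(r)|^2\,dv.
\]
Gronwall's lemma applied to the nondecreasing function $u\mapsto \E\sup_{0\leq s\leq u}|z_k(s)|^2$ then yields the claimed $c_0(1+\E|z_k(0)|^2)\exp(c_0 t)$ bound.

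The main obstacle is the a priori finiteness required to legitimately invoke Gronwall: without knowing in advance that $\E\sup_{0\leq s\leq u}|z_k(s)|^2<\infty$, the inequality above is vacuous. The standard workaround is a localization by stopping times $\tau_n = \inf\{s\in[0,t]\,:\,|z_k(s)|>n\}\wedge t$. One carries the entire estimate through with $z_k$ replaced by the stopped process $z_k(\cdot\wedge\tau_n)$, whose sup-moment is automatically bounded by $n^2\vee\E|z_k(0)|^2$, obtains the Gronwall bound uniformly in $n$, and then passes to the limit $n\to\infty$ using Fatou's lemma together with $\tau_n\uparrow t$ a.s.\ (which itself follows from continuity of $z_k$ and the just-proved uniform bound). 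Beyond this truncation bookkeeping, the argument is a routine assembly of It\^o calculus, the Burkholder--Davis--Gundy inequality, and Gronwall, and the boundedness of $e_k(s)\in[0,1]$ and $p(\eta_{k_i},s)\in[0,1]$ causes no additional difficulty since they enter only through $\hat\mu$, which is already controlled by Assumption \ref{as0}.
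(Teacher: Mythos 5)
Your proposal is correct and follows essentially the same route as the paper's proof: integral form of Equation (\ref{0}), Cauchy--Schwarz on the drift term, Doob's maximal inequality on the stochastic integral, the linear-growth bound from Assumption \ref{as0}, and Gronwall's inequality to obtain (\ref{1}). The only differences are that the paper additionally writes out the Picard contraction argument in the weighted norm $\|\xi\|_a^2=\E\int_0^t e^{-as}|\xi_s|^2\,ds$ to establish membership in $\mathbb F^2$ (a step you delegate to the standard existence--uniqueness theorem), while your stopping-time localization supplies the a priori finiteness needed to invoke Gronwall, which the paper leaves implicit.
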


\begin{proof}
	See in the Appendix.
\end{proof}

The foundation of pandemic model of our paper is stochastic Susceptibility-Infection-recovery (SIR) structure. Following \cite{acemoglu2020}, new infections are proportional to the number are proportional to the number of susceptible (S) and infected people (I)  of the initial population or $\beta SI$. Furthermore,  I assume that this infection rate $\be$ is subject to a random shocks \citep{lesniewski2020}, therefore, 
\begin{equation}\label{1.1}
d\be^k(s)=\left[\be_1^k+\be_2^kM\left\{e(s)^\theta+\frac{\kappa_0[z_k(s)]^\gamma}{\kappa_1p(\eta_{k_i},s)}\left(1-e_k(s)^\theta\right)\right\}\right]ds+\sigma_1^k(e_k(s),z_k(s))M dB_1^k(s),
\end{equation}
 where $\theta>1$ to make the function $\be^k(e_k,z_k)$ a convex function of $e_k$ (i.e., $\partial \beta^k/\partial e_k>0$ and $\partial^2 \beta^k/\partial e_k^2>0$), $\be_1^k,\be_2^k>0$ such that $\partial \beta/\partial z_k>0$, $\be_1$ is the minimum level of infection risk produced if only the essential activities are open, $\gamma\in(0,1)$ is the parameter which determines the degree of effectiveness of fatigue to spread infection, $M$ is fine particulate matter ($PM_{2.5}>12 \mu g/m^3$) which is an air pollutant and have significant contribution to degrade a person's health,  $\sigma_1^k(e_k(s),z_k(s))$ is a known diffusion coefficient infection dynamics and $dB_1^k(s)$ is one dimensional standard Brownian motion of $\be(e_k,z_k)$.  Therefore, in lack of presence of lockdowns and isolations, the new infection rate of group $k$ is \[
 S_k\frac{\sum_l\be^{kl}(s)I_l(s)}{\left[\sum_l \be^l(s)\left(S_l(s)+I_l(s)+R_l(s)\right)\right]^{2-\a}},
 \]
where $\be^{kl}$ are parameters which control infection rate between two infection groups $k$ and $l$ and, $\a\in[1,2]$ allows to control the returns of the scale matching \citep{acemoglu2020}. For steady state values $S_k^*$, $I_k^*$ and $R_k^*$ \citep{rao2014}, the risk-group $k$ has the SIR state dynamics as
\begin{align}\label{4}
dS_k(s) & = \biggr\{\eta N_k(s)-\be^k(e_k(s),z_k(s))\frac{S_k(s)I_k(s)}{\left[1+rI_k(s)\right]+\eta N_k(s)}-\tau S_k(s)\notag\\&\hspace{1cm}+\zeta R_k(s)\biggr\}ds+\sigma_2^k\left[S_k(s)-S_k^*\right]dB_2^k(s),\notag\\ dI_k(s)&=\left\{\be^k(e_k(s),z_k(s))\frac{S_k(s)I_k(s)}{\left[1+rI_k(s)\right]+\eta N_k(s)}-(\mu+\tau)I_k(s)\right\}ds\notag\\ &\hspace{1cm}+\sigma_3^k\left[I_k(s)-I_k^*\right]dB_3^k(s),\notag\\ dR_k(s) &=\left\{\mu I_k(s)-[\tau +\zeta]e_k(s) R_k(s)\right\}ds+\sigma_4^k\left[R_k(s)-R_k^*\right]dB_4^k(s),
\end{align}
where $\eta$ is birth rate, $1/\left[1+rI(s)\right]$ is a measure of inhibition effect from behavioral change of a susceptible individual in group $k$, $\tau$ is the natural death rate, $\zeta$ is the rate at which recovered person loses immunity and returns to the susceptible class and $\mu$ is the natural recovery rate of the infected individuals in risk-group $k$. $\sigma_2^k$, $\sigma_3^k$ and $\sigma_4^k$ are assumed to be real constants and are defined as the intensity of stochastic environment and, $B_2^k(s)$, $B_3^k(s)$ and $B_4^k(s)$ are standard one-dimensional Brownian motions \citep{rao2014}. It is important to note that in the dynamic systems (\ref{4}) is a very general case of SIR model.

For a complete  probability space  $(\Omega,\mathcal F,\mathbb P)$ with filtration starting from\\ $\{\mathcal F_s\}_{0\leq s\leq t}$, satisfying Assumptions \ref{as0} and \ref{as1}. Let 
\begin{equation*}
\mathbf Z_k(s)=\left[z_k(s),S_k(s),I_k(s),R_k(S)\right]\triangleq [h_1(s),h_2(s),h_3(s),h_4(s)],
\end{equation*}
where the norm $|\mathbf Z_k(s)|=\sqrt{z_k^2(s)+S_k^2(s)+I_k^2(s)+R_k^2(s)}$. Suppose, $C^{2,1}(\mathbb R^4\times(0,\infty),\mathbb R_+)$ be a family of all nonnegative functions $\mathfrak W(s,\mathbf Z_k)$ defined on $\mathbb R^4\times (0,\infty)$ so that they are continuously twicely differentiable in $\mathbf Z_k$ and once in $s$. Consider a differential operator $\mathbf D$ associated with 4-dimensional stochastic differential equation for risk-group $k$
\begin{equation}
d\mathbf Z_k(s)=\bm \mu_k(s,\mathbf Z_k)ds+\bm\sigma_k(s,\mathbf Z_k) d\mathbf B(s),
\end{equation}
such that
\begin{equation*}
\mathbf D=\frac{\partial}{\partial s}+\sum_{j=1}^4\mu_{k_j}(s,\mathbf Z_k)\frac{\partial}{\partial Z_{k_j}}+\frac{1}{2}\sum_{j=1}^4\sum_{j'=1}^4\left[\left[\bm\sigma_k^T(s,\mathbf Z_k)\bm\sigma_k(s,\mathbf Z_k)\right]_{jj'}\frac{\partial^2}{\partial Z_{k_j}Z_{k_{j'}}}\right],
\end{equation*}
where
\[\bm\mu_k=\begin{bmatrix}
\kappa_0(1-e_k)-\kappa_1z_kp(\eta_{k_i})\\
\eta N_k-\be^k(e_k,z_k)\frac{S_kI_k}{(1+rI_k)+\eta N_k}-\tau S_k+\zeta R_k\\
\be^k(e_k,z_k)\frac{S_kI_k}{\left[1+rI_k\right]+\eta N_k}-(\mu+\tau)I_k\\
\mu I_k-(\tau +\zeta)e_k R_k
\end{bmatrix}\]
and,
\[\bm\sigma_k=
\begin{bmatrix}
\sigma_0^k(z_k-z_k^*) & 0 &0 &0\\
0& \sigma_2^k(S_k-S_k^*) & 0&0\\
0& 0& \sigma_3^k(I_k-I_k^*)& 0\\
0& 0& 0&\sigma_4^k(R_k-R_k^*)
\end{bmatrix}.\]
Now let $\mathbf D$ acts on function $\mathfrak W\in C^{2,1}(\mathbb R^4\times(0,\infty);\mathbb R_+)$, such that
\begin{equation*}
\mathbf{D}\mathfrak W(s,\mathbf Z_k)=\frac{\partial}{\partial s}\mathfrak W(s,\mathbf Z_k)	+\frac{\partial}{\partial\mathbf Z_k}\mathfrak W(s,\mathbf Z_k)+\frac{1}{2}\text{$trace$}\left\{\bm\sigma_k^T(s,\mathbf Z_k)\left[\frac{\partial^2}{\partial Z_k^T \partial Z_k} \mathfrak W(s,\mathbf Z_k)\right]\bm\sigma_k(s,\mathbf Z_k)\right\},	
\end{equation*}	
where T represents a transposition of a matrix.

\begin{prop}\label{p0}
For any given set of initial values of risk-group $k$, $\{z_k(0),S_k(0),I_k(0),R_k(0)\}\in\mathbb R^4$ with Assumptions \ref{as0} and \ref{as1} there exists a unique solution $\{z_k(s),S_k(s),I_k(s),R_k(s)\}$ on $s\in[0,t]$ and will remain in $\mathbb R^4$ under incomplete and perfect information, where $B^k=B_0^k=B_2^k=B_3^k=B_4^k$.
\end{prop}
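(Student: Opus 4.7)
The plan is to reduce the statement to the standard local existence / uniqueness theorem for It\^o SDEs with locally Lipschitz coefficients and then extend the local solution to the whole interval $[0,t]$ by a Lyapunov / stopping-time argument built on the operator $\mathbf D$ just defined. First, I would verify that on every compact $K\subset\mathbb R^4$ both $\bm\mu_k$ and $\bm\sigma_k$ are Lipschitz: Assumption \ref{as0} supplies this for the fatigue drift $\kappa_0(1-e_k)-\kappa_1 z_k p$ and for $\sigma_0^k$; the three remaining diffusion entries $\sigma_j^k(h_j-h_j^*)$, $j=2,3,4$, are affine; and the remaining drift components are rational functions in $\mathbf Z_k$ whose denominator $(1+rI_k)+\eta N_k$ stays bounded away from zero on any $K$ contained in the half-space $\{1+rI_k+\eta N_k>0\}$. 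The classical local theorem (e.g.\ from \cite{carmona2016}) then produces, for any $\mathcal F_0$-measurable initial datum $\mathbf Z_k(0)\in\mathbb R^4$, a unique continuous $\{\mathcal F_s\}$-adapted solution up to an explosion time $\tau_\infty=\lim_{n\to\infty}\tau_n$, with $\tau_n=\inf\{s\geq 0:|\mathbf Z_k(s)|\geq n\}$.

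The heart of the argument is then to show $\tau_\infty>t$ almost surely. I would pick the Lyapunov candidate $\mathfrak W(s,\mathbf Z_k)=1+|\mathbf Z_k|^2\in C^{2,1}(\mathbb R^4\times(0,\infty);\mathbb R_+)$ and evaluate $\mathbf D\mathfrak W$: the linear drift/decay pieces contribute $O(|\mathbf Z_k|^2)$ directly; the trace of $\bm\sigma_k^T\bm\sigma_k$ is quadratic by Assumption \ref{as0} and the affine structure of $\sigma_2^k,\sigma_3^k,\sigma_4^k$; and the only nonlinearity $\beta^k(e_k,z_k)S_kI_k/[(1+rI_k)+\eta N_k]$ is dominated by $(|\beta^k|/r)|S_k|$, which is itself $O(1+|\mathbf Z_k|^2)$ once $\beta^k$ is controlled via the sublinear $|z_k|^\gamma$ dependence in (\ref{1.1}) and the $L^2$ estimate of Lemma \ref{l0}. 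Combining these pieces yields a constant $c_1>0$ with
\[
\mathbf D\mathfrak W(s,\mathbf Z_k)\leq c_1\,\mathfrak W(s,\mathbf Z_k),\qquad s\in[0,t].
\]
Applying It\^o's formula to $e^{-c_1 s}\mathfrak W(s,\mathbf Z_k(s))$, stopping at $\tau_n\wedge t$ and taking expectations eliminates the martingale part and gives $\E\mathfrak W(\tau_n\wedge t,\mathbf Z_k(\tau_n\wedge t))\leq e^{c_1 t}\mathfrak W(0,\mathbf Z_k(0))$. Since $|\mathbf Z_k(\tau_n)|=n$ on $\{\tau_n\leq t\}$, Chebyshev's inequality yields $\P(\tau_n\leq t)\leq e^{c_1 t}\mathfrak W(0,\mathbf Z_k(0))/(1+n^2)\to 0$, so $\tau_\infty>t$ a.s.; pathwise uniqueness on each $[0,\tau_n]$ is inherited from local Lipschitz and patches together into the claimed global unique solution. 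The common-noise reduction $B^k=B_0^k=B_2^k=B_3^k=B_4^k$ stated in the hypothesis only inserts cross terms $\sigma_i^k\sigma_j^k(h_i-h_i^*)(h_j-h_j^*)$ into the trace part, which remain $O(|\mathbf Z_k|^2)$ by Cauchy--Schwarz, so the estimate is unaffected.

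The main obstacle I expect is controlling the saturated infection term $\beta^k S_kI_k/[(1+rI_k)+\eta N_k]$: without the $rI_k$ factor in the denominator the product $S_kI_k$ would produce cubic growth in $\mathbf Z_k$ that no quadratic Lyapunov function could absorb, and the damping to quadratic growth holds only on the half-space $\{1+rI_k+\eta N_k>0\}$. The argument must therefore be coupled with an invariance check for that half-space (and, more generally, for the biologically meaningful cone $\mathbb R^4_+$), which I would handle by a secondary Lyapunov argument using $-\log S_k-\log I_k-\log R_k$ together with the bound on $z_k$ supplied by Lemma \ref{l0}; this forward invariance is precisely what the phrase ``will remain in $\mathbb R^4$'' in the statement of Proposition \ref{p0} encodes.
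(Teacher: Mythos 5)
Your overall strategy --- local existence up to an explosion time followed by a Lyapunov/stopping-time non-explosion test --- is the same skeleton the paper uses, but the two proofs diverge in the choice of Lyapunov function and in how positivity is handled, and this difference is where your version develops a gap. The paper takes the single function $\mathfrak W(z_k,S_k,I_k,R_k)=\sum_i[x_i-1-\ln x_i]$, which diverges both as $x_i\downarrow 0$ and as $x_i\uparrow\infty$; it stops the process at the first exit from $(1/m,m)^4$, bounds $d\mathfrak W\leq \mathfrak M\,ds+(\cdots)dB^k$ by a constant $\mathfrak M$ (Inequality (\ref{11})), and derives a contradiction from $\varepsilon\min\{m-1-\ln m,\,1/m-1+\ln m\}\leq \mathfrak M t+\mathfrak W(0)$ as $m\to\infty$. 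One stopping time and one function thus deliver non-explosion and forward invariance of the positive region simultaneously. You instead use $1+|\mathbf Z_k|^2$ with the exponential-discount/Chebyshev variant of the Khasminskii test for non-explosion, and defer positivity to a ``secondary'' argument with $-\log S_k-\log I_k-\log R_k$. The Chebyshev route is cleaner than the paper's contradiction and avoids the paper's rather optimistic constant bound $\mathfrak M$ (a bound of the form $\mathbf D\mathfrak W\leq c(1+\mathfrak W)$, as you propose, is what one can actually justify), so on that axis your version is arguably more careful.

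The gap is one of ordering. Your estimate $\mathbf D\mathfrak W\leq c_1\mathfrak W$ for the quadratic $\mathfrak W$ relies on dominating the incidence term by $(|\beta^k|/r)|S_k|$, and that saturation bound $I_k/[(1+rI_k)+\eta N_k]\leq 1/r$ is only valid on the positive cone (more precisely where $1+rI_k+\eta N_k$ is bounded below); on all of $\mathbb R^4$ the term is genuinely cubic and no quadratic Lyapunov function absorbs it. So ``first prove non-explosion, then prove invariance'' cannot be executed in that order: the non-explosion estimate presupposes the invariance you postpone. You flag this yourself in the last paragraph, but the fix is not a \emph{secondary} argument --- it is to run a \emph{single} stopping time that exits either near the boundary of the cone or near infinity, with a Lyapunov function that blows up at both, e.g.\ your $1+|\mathbf Z_k|^2-\sum_i\log x_i$ or the paper's $\sum_i[x_i-1-\ln x_i]$. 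Once you make that replacement your argument closes and is essentially the paper's proof with a tidier conclusion step. A second, smaller point: the control of $\beta^k$ via Equation (\ref{1.1}) and Lemma \ref{l0} should also be folded into the same stopped estimate rather than imported as a separate a priori $L^2$ bound, since $\beta^k$ is itself one of the coupled diffusions.
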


\begin{proof}
See in the Appendix.
\end{proof}

For theoretical purpose I rewrite theses equations as 
\begin{eqnarray}\label{5}
dS_k(s)&=& \mu_1(s,e_k,z_k,S_k,I_k,R_k)ds+\sigma_5^k(S_k)dB_2^k\notag,\\dI_k(s)&=& \mu_2(s,e_k,z_k,S_k,I_k)ds+\sigma_6^k(I_k)dB_3^k\notag,\\dR_k(s)&=& \mu_3(s,e_k,I_k,R_k)ds+\sigma_7^k(R_k)dB_4^k.
\end{eqnarray}
Furthermore, it is assumed to be the System (\ref{5}) follows Assumptions \ref{as0} and \ref{as1}.

\subsection{Opinion Dynamics of a risk-group $k$ towards vaccination against COVID-19}

This section will discuss about the spread of $k^{th}$ risk-group's opinion towards vaccination against  COVID-19 in the society. In the previous section I assume each risk-group is constructed such a way that each agent in that group has homogeneous opinions. Heterogeneous opinions need to be addressed by a multi-layer social-network which would be an interesting topic for future research and currently is beyond the scope of this paper. As there are $N_k$ agents in each of the $K$ risk-groups therefore, total population is $KN_k=N<\infty$. I assume that all risk-groups are connected to each other via an exogenous, directed network represented by graph $\mathcal G\subseteq N\times N$ which also represents how one risk-group spreads its beliefs about vaccination against COVID-19 to other risk-groups. For example, If risk-group $k$ gives its opinion to risk-group $l$, then I write $k\ra l$ or $(k,l)\in\mathcal G$. Furthermore, if risk-group $l$ gets different opinion about COVID-19 vaccination from risk-group $k$ more often then, $k$ and $l$ are \emph{group-neighbors} $\mathcal N_k(\mathcal G)$ \citep{board2021}. As COVID-19 is known less than two years to us, people have incomplete information about this pandemic and this leads to an incomplete information about the social network under COVID-19. This information is captured by finite signals $\chi_k\in X_k$ and a joint prior distributions over networks and signal profiles $\varrho(\mathcal G,\chi_k)$ \citep{board2021}. Now a random network $G=(N,X,\varrho)$. Consider following four cases:
\begin{itemize}
	\item Deterministic social network $\mathcal G$. Following \cite{board2021} signal spaces about the opinion of COVID-19 are assumed to be degenerate, $|X_k|=1$, and the prior $\varrho$ assigns probability $1$ to $\mathcal G$. Although complete information eases the situation, this is rare in current COVID-19 situation. As this pandemic is new, even policy makers do not have complete information. For example, at the middle of 2021 policymakers (such as Centers for Disease Control and Prevention (CDC)) announced that fully vaccinated people are completely safe against this pandemic. Now because of Omicron variant above $350,000$ people are infected daily by January 2022. As a result, people lose trust on policy-makers and make their opinions based on their beliefs and faiths. This makes the learning dynamics about COVID-19 extremely complicated. This motivates to study random opinion network about pandemic with incomplete information.
	\item Directed opinion network with finite types $\gamma\in \Gamma$ where, for a individual risk-group $k$, first I independently draw a finite type $\gamma\in\Gamma$ assuming any distribution with full support. After choosing $k^{th}$ risk-group's opinion types $\gamma$ against COVID-19 vaccination that risk-group randomly stubs each type $\gamma'$. Then during communication, type $\gamma'$ randomly stubs to type $\gamma'$ individual risk-groups. Now the individual risk-group knows total number of outlinks of each type in the sense that, what are their group-neighbor's stand towards COVID-19 vaccination. The outlink at time $s$ is denoted  as a vector $d=(s,d_{\gamma'})_{\gamma'}\in\mathbb N^{\gamma'}$ which is also realization of more generalized random vector $\mathcal D_\gamma=(s,\mathcal D_{\gamma,\gamma'})_{\gamma'}$ with expectation at time $s$ is $\E_s[\mathcal D_{\gamma,\gamma'}]$ where $\mathcal D=(s,\mathcal D_{\gamma,\gamma'})_{\gamma,\gamma'}$ is a \emph{time dependent or dynamic degree distribution}.
	\item Indirected opinion spread network with binary links and triangles. Following \cite{board2021} $k^{th}$ individual risk-group's spreading their opinions about vaccination against COVID-19 might have $\hat d$ binary stubs and $\tilde d$ pairs of triangles. 
	\begin{figure}[H]
		\centering
		\begin{subfigure}{.9\textwidth}
			\centering
			\includegraphics[width=.7\linewidth]{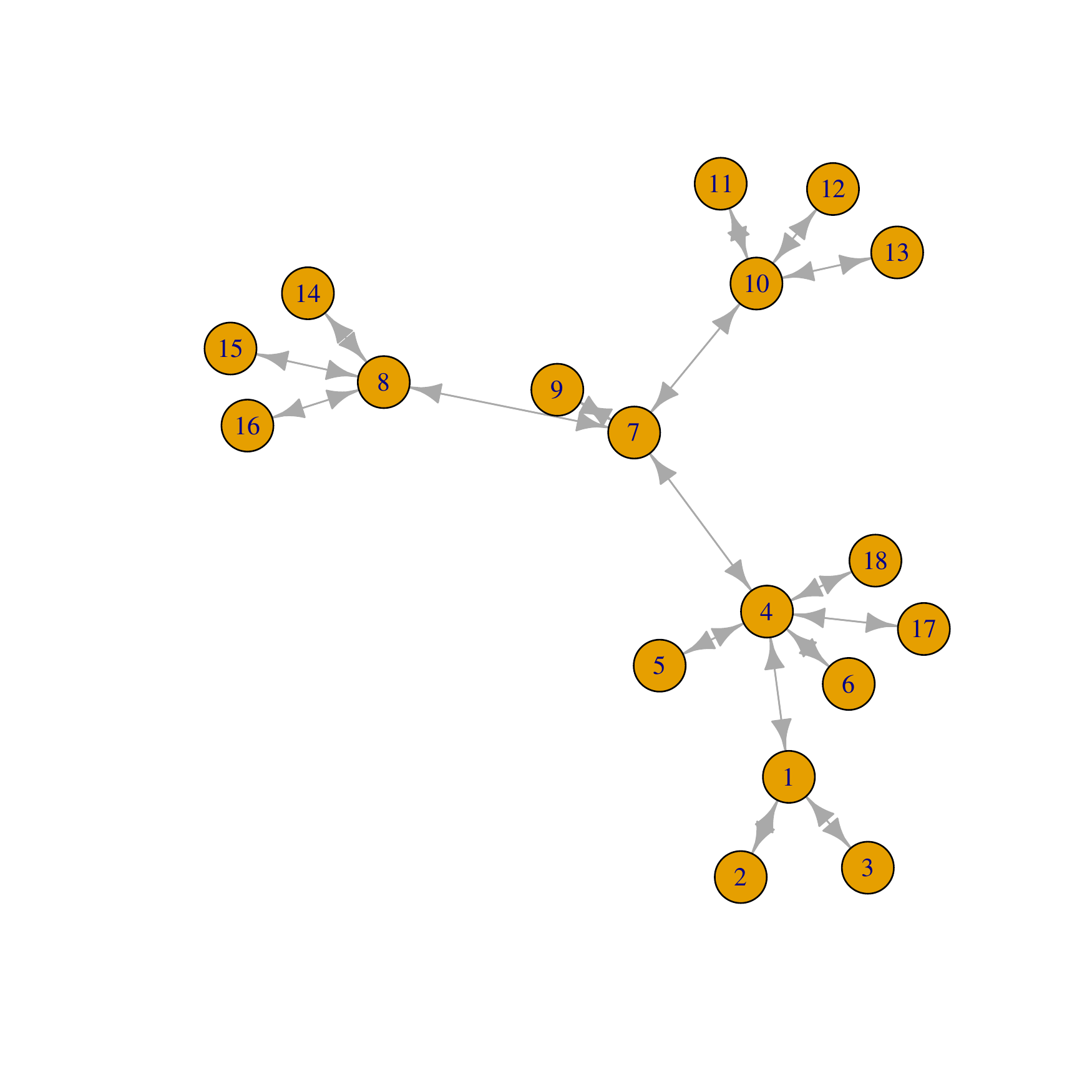}
			\caption{Binary stub where $\mathcal D$=1 for individual risk-groups \{2,3,5,6,9,11,12,13,14,15,16,17,18\}, $\mathcal D$=3 for individual risk-group 1, $\mathcal D$=4 for individual risk-groups \{7,8,10\} and $\mathcal D$=6 for individual risk-group 4.}
		\end{subfigure}
		\begin{subfigure}{.9\textwidth}
			\centering
			\includegraphics[width=.7\linewidth]{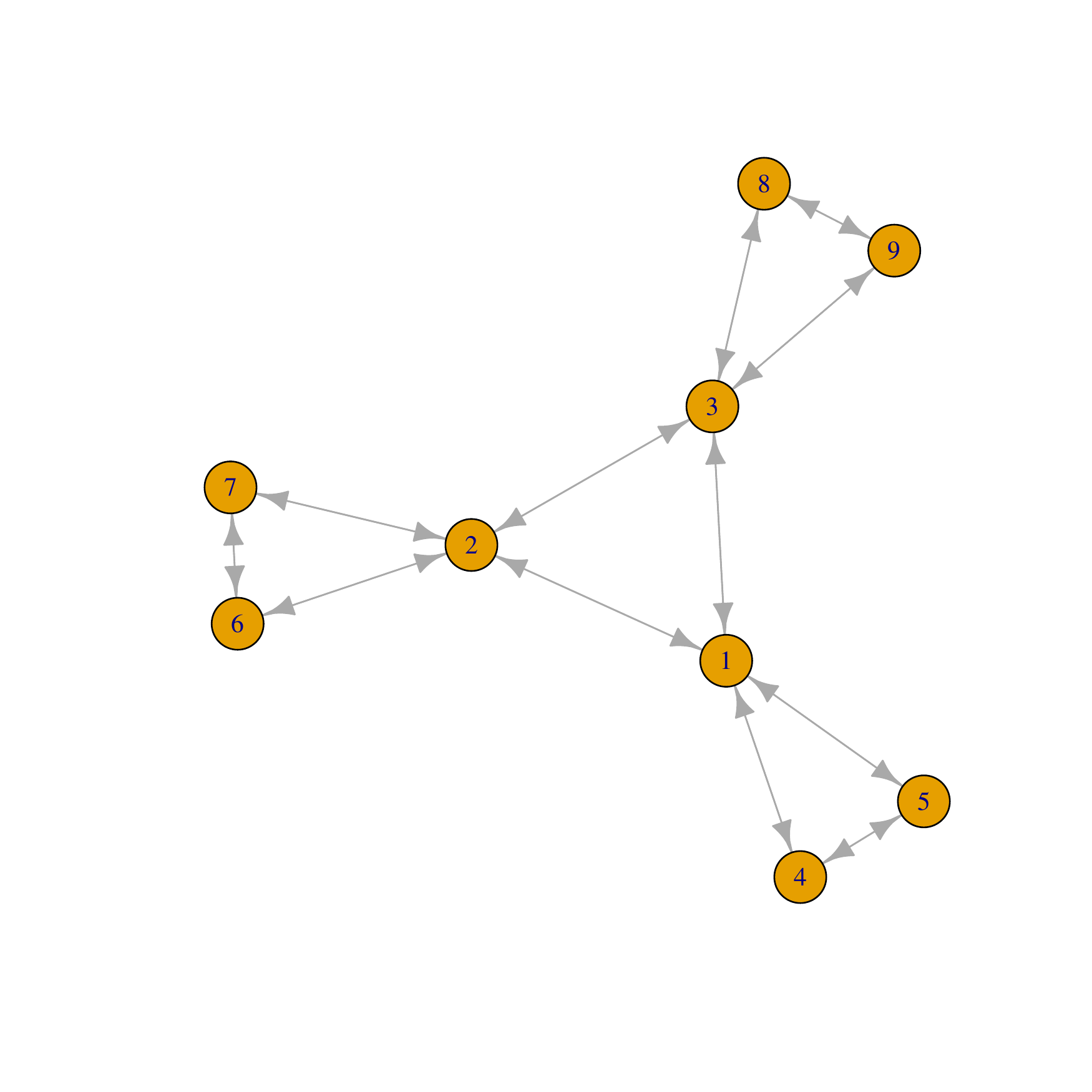}
			\caption{Here every individual risk-group has D-triangular stubs.}
		\end{subfigure}
		\caption{Two networks of individual risk-group $k$ such as binary and triangular stubs at time $s\in[0,t]$. }
		\label{fig:1}
	\end{figure}

From Figure \ref{fig:1} it is clear that $\hat d$ and $\tilde d$ are the subset of the above graph. For example, if we consider individual risk-group 1, then from the first panel it has $\hat d=3$ and in the second panel the same risk-group has two triangular stubs. We further assume, every individual risk-group knows their total number of binary and triangular stabs. In the world of COVID opinion spreading, if one individual risk-group shares their opinions to another risk-group very close to it then, the network connection might be triangular. On the other hand if individual risk-group $k$ spreads its opinion to some stranger (i.e., another risk-group far from risk-group $k$'s opinions), it would be one time binary information transition.
\item Microscopic interaction among risk-groups. A kinetic model for opinion spread towards vaccination against COVID-19 \citep{cordier2005,toscani2006}. Let $\omega_k$ denotes opinion of individual risk-group $k$ and it varies continuously between $-1$ and $1$. Here $-1$ represents an individual risk-group $k$'s extremely negative opinion for getting vaccinated against COVID-19 where as $1$ stands for completely opposite extreme opinion for COVID-19 vaccination. Following \cite {toscani2006} I assume that  directed and indirected interactions cannot destroy the bounds, which corresponds to imply that extreme opinions cannot be crossed. 
\end{itemize}

At the beginning of the interaction risk-group $k$ seeks to learn about the severity of COVID-19 with its own belief $v_k\in\{L,H,\omega_k\}=\{0,1,[-1,1]\}$, where L stands for low severidy of the disease and H stands for high severity. At $s=0$ and for a fixed belief against getting vaccinated, all the risk-groups share a common prior $Pr(v=H|\omega_k)=p_0\in(0,1)$, independent of network $\mathcal G$ and signals $X_k$. As the pandemic spreads, individual risk-group $k$ develops the need of information about the disease and starts interacting at time $s_k\sim U[0,1]$ (the uniform distribution  where $s_k$ is time-quantile during the presence of the pandemic). Based on the handling of the pandemic of the group-neighbors risk-group $k$ updates its probabilities of beliefs about pandemic to $Pr(\omega_k^*)=p_k^*$, such that $ Pr(\omega_k^*=-1)=0$ and $Pr(\omega_k^*=1)=1$. In order to get information, risk-group $k$ incurs some cost $c_k\sim F[\underline{c},\bar c]$, where F is the distribution function with bounded density function f. risk-group $k$ only gets exposure to the pandemic iff $v_k=\{L,\omega_k\}$. If individual risk-group $k$ does care about the severity of the disease, it interacts with other risk-groups frequently and transmits COVID-19. Interaction times $s_k$ and the cost of disease information $c_k$ are private information, independent within individual risk-groups in $S_k, I_k$ and $R_k$. 
 
If individual risk-group $k$ finds $v_k=\{L,\omega_k\}$ and does not mind to interact with other risk-groups, its utility becomes 1. If risk-group $k$ finds $v_k=\{H,\omega_k\}$ then, it is reluctant to interact with other risk-groups. In this case there are two possibilities, if unknowingly risk-group $k$ gets infected by the virus, its utility becomes $0$ and furthermore, if individual risk-group $k$ gets infected knowingly, its utility goes down to $-U$. Finally, if risk-group $k$ sees its group-neighbor gets infected by the virus but asymptotic, its posterior is $p_k=1$ and does not mind to interact. If risk-group $k$ gets infected by COVID-19 unknowingly, the posterior becomes $p_k\leq p_0$. Assume $U\geq p_0/(1-p_0)$, which leads to an adoption to the pandemic is a dominated strategy. Furthermore, if $(p_k-c_k)\geq 0$, then individual risk-group $k$ does not mind to interact with other risk-groups which might lead to get transmitted with the disease. On the other hand, if $(p_k-c_k)< 0$, then individual risk-group $k$ finds $v_k-\{H,\omega_k\}$ and tries to isolate from other risk-groups.

\begin{exm}
	Without loss of generality assume two independent risk-groups $k$ and $l$ who are interacted by a directed graph such that $k\ra l$. Before interaction, risk-group $k$ and $l$ have believes about COVID-19 vaccination as $\omega_k$ and $\omega_l$ respectively where $(\omega_k,\omega_l)\in\mathcal [-1,1]^2=\mathcal I^2$. Denote $Pr_s(L|\omega_k)$ as the probability of individual risk-group $k$'s willingness to contact with other risk-groups at time $s$ when it expects the severity of pandemic is less or L. Risk-group $k$ starts its communication at uniform time $s\in[0,t]$. As it is not rational for risk-group $k$ to interact with other risk-groups when $v_k$ is H, it is sufficient to keep track of the interaction probability conditional on $v_k=L$. Furthermore, as risk-group $k$ does not mind to interact as long as $c_k\leq p_0$ then $\partial [Pr_s(L|\omega_k)]/\partial k=Pr(k\ \text{ is indifferent to interact}\ |\omega_k)=F(p_0)$, which is independent of time. Furthermore, the interaction of opinions among risk-groups $k$ and $l$  follow the stochastic dynamic systems represented by
	\begin{equation*}
	d\omega_k(s)=\left\{\omega_k(s)-\varkappa e_k(s) \mathcal Q(|\omega_k(s)|)\left[\omega_k(s)-\omega_l(s)\right]\right\}ds+\sigma_8^ke_k(s)\left[\omega_k(s)-\omega_l(s)\right]dB_5^k(s),
	\end{equation*}
	
\begin{equation*}
d\omega_l(s)=\left\{\omega_l(s)-\varkappa e_l(s) \mathcal Q(|\omega_l(s)|)\left[\omega_l(s)-\omega_k(s)\right]\right\}ds+\sigma_9^le_l(s)\left[\omega_l(s)-\omega_k(s)\right]dB_6^l(s), 
\end{equation*} 
where $\varkappa\in(0,1/2)$ is the compromise propensity, the function $\mathcal Q(.)\in [0,1]$ with $\partial \mathcal Q/\partial \omega_k\leq 0$ represents the local relevance of compromise \citep{toscani2006}. It is important to know  that, if $e_k(s)\downarrow 0$ then there is a huge unemployment in the economy which means the incidence of pandemic is very severe. Under this case a difference in opinion $(\omega_k-\omega_l)$ does not affect the dynamic system and every risk-group needs to follow the policymakers' protocols. On the other hand, if $e_k(s)\downarrow 1$ then, opinion difference takes a major role to explain  the above stochastic opinion dynamical systems. Finally $\sigma_8^k(s)$ and $\sigma_9^i(s)$ are the opinion diffusion coefficients with $B_5^k(s)$ and $B_6^l(s)$ as their corresponding Browninan motions.

As risk-group $k$ interacted first, as a second mover individual risk-group $l$ learns about the effect of pandemic from risk-group k. Furthermore, if risk-group $l$ notices that, risk-group $k$ does not mind interacting with other risk-groups, then k thinks the disease is not fatal and is not reluctant to interact with others and, vice versa. Therefore, individual risk-group $l$'s posterior probability that COVID-19 is not severe is 
\[
p\left[Pr_s(L|\omega_k(s))\right]=\frac{\left\{1-\left[Pr_s(L|\omega_k(s))\right]\right\}p_0}{\left[1-Pr_s(L|\omega_k(s))\right] p_0+(1-p_0)}.
\]
Individual risk-group $l$ does not mind to interact with other risk-groups if $c_l\leq p\left[Pr_s(L|\omega_k(s))\right]$. As $Pr_s(L|\omega_k(s))$ changes based on the infection rate of the community, individual risk-group $l$'s optimistic approach to do social contact continues but the pessimistic approach kicks in only if $Pr_s(L|\omega_k(s))$ is starting to decrease. Therefore, individual risk-group $l$'s tolerance rate is 
\begin{eqnarray*}
\frac{\partial [Pr_s(L|\omega_l)]}{\partial l}&=&1-Pr(l \text{ is reluctant to interact}\ |\omega_k)\\&=& 1- Pr(k \text{ is reluctant to interact}\ |\omega_k)\\& &\times Pr(l \text{ is indifferent to interact}|k \text{ is reluctant to interact} ,\omega_k)\\ &=& 1-\left[1-Pr_s(L|\omega_k(s))\right]\left[1-F\left(p\left[Pr_s(L|\omega_k(s))\right]\right)\right]\\&=:&\hat\Phi\left[Pr_s(L|\omega_k(s))\right].
\end{eqnarray*}
By denoting $Pr_s(L|\omega_k(s))=W_k(s)$ and considering the stochastic opinion dynamics I define a stochastic differential equation
\begin{align}\label{6}
dW_k(s)=\mu_4\left[\omega_k(s)-\varkappa e_k(s) \mathcal Q(|\omega_k(s)|)\left[\omega_k(s)-\omega_l(s)\right]\right]ds\notag\\+\sigma_{10}^k[e_k(s)\left[\omega_k(s)-\omega_l(s)\right]dB_7^k(s)].
\end{align}
Without loss of generality the Equation (\ref{6}) becomes,
\begin{align}\label{7}
dW_k(s)=\mu_4(s,e_k,\omega_k,\omega_l)ds+\sigma_{10}^k(s,e_k,\omega_k,\omega_l)dB_7^k(s),
\end{align}
all the symbols have their usual meanings.
\end{exm} 

Let $G=(N,X,\varrho)$ be a random network with signal profile $\varrho(\mathcal G,\chi_k)$. Like in the example above I assume individual risk-group $l$ does not mind interacting socially with probability $Pr_s(L|\omega_l)=W_l(s)$. As risk-group $l$ does not have any prior knowledge about COVID-19 transmission network, its decision strictly depends on the actions of other risk-groups' willingness to do so  in the community $\mathcal G$ with signals $\varrho$. Let $W_{l,\mathcal G,\varrho,\chi_l,\omega_l}(s)$ be a social interaction function for risk-group $l$ subject to $(\mathcal G,\chi_l,\omega_l)$ after expectation over other risk-groups' time of social interaction is $s_k$ with cost $c_k$. After taking expectation on $(\mathcal G,\chi_{-l},\omega_{-l})$, consider $$W_{l,\chi_{-l},\omega_{-l}}(s):=\sum_{\mathcal G,\chi_{-l},\omega_{-l}}\varrho(s,\mathcal G,\chi_{-l},\omega_{-l}|\chi_{l},\omega_{l})W_{l,\mathcal G,\chi_{-l},\omega_{-l}}(s)$$ 
be risk-group $l$'s interim social interaction function such that its signal is $\chi_l$ and its own opinions $\omega_l$. Risk-groups under Bayesian social network are willing to do social interaction if their group-neighbors are not reluctant to interact with others. Suppose, at least one of individual risk-group $l$'s neighbor has the interim social interaction function $$W_{l,\chi_{-l},\omega_{-l}}'(s):=\sum_{\mathcal G,\chi_{-l},\omega_{-l}}\varrho(s,\mathcal G,\chi_{-l},\omega_{-l}|\chi_{l},\omega_{l})W_{l,\mathcal G,\chi_{-l},\omega_{-l}}'(s),$$ such that $c_l\leq p_l$.
To get a proper expression of $W_{l,\mathcal G,\chi_{-l},\omega_{-l}}(s)$ assume individual risk-group $l$ first observes whether their group-neighbors are engaged in social interactions. If they interact then risk-group $l$ gets the information that the pandemic is not severe and makes $p_l=1$.  On the other hand, if risk-group $l$ finds out their neighbors are keeping social distancing then risk-group $l$ will try to get more information if their opinions against the COVID-19 vaccination are very strong such that $c_l\leq \bar c_{l,\chi_l,\omega_l}:=p_l$, where $\bar c_{l,\chi_l,\omega_l}$ is some arbitrary cut-off cost depending on $\omega_l$. If individual risk-group $l$ finds out that the transmission of the pandemic is very high, it will put $p_l=0$. Therefore,
 \begin{eqnarray}\label{8}
 \frac{dW_{l,\chi_{-l},\omega_{-l}}(s)}{dl}&=& 1-\left\{(1-W_{l,\mathcal G,\chi_{-l},\omega_{-l}}(s))(1-F(p(W_{l,\chi_{-l},\omega_{-l}}(s))))\right\}\notag\\&=:&\phi\left\{1-F(p(W_{l,\chi_{-l},\omega_{-l}}(s)))\right\}.
 \end{eqnarray}
 
 \begin{lem}\label{l1}
 	For individual risk-groups $k$ and $l$, the pair of social interaction functions $\left(W_{k,\chi_{-k},\omega_{-k}}(s),W_{l,\chi_{-l},\omega_{-l}}(s)\right)$ on space $F=(s,G,N,X,\varrho,\mathcal I)$ with conditional probabilities $Pr_s(H|\omega_k)=1$ and $Pr_s(H|\omega_l)=1$ in a same community. Then under non-intersecting graph $\mathcal G$, different opinions and for a function $h\in F$ we have total social interaction variation as
 	\begin{eqnarray*}
 	&  &\left|\left|\left(W_{k,\chi_{-k}\omega_{-k}}(s)-W_{l,\chi_{-l},\omega_{-l}}(s)\right)\right|\right|\\&=& \sup\left\{\left|\left(W_{k,\chi_{-k},\omega_{-k}}(s,h)-W_{l,\chi_{-l},\omega_{-l}}(s,h)\right)\right|\right\}\\ &=& 1-\sup_{\hat h\in \left(W_{k,\chi_{-k},\omega_{-k}}(s),W_{l,\chi_{-l},\omega_{-l}}(s)\right)} \hat h(F)\\ &=& 1-\inf\sum_{i=1}^I\left(W_{k,\chi_{-k},\omega_{-k}}(s,\mathcal G_i)\wedge W_{l,\chi_{-l},\omega_{-l}}(s,\mathcal G_i)\right),
 	\end{eqnarray*}
 where the infimum is taken over all finite resolutions of F into pairs of nonintersecting subgraphs $\mathcal G_i$ with $I>1$.
 \end{lem}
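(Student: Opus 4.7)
The plan is to treat $W_{k,\chi_{-k},\omega_{-k}}(s,\cdot)$ and $W_{l,\chi_{-l},\omega_{-l}}(s,\cdot)$ as probability set functions on the measurable space $F=(s,G,N,X,\varrho,\mathcal I)$ (legitimate because, under the conditioning $Pr_s(H|\omega_k)=Pr_s(H|\omega_l)=1$ in a common community, the interim interaction functions are normalized by the conditional signal law $\varrho$ of Section 2.2), and then derive the three displayed representations as successive equivalent formulations of the total variation distance between them. Each equality is a classical fact; the work is to check that the non-intersecting subgraph decomposition of $F$ plays the role of a measurable partition and that the supremum over the pair $(W_{k,\cdot}(s),W_{l,\cdot}(s))$ is in fact a supremum over couplings with the correct marginals.

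First I would handle the identity
\[
\bigl\|W_{k,\chi_{-k},\omega_{-k}}(s)-W_{l,\chi_{-l},\omega_{-l}}(s)\bigr\| = \sup_{h}\bigl|W_{k,\chi_{-k},\omega_{-k}}(s,h)-W_{l,\chi_{-l},\omega_{-l}}(s,h)\bigr|
\]
by taking $h$ to range over measurable indicators on $F$ and invoking the standard variational definition of the TV norm. Next, writing $\nu_k=W_{k,\chi_{-k},\omega_{-k}}(s,\cdot)$ and $\nu_l=W_{l,\chi_{-l},\omega_{-l}}(s,\cdot)$, I would use the Hahn--Jordan decomposition of $\nu_k-\nu_l$ into a positive part on a set $F^+$ and a negative part on $F^-$; since both are probability set functions, $\|\nu_k-\nu_l\|=1-(\nu_k\wedge\nu_l)(F)$, where $\nu_k\wedge\nu_l$ denotes the common lower envelope supplied by the Hahn decomposition.

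To obtain the coupling expression $1-\sup_{\hat h\in(\nu_k,\nu_l)}\hat h(F)$, I would identify $\hat h$ with a joint set function whose marginals are $\nu_k$ and $\nu_l$, restricted to the ``agreement'' diagonal of $F$; the maximal diagonal mass is exactly the overlap $(\nu_k\wedge\nu_l)(F)$, which is the Strassen--Dobrushin coupling characterization specialized to our setting. For the partition formula, I would take any finite family of pairwise non-intersecting subgraphs $\{\mathcal G_i\}_{i=1}^I$ exhausting $F$ and observe that
\[
\sum_{i=1}^I\bigl(\nu_k(\mathcal G_i)\wedge\nu_l(\mathcal G_i)\bigr)\geq (\nu_k\wedge\nu_l)(F),
\]
with equality attained in the limit when the partition refines so as to separate $F^+$ from $F^-$. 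Taking the infimum therefore recovers $(\nu_k\wedge\nu_l)(F)$, and the lemma follows by stringing the three identifications together.

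The main obstacle, I expect, will be the final partition step: one must verify that the subgraph $\sigma$-algebra generated by the directed random network $G=(N,X,\varrho)$ is rich enough that $F^+$ and $F^-$ can be approximated arbitrarily well by finite non-intersecting unions of subgraphs $\mathcal G_i$, so that the infimum over partitions truly attains the Hahn overlap rather than an upper bound on it. This is a network-measurability issue, and I would address it by appealing to the signal-profile construction $\varrho(\mathcal G,\chi_k)$ which endows $F$ with a countably generated $\sigma$-algebra of subgraphs, guaranteeing the required approximation.
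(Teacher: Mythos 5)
Your proposal is correct and follows essentially the same route as the paper: the variational (indicator-function) characterization of the total variation norm, the Hahn--Jordan decomposition of $W_{k,\chi_{-k},\omega_{-k}}-W_{l,\chi_{-l},\omega_{-l}}$ into positive and negative parts supported on disjoint sets $F_+$ and $F_-$, and the identification of the overlap $1-\|\cdot\|$ with both the supremum over dominated measures and the infimum over partitions. Two small points of divergence. First, where you invoke the Strassen--Dobrushin coupling characterization and read $\hat h$ as the diagonal mass of a joint law, the paper works more directly: it exhibits the explicit set function $\hat h(\mathcal G)=W_{k,\chi_{-k},\omega_{-k}}(\mathcal G\cap F_-)+W_{l,\chi_{-l},\omega_{-l}}(\mathcal G\cap F_+)$, checks $\hat h(\mathcal G)\leq W_{k,\chi_{-k},\omega_{-k}}(\mathcal G)\wedge W_{l,\chi_{-l},\omega_{-l}}(\mathcal G)$, and computes $\hat h(F)$ by hand; both readings give the same overlap, so this is cosmetic. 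Second, the ``main obstacle'' you anticipate in the partition step does not actually arise: since $F_+$ and $F_-$ are themselves measurable and mutually exclusive, the two-element resolution $\{F_+,F_-\}$ is already admissible and attains the infimum exactly, because $W_{k,\chi_{-k},\omega_{-k}}(F_+)\wedge W_{l,\chi_{-l},\omega_{-l}}(F_+)=W_{l,\chi_{-l},\omega_{-l}}(F_+)$ and $W_{k,\chi_{-k},\omega_{-k}}(F_-)\wedge W_{l,\chi_{-l},\omega_{-l}}(F_-)=W_{k,\chi_{-k},\omega_{-k}}(F_-)$, whose sum is precisely $\hat h(F)$. No refinement limit or approximation of $F_\pm$ by subgraphs is needed, so the network-measurability discussion at the end of your proposal can be dropped.
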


\begin{proof}
	See in the Appendix.
\end{proof}
Above Lemma \ref{l1} implies that if social interaction function has bigger network (i.e. $\mathcal G$) then \[
\left|\left|\left(W_{k,\chi_{-k}\omega_{-k}}(s)-W_{l,\chi_{-l},\omega_{-l}}(s)\right)\right|\right|
\]
 will be small and vice versa. Therefore, if individual risk-group $l$ observes higher proportion of its neighbors are doing social interactions, they will do so. Furthermore, norm of social interaction is always less than unity. Therefore, the most extreme opinions against COVID-19 vaccination do not exist in this model.

Suppose, $\mathfrak q=\{\mathfrak q_k\}_{k\in K}$ represents the states of individual risk-group $k$, where $ \mathfrak q_k\in\{\bar q,\hat q,\tilde q\}$. If $\bar q=\emptyset$ then risk-group $k$ does not enter the COVID-19 network. If $\mathfrak q_k=\hat q$ then risk-group $k$ has entered the network but reluctant to do social interactions and finally, if $\mathfrak q_k=\tilde q$, then risk-group $k$ is in the network and is not maintaining social distance. Under the last case, $Pr_s(L|\omega_k)\approx 1$. Let $\Omega_E=\{0,1\}^E$ be the relevant finite sample space, containing configurations that allocate zeros and ones to the edge of $\mathcal G$, where $E=$edge of finite pandemic network $\mathcal G$ \citep{grimmett1995}. Consider $\delta\in \Omega_E$ the following condition holds,
\[
\delta(E)=\begin{cases}
1,\text{if edge E is open}\\ 0, \text{otherwise}.
\end{cases}
\]
The random cluster measure on COVID-19 social network $G$ with signal $\varrho$ and state profile $\mathfrak q$ is a probability measure at time $s\in [0,t]$
\begin{equation*}
\phi_{G,\varrho,\mathfrak q}(s,\delta)=\frac{1}{\Upsilon_{G,\varrho,\mathfrak q}}\left\{\prod_{E\in\mathbb E}\varrho^{\delta(E)}(1-\varrho)^{1-\delta(E)}\right\}\mathfrak q^{\jmath(\delta)},
\end{equation*}
where $\jmath(\delta)$ is the total number of open components of $\delta$, $\mathbb E$ is the space of all edges of the graph $\mathcal G$ and, $\Upsilon_{G,\varrho,\mathfrak q}$ is a normalizing factor (or,  partition function ) such that,
\[
\Upsilon_{G,\varrho,\mathfrak q}=\sum_{\delta\in \Omega_E}\left\{\prod_{E\in\mathbb E}\varrho^{\delta(E)}(1-\varrho)^{1-\delta(E)}\right\}\mathfrak q^{\jmath(\delta)}.
\]
A partial ordering under $\Omega_E$ given by $\delta\leq\delta'$ iff $\delta(E)\leq\delta', \ \forall E\in\mathbb E$. A function $\mho:\Omega_E\ra\mathbb E$ is called increasing if $\mho(\delta)\leq\mho(\delta'),\ \forall \ \delta\leq\delta'$. $\mathcal A$ is an increasing event if its simple function $\mathbbm 1_{\mathcal A}$ is increasing. Furthermore, if $\iota$ be a probability measure and $W_{k,\chi_{-k}\omega_{-k}}(s)$ be a random response function then, $\iota\left[W_{k,\chi_{-k}\omega_{-k}}(s)\right]$ is the conditional expectation of $W_{k,\chi_{-k}\omega_{-k}}(s)$ under $\iota$ \citep{grimmett1995}. In pandemic social network if $\mho$ and $W_{k,\chi_{-k}\omega_{-k}}(s)$ are increasing on the sample space $\Omega_E$, then 
\begin{equation*}
\phi_{G,\varrho,\mathfrak q}[\mho,W_{k,\chi_{-k}\omega_{-k}}(s)]\geq \phi_{G,\varrho,\mathfrak q}(\mho)\times \phi_{G,\varrho,\mathfrak q}[W_{k,\chi_{-k}\omega_{-k}}(s)].
\end{equation*}
Above inequality is called as Fortuin–Kasteleyn–Ginibre (FKG) inequality \citep{grimmett1995} of pandemic social network. Let $\mathbb Z^\gamma$ be a $\gamma$-dimensional hyperbolic Lattice such that risk-groups (i.e. vertices) $y_1$ and $y_2$ both are in it. For $E\subseteq\mathbb E$, $\mathcal F_E^k$ is the $\sigma$-field  such that $\mathcal F=\mathcal F_E^k$ \citep{grimmett1995}. $\Lambda\subseteq \mathbb Z^\gamma$ is a box such that,
\[
\Lambda=\prod_{\gamma=1}^\Gamma[y_1^\gamma,y_2^\gamma], 
\]
where $[y_1^\gamma,y_2^\gamma]$ is defined as $[y_1^\gamma,y_2^\gamma]\cap\mathbb Z$. The reason behind choosing a finite box $\Lambda$ inside  $\mathbb Z^\gamma$ is under the presence of COVID-19 risk-groups are not able to move across regions. Furthermore, moving around the globe is much harder because different countries have different restriction measures, which leads risk-groups to stay at home. As after certain point of time the COVID-19 infections go down, risk-groups would do social interactions locally. On the other hand, if a COVID-19 restriction stays too long, risk-groups would reluctant to stay at home. In this paper I am ruling out this  scenario. The box $\Lambda$ generates a sub-social network of lattice $\mathbb L$ with risk-group $k$ with $S_k,I_k$ and $R_k$ combined as set $\Lambda$ with the set of network connections $\mathbb E_\Lambda$. Define the $\sigma$-field at time s outside the network of $\Lambda$ as $\mathcal F_\Lambda=\mathcal F_{\mathbb E\setminus\mathbb E_\Lambda}$ and $\mathcal F=\cap_\Lambda\mathcal F_\Lambda$  as outside $\sigma$-field.

\begin{definition}\label{d0}
	A probability distribution $\phi$ on $G=(N,X,\varrho)$ with filtration $\mathcal F$ is called a random opinion cluster towards COVID-19 for three states $\mathfrak q$ and signal profiles $\varrho$ if
	\begin{equation*}
	\phi(\mathcal A|\mathcal F_\Lambda)=\phi_{\Lambda,\varrho,\mathfrak q}(\mathcal A), \ \mbox{$\phi$-a.s., for every $\mathcal A\in\mathcal F$ and boxes $\Lambda$.}
	\end{equation*}
	We denote this set as $\Re_{\varrho,\mathfrak q}$.
\end{definition}

\begin{defn}\label{d1}
	A probability distribution $\phi$ on $G=(N,X,\varrho)$ with filtration $\mathcal F$ is called a limit random opinion cluster towards COVID-19 for three states $\mathfrak q$ and signal profiles $\varrho$ if $\exists\  \xi\in\Omega$ and an increasing sequence of opinion boxes $\{\Lambda_n\}_{n\geq 1}$ such that
	\[
	\phi_{\Lambda_n,\varrho,\mathfrak q}^\xi\ra\phi,\ \mbox{as}\ n\ra\infty,
	\]
	where $\Lambda_n\ra\mathbb Z^\gamma$ as $n\ra\infty$ \citep{grimmett1995}.
\end{defn}

Furthermore, if the structure of network in a box $\Lambda$ is same (i.e. $\phi_{\Lambda,\varrho,\mathfrak q}^k=\phi_{\Lambda,\varrho,\mathfrak q}^l$ ) then for risk-groups k and l in the society are in $\mathbb R_{\varrho,\mathfrak q}$ and following \cite{grimmett1995}  $|\mathbb R_{\varrho,\mathfrak q}|=1$.

\begin{prop}\label{p1}
	Let for any random network $G$ with the signal profile $\varrho$ and for $\mathfrak q=\mathfrak q_3=\{\bar q,\hat q,\tilde q\}$ and social interactions of risk-group $k$ as $W_{k,\chi_{-k},\omega_{k}}$ exists and definitions \ref{d0} and \ref{d1} holds. Then there exists a unique random opinion distribution.
\end{prop}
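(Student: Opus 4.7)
The plan is to adapt the classical uniqueness argument for random-cluster measures from \cite{grimmett1995} to the pandemic-opinion setting of Definitions \ref{d0}--\ref{d1}. I would first dispose of existence via a weak-limit construction along the exhausting sequence of boxes $\Lambda_n\uparrow\mathbb Z^\gamma$ provided in Definition \ref{d1}: since $\Omega_E=\{0,1\}^E$ with the product topology is compact, the finite-volume measures $\phi_{\Lambda_n,\varrho,\mathfrak q}^\xi$ form a tight family, so Prokhorov's theorem yields a subsequential weak limit. Verifying that the DLR consistency condition $\phi(\mathcal A\mid\mathcal F_\Lambda)=\phi_{\Lambda,\varrho,\mathfrak q}(\mathcal A)$ passes to the limit---because the specification kernels defined through the partition function $\Upsilon_{G,\varrho,\mathfrak q}$ are local and continuous in the boundary configuration---produces a candidate element of $\Re_{\varrho,\mathfrak q}$.

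For uniqueness, I would exploit stochastic monotonicity in the boundary condition. Endow $\Omega_E$ with the pointwise partial order $\delta\le\delta'$ introduced just before Definition \ref{d0}, and consider the two extremal boundary configurations $\xi^{\min}\equiv\bar q$ (all external links closed) and $\xi^{\max}\equiv\tilde q$ (all external links open). The FKG inequality
\[
\phi_{G,\varrho,\mathfrak q}[\mho,W_{k,\chi_{-k}\omega_{-k}}(s)]\ge \phi_{G,\varrho,\mathfrak q}(\mho)\,\phi_{G,\varrho,\mathfrak q}[W_{k,\chi_{-k}\omega_{-k}}(s)]
\]
yields the standard domination $\phi_{\Lambda_n,\varrho,\mathfrak q}^{\xi^{\min}}\le_{\mathrm{st}}\phi_{\Lambda_n,\varrho,\mathfrak q}^{\xi}\le_{\mathrm{st}}\phi_{\Lambda_n,\varrho,\mathfrak q}^{\xi^{\max}}$ uniformly in $\xi$. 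Monotonicity of each extremal sequence in $n$, combined with boundedness, gives existence of the limits $\phi^{\min}:=\lim_n\phi_{\Lambda_n,\varrho,\mathfrak q}^{\xi^{\min}}$ and $\phi^{\max}:=\lim_n\phi_{\Lambda_n,\varrho,\mathfrak q}^{\xi^{\max}}$, and these sandwich every $\phi\in\Re_{\varrho,\mathfrak q}$ on every increasing cylinder event.

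The final step is to show $\phi^{\min}=\phi^{\max}$. Here Lemma \ref{l1} is the essential input: its bound $\|W_{k,\chi_{-k},\omega_{-k}}(s)-W_{l,\chi_{-l},\omega_{-l}}(s)\|<1$, together with the remark following Definition \ref{d1} that homogeneous interior structure in a box $\Lambda$ forces $\phi_{\Lambda,\varrho,\mathfrak q}^k=\phi_{\Lambda,\varrho,\mathfrak q}^l$, implies that the influence of the boundary on any fixed cylinder event $\mathcal A\in\mathcal F_{\Lambda_m}$ decays as $n\to\infty$, so $\phi^{\min}$ and $\phi^{\max}$ agree on $\mathcal A$. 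Since increasing cylinder events generate the product $\sigma$-field $\mathcal F=\cap_\Lambda\mathcal F_\Lambda$ and the sandwich bound pins every $\phi\in\Re_{\varrho,\mathfrak q}$ between the two coinciding extremes, uniqueness follows. The main obstacle I anticipate is making the boundary-decay step quantitative: turning the strict but qualitative bound of Lemma \ref{l1} into a genuine mixing estimate on $\phi_{\Lambda_n,\varrho,\mathfrak q}^{\xi^{\max}}-\phi_{\Lambda_n,\varrho,\mathfrak q}^{\xi^{\min}}$ typically requires an auxiliary decay-of-correlations or finite-energy hypothesis on the signal profile $\varrho$, and some care is needed so that the three-state structure $\mathfrak q_3=\{\bar q,\hat q,\tilde q\}$ remains compatible with the two-valued edge variables on which FKG is applied.
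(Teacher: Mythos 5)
Your proposal and the paper's proof end up leaning on the same external source, but they do so very differently. The paper's entire argument is three sentences: it introduces the log-odds $\hat\pi=\log[\varrho/(1-\varrho)]$, restricts $\varrho$ to a set $\mathbb D\subseteq(0,1)$, and then asserts the conclusion ``by Theorem 4.2 of \cite{grimmett1995} and by Picard--Lindel\"of''. (The invocation of Picard--Lindel\"of, an ODE existence--uniqueness theorem, plays no visible role for measures on $\Omega_E=\{0,1\}^E$.) You, by contrast, actually unpack what such a citation has to deliver: compactness of $\Omega_E$ and Prokhorov for subsequential weak limits, locality of the specification kernels for DLR consistency, FKG and stochastic domination to produce the extremal limits $\phi^{\min}$ and $\phi^{\max}$, and a sandwich argument reducing uniqueness to $\phi^{\min}=\phi^{\max}$. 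That reconstruction is the standard and correct skeleton, and it is considerably more informative than what the paper provides.

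The genuine gap is exactly where you place it, and it is worth being blunt that it is not merely a technical loose end. In the random-cluster setting, FKG and monotonicity alone never give $\phi^{\min}=\phi^{\max}$; uniqueness of the infinite-volume measure is the hard content of Grimmett's theorem, holds only for all but countably many values of the edge parameter (via convexity of the pressure, not via a mixing estimate), and can genuinely fail at criticality. Lemma \ref{l1} cannot substitute for this: its conclusion $\|W_{k,\chi_{-k},\omega_{-k}}(s)-W_{l,\chi_{-l},\omega_{-l}}(s)\|<1$ is a total-variation bound between two interaction functions indexed by risk-groups, not a bound on the dependence of $\phi_{\Lambda_n,\varrho,\mathfrak q}^{\xi}$ on the boundary condition $\xi$, so it gives no decay of boundary influence on a fixed cylinder event. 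The paper closes this gap only by the remark after Definition \ref{d1} that homogeneity of the box structure forces $|\Re_{\varrho,\mathfrak q}|=1$ ``following \cite{grimmett1995}'' --- i.e., by assumption plus citation. Your additional worry about reconciling the three-state structure $\mathfrak q_3=\{\bar q,\hat q,\tilde q\}$ with the two-valued edge variables on which FKG operates is also legitimate and is nowhere addressed in the paper. So: your route is the right one, but to complete it you would need either to import Grimmett's pressure-convexity argument wholesale (accepting uniqueness only off a countable exceptional set of $\varrho$) or to impose an explicit decay-of-correlations hypothesis on $\varrho$; as stated, the unconditional uniqueness claim is not proved by either your argument or the paper's.
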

\begin{proof}
	See in the Appendix.
\end{proof}
Proposition \ref{p1} guarantees that if risk-group $k$ has imperfect and complete information then under $\mathfrak q_3$ the random network has a unique solution.

\subsection{Objective function}
So far I have discussed about the stochastic dynamic systems of fatigue ($z_k$), infection rate ($\be^k$), multi-risk SIR ($S_k,I_k$ and $R_k$) and opinion of risk-group $k$ ($\omega_k$) with its probability conditioned on less severity as $W_k$. This section will discuss about the objective of the policy makers subject to the stochastic dynamics discussed above.

Let $\mathcal H_k(s)$ be the total number individuals of risk-group $k$ who need emergency care at time $s$. Hence, $\mathcal H_k(s)=\check{h}I_k(s)$, where $\check h\in(0,1)$ is some given proportionality constant available at time $s$ \citep{acemoglu2020}. Therefore, total number of people in $K$ risk-groups who need emergency care is $\mathcal H(s)=\sum_{k=1}^K\mathcal H_k(s)$. Following \cite{acemoglu2020} I assume that probability of death such that the person was under emergency care as $\varpi_k(s)=\varphi_k[\mathcal H(s)]$, for some given function $\varphi_k$. In this analysis a cost of death or value of life is included as $\breve\chi_k$ \citep{acemoglu2020}. By value of life I mean value of increasing the survival probabilities marginally due to COVID-19. In other words, one can think about the impact of death on a family in risk-group $k$ in terms of monetary loss and  emotional losses of that person's family as well as risk-group $k$. A policy maker considers this cost as non-pecuniary cost of death and is denoted by $\breve\chi_k\check h\varpi_k(s)I_k(s)$ as $\check h\varpi_k(s)I_k(s)$ is defined as the flow of death. 

I assume that the detection of a person infected by COVID-19 is imperfect as well as their isolation status. With out loss of generality assume $\tau_k$ be the constant probability that an infected person in risk-group $k$ does not need an emergency care and based on that person's $F(p_0)$-value risk-group $k$ would decide whether it will isolate that person or not. If $F(p_0)\downarrow 1$ then individual in risk-group $k$ will not be isolated with probability $\tau_kF(p_0)$ or simply $\tau_k$. On the other hand, if $F(p_0)\downarrow 0$, individual in risk-group $k$ will be isolated with probability $\tau_kF(p_0)$. Let $\hat\tau_k$ be the probability where an individual in risk-group $k$ is detected and need an emergency care for recovery. Hence, $F(p_0)$ is not as powerful as the case for those who do not need ICU care. Therefore, I restrict the upper limit of $F(p_0)$ as $\hat F_p<1/2$. This part is some extension of \cite{acemoglu2020} where individual opinion of risk-group $k$ was not considered. Therefore, the probability that a person is infected by COVID-19, detected and isolated in risk-group $k$ is 
\[
\check h\hat\tau_k\hat F_p+(1-\check h)\tau_k F(p_0).
\]
In the presence of Omicron, a completely vaccinated and boosted person in risk-group $k$ would have some probability $\tilde\tau_k$ to get infected by COVID-19 again. Therefore, I assume that the probability of a recovered person not to get infected by COVID-19 for risk-group $k$ is $(1-\tilde\tau_k)$. Due to imperfect testing assume a fraction $\breve\tau_k$ of recovered person in risk-group $k$ with probability $(1-\tilde\tau_k)$ are allowed to join the workforce freely. Remaining part of the recovered population is either not identified \citep{acemoglu2020} or because of the traumatic experience their $F(p_0)$ is very low and reluctant to join in the labor force. Therefore, the employment for somebody in $k^{th}$ risk-group at time $s$  is given by 
\begin{equation}\label{12}
\mathcal E_k(s)=e_k(s)\biggr\{S_k(s)+\left[1-\check h\hat\tau_k\hat F_p-(1-\check h)\tau_k F(p_0)\right]I_k(s)+(1-\breve\tau_k)\tilde\tau_kR_k(s)\biggr\}+\breve\tau_k(1-\tilde\tau_k)R_k(s).
\end{equation}
A policymaker has to control $\{e_k(s)\}_{k\in K}$ for all $s\in[0,t]$ where the dynamical system follows Equations (\ref{0}), (\ref{4}) and (\ref{6}). Planner's objective function is to minimize the expected present value of the social cost conditioned on the filtration $\mathbb F^0$ as 
\begin{multline}\label{13}
\mathbf H_\theta:\mathbf H_\theta^k(s,e_k,z_k,S_k,I_k,R_k,W_k)\\=\min_{\{e_k,z_k,S_k,I_k,R_k,W_k\}}\E_0\left\{\int_0^t\biggr[\exp\{-\rho s\}\sum_{k=1}^K\theta_kz_k(s)\left[N_k-\mathcal E_k(s)\right]+\breve\chi_k\check h\varpi_k(s)I_k(s)\biggr]ds\biggr|\mathbb F^0\right\},
\end{multline}
where $\theta_k>0$ is some known penalization constant, $\rho\in(0,1)$ is time independent discount rate and $\E_0$ is the conditional expectation at time 0 on the initial state variables $z_k(0),S_k(0),I_k(0),R_k(0)$ and $W_k(0)$ with filtration $\mathbb F^0$.
\begin{as}\label{as2}
	Following set of assumptions regarding the objective function is considered:
\begin{itemize}
	\item $\{\mathcal F_s\}$ takes the values from a set $\mathfrak Z\subset \mathbb R^{5K}$. $\{\mathcal F_s\}_{s=0}^t$ is an exogenous Markovian stochastic processes defined on the probability space $(\mathfrak Z_\infty,\mathbb F^0,\mathbb P)$.
	\item For all $\{e_k(s),z_k(s),S_k(s),I_k(s),R_k(s),W_k(s)\}$, there exists an optimal lock intensity $\{\overline e_k(s)\}_{s=0}^t$, with initial conditions $z_k(0),S_k(0),I_k(0),R_k(0)$ and $W_k(0)$, which satisfy the stochastic dynamics represented by the equations (\ref{0}), (\ref{4}) and (\ref{6}) for all continuous time $s\in[0,t]$.
	\item The function $ \exp\{-\rho s\}\sum_{k=1}^K\theta_kz_k(s)\left[N_k-\mathcal E_k(s)\right]+\breve\chi_k\check h\varpi_k(s)I_k(s)$ is uniformly bounded, continuous on both the state and control spaces and, for a given $\{e_k(s),z_k(s),S_k(s),I_k(s),R_k(s),W_k(s)\}$, they are $\mathbb F^0$-measurable.
	\item The function $ \exp\{-\rho s\}\sum_{k=1}^K\theta_kz_k(s)\left[N_k-\mathcal E_k(s)\right]+\breve\chi_k\check h\varpi_k(s)I_k(s)$ is strictly convex with respect to the state and the control variables.
	\item For all $\{e_k(s),z_k(s),S_k(s),I_k(s),R_k(s),W_k(s)\}$, there exists a $k$-interior lock intensity $\{\widetilde e_k(s)\}_{s=0}^t$, with initial conditions $z_k(0),S_k(0),I_k(0),R_k(0)$ and $W_k(0)$ satisfy Equations (\ref{0}), (\ref{4}) and (\ref{6}), such that
	\[
	\E_0\left\{\biggr[\exp\{-\rho s\}\sum_{k=1}^K\theta_kz_k(s)\left[N_k-\mathcal E_k(s)\right]+\breve\chi_k\check h\varpi_k(s)I_k(s)\biggr]\biggr|\mathbb F^0\right\}>0,
	\]
	and, for $k\neq l$
	\[
	\E_0\left\{\biggr[\exp\{-\rho s\}\sum_{k=1}^K\theta_k\tilde z_k(s)\left[N_k-{\widetilde{\mathcal E_k}}(s)\right]+\breve\chi_k\check h\varpi_k(s)\tilde I_k(s)\biggr]\biggr|\mathbb F^0\right\}\geq0.
	\]
	\item In addition to the above argument, there exists an $\varepsilon>0$ such that for all $\{e_k(s),z_k(s),S_k(s),I_k(s),R_k(s),W_k(s)\}$,
	\[
	\E_0\left\{\biggr[\exp\{-\rho s\}\sum_{k=1}^K\theta_k\tilde z_k(s)\left[N_k-{\widetilde{\mathcal E_k}}(s)\right]+\breve\chi_k\check h\varpi_k(s)\tilde I_k(s)\biggr]\biggr|\mathbb F^0\right\}\geq\varepsilon.
	\]
\end{itemize}
\end{as}

\begin{definition}\label{d2}
	For individual risk-group $k$ optimal state variables\\  $z_k^*(s),S_k^*(s),I_k^*(s),R_k^*(s)$ and, $W_k^*(s)$  and their continuous optimal lock intensity  $e_k^*(s)$ constitute a stochastic dynamic  equilibrium  such that for all $s\in[0,t]$ the conditional expectation of the objective function is
	\begin{multline*}
	\E_0\left\{\int_0^t\biggr[\exp\{-\rho s\}\sum_{k=1}^K\theta_kz_k^*(s)\left[N_k-\mathcal E_k^*(s)\right]+\breve\chi_k\check h\varpi_k(s)I_k^*(s)\biggr]ds\biggr|\mathbb F_*^0\right\}\\\leq \E_0\left\{\int_0^t\biggr[\exp\{-\rho s\}\sum_{k=1}^K\theta_kz_k(s)\left[N_k-\mathcal E_k(s)\right]+\breve\chi_k\check h\varpi_k(s)I_k(s)\biggr]ds\biggr|\mathbb F^0\right\},
	\end{multline*}
	with the dynamics explained in Equations (\ref{0}), (\ref{4}) and (\ref{6}), where $\mathbb F_*^0$ is the optimal filtration starting at time $0$ such that, $\mathbb F_*^0\subset\mathbb F^0$.
\end{definition}

\begin{definition}\label{d3}
Suppose, $z_k,S_k,I_k,R_k$ and $W_k$ are in a non-homogeneous Fellerian semigroup on continuous time interval $[0,t]$ in $\mathbb{R}^{6K}$. The infinitesimal generator $\mathfrak H$ of $\{z_k,S_k,I_k,R_k,W_k\}$ is defined by,
\begin{equation*}
\mathfrak H \mathbf H_\theta^k(e_k,z_k,S_k,I_k,R_k,W_k)=\lim_{s\downarrow 0}\frac{\E_s[\mathbf H_\theta^k(e_k,z_k,S_k,I_k,R_k,W_k)]-\mathbf H_\theta^k(e_k,\bar z_k,\bar S_k,\bar I_k,\bar R_k,\bar W_k)}{s},
\end{equation*}
for $\{z_k,S_k,I_k,R_k,W_k\}\in\mathbb{R}^{5K}$	where $\mathbf H_\theta^k:\mathbb{R}^{6K}\ra\mathbb{R}$ is a $C_0^2(\mathbb{R}^{6K})$ function,\\ $\{z_k,S_k,I_k,R_k,W_k\}$ has a compact support, and at $\{\bar z_k,\bar S_k,\bar I_k,\bar R_k,\bar W_k\}$ the limit exists where $\E_s$ represents individual risk-group $k$'s conditional expectation on states $\{z_k,S_k,I_k,R_k,W_k\}$ at continuous time $s$. Furthermore, if the above Fellerian semigroup is homogeneous over time, then $\mathfrak H \mathbf H_\theta^k$ is exactly equal to the Laplace operator.	
\end{definition}

As $\mathbf H_\theta^k$ is a $\mathbb F^0$-measurable function depending on $s$, there is a possibility that this function might have very large values and may be unstable. In order to stabilize the state variables $z_k,S_k,I_k,R_k,W_k$ I take the natural logarithmic transformation and define a characteristic like operator as in Definition \ref{d4}.

\begin{definition}\label{d4}
	For a Fellerian semigroup $\{z_k,S_k,I_k,R_k,W_k\}$ and for a small time interval $[s,s+\epsilon]$ with $\epsilon\downarrow 0$, define a characteristic-like operator where the process starts at $s$ is defined as 
	\begin{equation*}
	\hat{\mathfrak H}\mathbf H_\theta^k(e_k,z_k,S_k,I_k,R_k,W_k) =\lim_{\epsilon\downarrow 0}
	\frac{\log\E_s[\epsilon^2\ \mathbf H_\theta^k(e_k,z_k,S_k,I_k,R_k,W_k)]-\log[\epsilon^2\mathbf H_\theta^k(e_k,\bar z_k,\bar S_k,\bar I_k,\bar R_k,\bar W_k)]}{\log\E_s(\epsilon^2)},
	\end{equation*}
	for $\{z_k,S_k,I_k,R_k,W_k\}\in\mathbb{R}^{5K}$,	where $\mathbf H_\theta^k:\mathbb{R}^{5K}\ra\mathbb{R}$ is a $C_0^2(\mathbb{R}^{5K})$ function, $\E_s$ represents the conditional expectation of state variables $\{z_k,S_k,I_k,R_k,W_k\}$ at time $s$,  for $\epsilon>0$ and a fixed $\mathbf H_\theta^k$ the sets of all open balls of the form $B_\epsilon\left(\mathbf H_\theta^k\right)$ contained in $\mathcal{B}$ (set of all open balls) and as $\epsilon\downarrow 0$ then $\log\E_s(\epsilon^2)\ra\infty$.
\end{definition}

Policy maker's objective is to minimize the objective function expressed in Equation (\ref{13})  subject to the dynamic system represented by the equations (\ref{0}), (\ref{5}) and (\ref{7}). Following \cite{pramanik2020optimization} the quantum Lagrangian of risk-group $k$ can be expressed as 
\begin{multline}\label{14}
\mathcal{L}_k(s,\rho,\theta_k,\breve\chi_k,\check h,\varpi_k,e_k,z_k,S_k,I_k,R_k,W_k)\\=\E_s\biggr\{\exp\{-\rho s\}\sum_{k=1}^K\theta_kz_k(s)\left[N_k-\mathcal E_k(s)\right]+\breve\chi_k\check h\varpi_k(s)I_k(s)\\+\lambda_1\left[\Delta z_k(s)-[\kappa_0\{1-e_k(s)\}-\kappa_1z_k(s)p(\eta_{k_i},s)]ds-\sigma_0^k[z_k(s)-z_k^*]dB_0^k(s)\right]\\+\lambda_2\left[\Delta S_k(s)-\mu_1(s,e_k,z_k,S_k,I_k,R_k)ds-\sigma_5^k(S_k)dB_2^k\right]\\+\lambda_3\left[\Delta I_k(s)-\mu_2(s,e_k,z_k,S_k,I_k,R_k)ds-\sigma_6^k(I_k)dB_2^k\right]\\+\lambda_4\left[\Delta R_k(s)-\mu_3(s,e_k,z_k,S_k,I_k,R_k)ds-\sigma_7^k(R_k)dB_2^k\right]\\+\lambda_5\left[\Delta W_k(s)-\mu_4(s,e_k,z_k,S_k,I_k,R_k)ds+\sigma_{10}^k(s,e_k,\omega_k,\omega_l)dB_2^k\right]\biggr\},
\end{multline}
where $\lambda_i>0$ for all $i=\{1,2,3,4\}$ are time independent quantum Lagrangian multipliers and $\Delta$'s represent small change of state variables in time interval $(s,s+\varepsilon)$ for all $\varepsilon>0$ and $\varepsilon \searrow 0$.  As $\lambda$'s do not depend on time,  they are considered as penalization constants. At time $s$ risk-group $k$ can predict based on all information available regarding state variables at that time, throughout interval $[s,s+\varepsilon]$ it has the same conditional expectation which ultimately gets rid of the integration.

\medskip

\section{Main results}
In this section I am going to determine an optimal lock intensity for risk-group $k$. By using Feynman-type path integral approach I find a Euclidean action function, define a transition wave function and finally, I derive a Fokker-Plank-type (i.e. Wick-rotated Schr\"odinger-type) equation of the system. 

\begin{prop}\label{p2}
Suppose, the domain of the quantum Lagrangian $\mathcal L_k$ has a non-empty, convex and compact denoted as $\widetilde\Xi$ such that $\widetilde\Xi\subset \mathbb R^{6K}\times G$. As $\mathcal L_k: \widetilde\Xi\ra\widetilde\Xi$ is continuous, then for any given positive constants $\rho,\theta_k,\breve\xi_k,\breve h$ and $\varpi_k$, there exists a vector of state and control variables $\bar Z_k^*=[e_k^*,z_k^*,S_k^*,I_k^*,R_k^*,W_k^*]^T$ in continouous time $s\in[0,t]$ such that $\mathcal L_k$ has a fixed-point in Brouwer sense, where $T$ denotes the transposition of a matrix.
\end{prop}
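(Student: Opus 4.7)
The plan is to invoke Brouwer's fixed-point theorem directly, since the statement essentially pre-packages its hypotheses. Accordingly, the bulk of the work is verifying each precondition against the structural assumptions already in place. First, I would check that $\widetilde\Xi$ is an admissible Brouwer domain. Non-emptiness is immediate: the initial configuration $(e_k(0), z_k(0), S_k(0), I_k(0), R_k(0), W_k(0))$ is itself an admissible point. The natural bounds on the state variables make compactness and convexity transparent --- $S_k, I_k, R_k \in [0, N_k]$ by the SIR identity, $z_k \in [0, Z_{\max}]$ with $Z_{\max}=\kappa_0/[\kappa_1 p(\eta_{k_i},s)]$ as noted after Equation (\ref{0}), $W_k(s) = Pr_s(L\mid\omega_k) \in [0,1]$ by construction, and $e_k \in [0,1]$ as the lock-down ratio. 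A product of such closed intervals is compact and convex in $\mathbb{R}^{6K}$.

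Second, I would verify continuity of $\mathcal{L}_k$. The explicit form in Equation (\ref{14}) is a finite linear combination of the instantaneous social cost and the multiplier-weighted dynamic increments $\Delta z_k,\Delta S_k,\Delta I_k,\Delta R_k,\Delta W_k$. The instantaneous cost is continuous in the state and control variables by Assumption \ref{as2}. For the increments, Assumptions \ref{as0} and \ref{as1} give Lipschitz drift and diffusion coefficients with linear growth, and Lemma \ref{l0} together with Proposition \ref{p0} yields continuous dependence of the state trajectories on initial data and controls in the $\mathbb{S}^2$ sense. Taking the conditional expectation $\E_s$ preserves continuity because the integrand is uniformly bounded on the compact domain $\widetilde\Xi$, so dominated convergence applies along any converging sequence in $\widetilde\Xi$. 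With the three Brouwer hypotheses in place, the theorem produces a $\bar Z_k^* \in \widetilde\Xi$ with $\mathcal{L}_k(\bar Z_k^*) = \bar Z_k^*$, which is precisely the asserted fixed point for the stated positive constants $\rho,\theta_k,\breve\chi_k,\check h,\varpi_k$.

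The hard part will be handling the network factor $G$ in $\widetilde\Xi \subset \mathbb{R}^{6K}\times G$. Since $G=(N,X,\varrho)$ is a random network rather than a Euclidean component, a direct appeal to Brouwer requires embedding $G$ via the simplex of signal/state probability distributions $\varrho$ into a finite-dimensional simplex, while preserving both continuity and compactness. An alternative route would be to condition on a realized sample graph $\mathcal{G}$ and apply Brouwer fibrewise, then invoke Proposition \ref{p1} (uniqueness of the random opinion cluster under $\mathfrak{q}_3$) to glue the fibrewise fixed points into a single global one. Everything else is routine verification of Brouwer's hypotheses against the assumptions already established.
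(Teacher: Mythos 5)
Your proposal is correct in substance but takes a genuinely different route from the paper's. You treat Brouwer's theorem as a black box and spend your effort verifying its hypotheses (non-emptiness, convexity and compactness of $\widetilde\Xi$, continuity of $\mathcal L_k$) from the model primitives. The paper does the opposite: it takes those hypotheses as given --- they are literally assumed in the statement of Proposition \ref{p2} --- and instead re-derives the fixed-point conclusion from scratch via the Knaster--Kuratowski--Mazurkiewicz lemma. In its Case I the domain is the simplex spanned by affinely independent points $\{Z_k(s)\}_{k\in\beth}$; each $Z(s)$ is written in barycentric coordinates $\alpha_k(s,Z)$, the closed sets $\widetilde\Xi_k=\{Z:\alpha_k[\mathcal L_k(s,Z)]\leq\alpha_k(s,Z)\}$ are shown to satisfy the KKM covering condition, and a point of $\bigcap_{k\in\beth}\widetilde\Xi_k$ is necessarily fixed. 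Case II reduces a general convex compact $\widetilde\Xi$ to Case I by extending $\mathcal L_k$ to an enclosing simplex through the radial retraction $\bar\zeta(s,Z)$. Your route is shorter, and your hypothesis-verification (bounding $S_k,I_k,R_k$ by $N_k$, $z_k$ by $Z_{\max}$, and $W_k,e_k$ by $[0,1]$) is useful added content the paper omits entirely; the paper's route buys self-containedness and matches the KKM machinery standard in the mathematical-economics literature it cites. You are also right to single out the factor $G$ in $\widetilde\Xi\subset\mathbb R^{6K}\times G$ as the real obstruction: both Brouwer and KKM require a convex compact subset of a finite-dimensional vector space, and a space of random networks is not one. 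The paper's proof silently ignores this and manipulates $\widetilde\Xi$ as though it sat inside a Euclidean simplex, so your proposed remedies (embedding $G$ through the finite-dimensional simplex of signal distributions $\varrho$, or conditioning on a realized graph and gluing fibrewise fixed points via Proposition \ref{p1}) address a gap that is present in the paper's own argument rather than one you introduced.
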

\begin{proof}
	See in the Appendix.
\end{proof}
Proposition \ref{p2} guarantees that the pandemic control problem at least one fixed point, which leads to the next Theorem \ref{t0}. Theorem \ref{t0} is the main result of this paper. It uses a Euclidean path integral approach based on a Feynman-type action function to get an optimal ``lock-down" intensity.

\begin{theorem}\label{t0}
Suppose, for all $k\in\{1,2,...,K\}$ a social planner's objective is to minimize $\mathbf H_\theta^k$ subject to the stochastic dynamic system explained in the Equations (\ref{0}), (\ref{4}) and (\ref{6}) such that the Assumptions (\ref{as0})- (\ref{as2}) and Propositions \ref{p0}-\ref{p2} hold. For a $C^2$-function $\tilde f_k(s,e_k,z_k,S_k,I_k,R_k,W_k)$ and for all $s\in[0,t]$ there exists a function  $g_k(z_k,S_k,I_k,R_k,W_k)\in C^2([0,t]\times\mathbb{R}^{5K})$ such that  $\widetilde Y_k=g_k[z_k,S_k,I_k,R_k,W_k]$, with  an It\^o process $\widetilde Y_k$, and for a non-singular matrix 
\[
\bm\Theta_k=\mbox{$\frac{1}{2}$}\begin{bmatrix}
\frac{\partial^2\tilde f_k}{\partial z_k^2} & \frac{\partial^2\tilde f_k}{\partial z_k\partial S_k} & \frac{\partial^2\tilde f_k}{\partial z_k\partial I_k}&\frac{\partial^2\tilde f_k}{\partial z_k\partial R_k}& \frac{\partial^2\tilde f_k}{\partial z_k\partial W_k}\\
\frac{\partial^2\tilde f_k}{\partial S_k\partial z_k}&\frac{\partial^2\tilde f_k}{\partial S_k^2}&\frac{\partial^2\tilde f_k}{\partial S_k\partial I_k}&\frac{\partial^2\tilde f_k}{\partial S_k\partial R_k}&\frac{\partial^2\tilde f_k}{\partial S_k\partial W_k}\\
\frac{\partial^2\tilde f_k}{\partial I_k\partial z_k}& \frac{\partial^2\tilde f_k}{\partial I_k\partial S_k}&\frac{\partial^2\tilde f_k}{\partial I_k^2}&\frac{\partial^2\tilde f_k}{\partial I_k\partial R_k}&\frac{\partial^2\tilde f_k}{\partial I_k\partial W_k}\\
\frac{\partial^2\tilde f_k}{\partial R_k\partial z_k}&\frac{\partial^2\tilde f_k}{\partial R_k\partial S_k}&\frac{\partial^2\tilde f_k}{\partial R_k\partial I_k}&\frac{\partial^2\tilde f_k}{\partial R_k^2}&\frac{\partial^2\tilde f_k}{\partial R_k\partial W_k}\\
\frac{\partial^2\tilde f_k}{\partial W_k\partial z_k}&\frac{\partial^2\tilde f_k}{\partial W_k\partial S_k}&\frac{\partial^2\tilde f_k}{\partial W_k\partial I_k}& \frac{\partial^2\tilde f_k}{\partial W_k\partial R_k}&\frac{\partial^2\tilde f_k}{\partial W_k^2},
\end{bmatrix}
\]
optimal ``lock-down" intensity $e_k^*$ is the solution of the Equation
\begin{equation}\label{22}
-\frac{\partial }{\partial e_k}\tilde f_k(s,e_k,z_k,S_k,I_k,R_k,W_k)\Psi_s^{k\tau}(z_k,S_k,I_k,R_k,W_k)=0,
\end{equation}
where $\Psi_s^{k\tau}$ is some transition wave function in $\{\mathbb R^{5K}\times G\}$.
\end{theorem}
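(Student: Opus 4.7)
The plan is to derive equation (\ref{22}) via the Feynman-type path integral formalism, starting from the quantum Lagrangian $\mathcal L_k$ in equation (\ref{14}). First, over a small time interval $[s,s+\varepsilon]$ with $\varepsilon\searrow 0$, I would construct the Euclidean action function $\tilde f_k(s,e_k,z_k,S_k,I_k,R_k,W_k)$ by Taylor-expanding $\mathcal L_k$ to second order in the state increments. Applying It\^o's lemma to $\widetilde Y_k=g_k(z_k,S_k,I_k,R_k,W_k)$, the quadratic variations in the diffusion terms of equations (\ref{0}), (\ref{5}) and (\ref{7}) collect into the symmetric Hessian $\bm\Theta_k$ of $\tilde f_k$ (with the $1/2$ factor as stated), while Assumption \ref{as0} guarantees that all of the associated stochastic integrals are well defined.

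Next, I would define the transition wave function in $\mathbb R^{5K}\times G$ as the short-time Feynman propagator
\[
\Psi_s^{k\tau}(z_k,S_k,I_k,R_k,W_k)=\int_{\widetilde\Xi}\exp\left\{-\tilde f_k(s,e_k,z_k,S_k,I_k,R_k,W_k)\right\}D[z_k,S_k,I_k,R_k,W_k],
\]
where the (improper) integration is taken over admissible trajectories in the compact feasible set $\widetilde\Xi$ supplied by Proposition \ref{p2}. Applying the characteristic-like operator $\hat{\mathfrak H}$ from Definition \ref{d4} to the product $\exp(-\tilde f_k)\,\Psi_s^{k\tau}$ and passing $\varepsilon\downarrow 0$ produces a Wick-rotated Schr\"odinger (Fokker-Plank-type) equation for $\Psi_s^{k\tau}$ that plays the role of an HJB equation for $\mathbf H_\theta^k$.

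The optimal lock-down intensity is then identified by the first-order necessary condition for minimization of this HJB analog. Because $\Psi_s^{k\tau}$ depends on $e_k$ only through $\tilde f_k$ sitting inside the exponential, interchanging $\partial/\partial e_k$ with the path integral and applying the chain rule produces
\[
-\frac{\partial\tilde f_k}{\partial e_k}(s,e_k,z_k,S_k,I_k,R_k,W_k)\,\Psi_s^{k\tau}(z_k,S_k,I_k,R_k,W_k)=0,
\]
which is exactly equation (\ref{22}). Strict convexity of the running cost in Assumption \ref{as2} combined with Proposition \ref{p2} ensures that the resulting $e_k^*$ is the unique interior minimizer, while non-singularity of $\bm\Theta_k$ justifies the Gaussian short-time expansion of the propagator around the classical trajectory.

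The main obstacle will be making the short-time path integral rigorous: Feynman path integrals are not countably additive (as the introduction acknowledges), so convergence of the propagator, commutativity of $\partial/\partial e_k$ with the integral, and commutativity of $\hat{\mathfrak H}$ with the exponential all require careful justification. The Lipschitz and linear-growth bounds from Assumptions \ref{as0}--\ref{as1}, together with the uniform boundedness and strict convexity of the integrand in Assumption \ref{as2}, should supply the dominated convergence needed to push these limits through, while Lemma \ref{l0}'s $L^2$-control on $z_k$ (and the analogous control for the SIR and opinion processes from Proposition \ref{p0}) delivers the tightness required to localize the path integral to $\widetilde\Xi$.
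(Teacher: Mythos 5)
Your overall strategy is the paper's: build the Euclidean action from the quantum Lagrangian (\ref{14}), use It\^o's formula to absorb the quadratic variations into a second-order expansion, pass to a short-time transition function, and read off the optimality condition for $e_k$ from the resulting Wick-rotated Schr\"odinger equation. However, there is a concrete gap in the middle of your argument: the step that actually produces Equation (\ref{22}). In the paper, the transition function is defined \emph{recursively} by the Chapman--Kolmogorov-type convolution (\ref{15}), $\Psi_{s,s+\varepsilon}^{k}=\frac{1}{L_\varepsilon}\int_{\mathbb R^{5K}}\exp[-\varepsilon\mathcal A_{s,s+\varepsilon}]\Psi_s^k\,dz_k\cdots dW_k$; one then Taylor-expands both sides, evaluates the shifted Gaussian integral with covariance governed by $\bm\Theta_k$ and normalization $L_\varepsilon=\pi/\sqrt{\varepsilon|\bm\Theta_k|}$, and matches orders of $\varepsilon$ to obtain the Fokker--Planck-type equation $\partial_s\Psi_s^{k\tau}=-\tilde f_k\,\Psi_s^{k\tau}$. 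Equation (\ref{22}) is then the first-order condition in $e_k$ applied to the right-hand side of \emph{that} equation, which is why it has the pointwise product form $-\bigl(\partial\tilde f_k/\partial e_k\bigr)\Psi_s^{k\tau}=0$.

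Your proposal replaces this with a non-recursive propagator $\Psi_s^{k\tau}=\int_{\widetilde\Xi}\exp\{-\tilde f_k\}D[\cdot]$ and then differentiates under the integral sign. That operation yields $-\int_{\widetilde\Xi}\bigl(\partial\tilde f_k/\partial e_k\bigr)\exp\{-\tilde f_k\}D[\cdot]=0$, i.e.\ an \emph{integrated} stationarity condition, not the product $-\bigl(\partial\tilde f_k/\partial e_k\bigr)\Psi_s^{k\tau}=0$ asserted in the theorem; you cannot pull $\partial\tilde f_k/\partial e_k$ outside the integral without an additional argument. Moreover, without the recursion in (\ref{15}) there is no left-hand side $\Psi^{k\tau}_s+\varepsilon\,\partial_s\Psi^{k\tau}_s$ to match against, so the $\varepsilon$-order bookkeeping that delivers the evolution equation --- and with it the role of $L_\varepsilon$, the bounds $|(2\bm\Theta_k)^{-1}+z_k|\downarrow 0$, and the explicit appearance of $\bm\Theta_k^{-1}$ and $\mathbf J_k$ --- never happens. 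Your appeal to the characteristic-like operator $\hat{\mathfrak H}$ of Definition \ref{d4} does not substitute for this: the paper's proof never actually applies $\hat{\mathfrak H}$, and applying it to the product $\exp(-\tilde f_k)\Psi_s^{k\tau}$ is not a defined operation in the framework as stated. To repair the proposal, replace the direct propagator definition with the convolution identity (\ref{15}), carry out the shifted Gaussian integration in (\ref{18})--(\ref{20}), and derive $\partial_s\Psi_s^{k\tau}=-\tilde f_k\Psi_s^{k\tau}$ before imposing the first-order condition in $e_k$.
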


\begin{proof}
	From quantum Lagrangian function expressed in the Equation (\ref{14}), the Euclidean action function for risk-group $k$ in $[0,t]$ is given by
	\begin{multline*}
	\mathcal A_{0,t}^k(z_k,S_k,I_k,R_k,W_k)=\int_0^t\E_s\biggr\{\exp\{-\rho s\}\sum_{k=1}^K\theta_kz_k(s)\left[N_k-\mathcal E_k(s)\right]+\breve\chi_k\check h\varpi_k(s)I_k(s)ds\\+\lambda_1\left[\Delta z_k(s)-[\kappa_0\{1-e_k(s)\}-\kappa_1z_k(s)p(\eta_{k_i},s)]ds-\sigma_0^k[z_k(s)-z_k^*]dB_0^k(s)\right]\\+\lambda_2\left[\Delta S_k(s)-\mu_1(s,e_k,z_k,S_k,I_k,R_k)ds-\sigma_5^k(S_k)dB_2^k\right]\\+\lambda_3\left[\Delta I_k(s)-\mu_2(s,e_k,z_k,S_k,I_k,R_k)ds-\sigma_6^k(I_k)dB_2^k\right]\\+\lambda_4\left[\Delta R_k(s)-\mu_3(s,e_k,z_k,S_k,I_k,R_k)ds-\sigma_7^k(R_k)dB_2^k\right]\\+\lambda_5\left[\Delta W_k(s)-\mu_4(s,e_k,z_k,S_k,I_k,R_k)ds+\sigma_{10}^k(s,e_k,\omega_k,\omega_l)dB_2^k\right]\biggr\},
	\end{multline*}
	where $\lambda_i>0$ for all $i=\{1,2,3,4\}$ are time independent quantum Lagrangian multiplier. As at the beginning of the small time interval $[s,s+\epsilon]$, agent $k$ does not have any future information, they make expectations based on their  all state variables $\{z_k,S_k,I_k,R_k,W_k\}$. For a penalization constant $L_\epsilon>0$ and for time interval $[s,s+\epsilon]$ such that $\epsilon\downarrow 0$ define a transition function from $s$ to $s+\epsilon$ as
	\begin{multline}\label{15}
	\Psi_{s,s+\varepsilon}^k(z_k,S_k,I_k,R_k,W_k)=\frac{1}{L_\varepsilon} \int_{\mathbb{R}^{5K}}\exp[-\varepsilon \mathcal{A}_{s,s+\varepsilon}(z_k,S_k,I_k,R_k,W_k)]\Psi_s^k(z_k,S_k,I_k,R_k,W_k)\\\times dz_k\times dS_k\times dI_k\times dR_k\times dW_k,
	\end{multline}
	where $\Psi_s^k(z_k,S_k,I_k,R_k,W_k)$ is the value of the transition function at time $s$ with the initial condition 
	\[
	\Psi_0^k(z_k,S_k,I_k,R_k,W_k)=\Psi_0^k
	\]
	 and the action function of risk-group $k$ is, 
	\begin{multline*}
	\mathcal A_{s,s+\varepsilon}(z_k,S_k,I_k,R_k,W_k)=\int_s^{s+\varepsilon}\E_\nu\biggr\{\left[\exp\{-\rho \nu\}\sum_{k=1}^K\theta_kz_k(\nu)\left[N_k-\mathcal E_k(\nu)\right]+\breve\chi_k\check h\varpi_k(\nu)I_k(\nu)\right]d\nu\\+g_k\left[\nu+\Delta \nu, S_k(\nu)+\Delta S_k(\nu),I_K+ \Delta I_k(\nu),R_k(\nu)+\Delta R_k(\nu), W_k(\nu)+\Delta W_k(\nu)\right]\biggr\},
	\end{multline*}
	where $g_k(z_k,S_k,I_k,R_k,W_k)\in C^2([0,t]\times\mathbb{R}^{5K})$ such that Assumptions \ref{as0}- \ref{as2} hold and $\widetilde Y_k(\nu)=g_k[z_k,S_k,I_k,R_k,W_k]$, where $\widetilde Y_k$ is an It\^o process \citep{oksendal2003} and,
	\begin{multline*}
	g_k(z_k,S_k,I_k,R_k,W_k)\\=\lambda_1\left[\Delta z_k(s)-[\kappa_0\{1-e_k(s)\}-\kappa_1z_k(s)p(\eta_{k_i},s)]ds-\sigma_0^k[z_k(s)-z_k^*]dB_0^k(s)\right]\\+\lambda_2\left[\Delta S_k(s)-\mu_1(s,e_k,z_k,S_k,I_k,R_k)ds-\sigma_5^k(S_k)dB_2^k\right]\\+\lambda_3\left[\Delta I_k(s)-\mu_2(s,e_k,z_k,S_k,I_k,R_k)ds-\sigma_6^k(I_k)dB_2^k\right]\\+\lambda_4\left[\Delta R_k(s)-\mu_3(s,e_k,z_k,S_k,I_k,R_k)ds-\sigma_7^k(R_k)dB_2^k\right]\\+\lambda_5\left[\Delta W_k(s)-\mu_4(s,e_k,z_k,S_k,I_k,R_k)ds+\sigma_{10}^k(s,e_k,\omega_k,\omega_l)dB_2^k\right]+o(1),
	\end{multline*}
	where $\Delta z_k=z_k(s+\varepsilon)-z_k(s)$, $\Delta S_k=S_k(s+\varepsilon)-S_k(s)$, $\Delta I_k=I_k(s+\varepsilon)-I_k(s)$, $\Delta R_k=R_k(s+\varepsilon)-R_k(s)$ and $\Delta W_k=W_k(s+\varepsilon)-W_k(s)$. In Equation (\ref{15}) $L_\varepsilon$ is a positive penalization constant such that the value of $\Psi_{s,s+\varepsilon}^k(.)$ becomes $1$. One can think this transition function $\Psi_{s,s+\varepsilon}^k(.)$ as some transition probability function on Euclidean space. I divide the time interval $[0,t]$ into $n$ small equal length time intervals $[s,s+\varepsilon]$ such that $\tau=s+\varepsilon$. After using Fubini's Theorem, the Euclidean action function for time interval $[s,\tau]$ becomes,
	\begin{multline*}
	\mathcal A_{s,\tau}(z_k,S_k,I_k,R_k,W_k)=\E_s\biggr\{\int_s^{\tau}\left[\exp\{-\rho \nu\}\sum_{k=1}^K\theta_kz_k(\nu)\left[N_k-\mathcal E_k(\nu)\right]+\breve\chi_k\check h\varpi_k(\nu)I_k(\nu)\right]d\nu\\+g_k\left[\nu+\Delta \nu, S_k(\nu)+\Delta S_k(\nu),I_K+ \Delta I_k(\nu),R_k(\nu)+\Delta R_k(\nu), W_k(\nu)+\Delta W_k(\nu)\right]\biggr\}.
	\end{multline*}
	
 After using the fact that $[\Delta z_k(s)]^2=[\Delta S_k(s)]^2=[\Delta I_k(s)]^2=[\Delta R_k(s)]^2=[\Delta W_k(s)]^2=\varepsilon$, and $\E_s[\Delta B_0^k]=\E_s[\Delta B_2^k]=\E_s[\Delta B_3^k]=\E_s[\Delta B_4^k]=\E_s[\Delta B_7^k]$ for all $\varepsilon\downarrow 0$, (with initial conditions $z_k(0),S_k(0),I_k(0),R_k(0),W_k(0)$) It\^o's formula and \cite{baaquie1997} imply,
 
 \begin{multline*}
 \mathcal A_{s,\tau}(z_k,S_k,I_k,R_k,W_k)=\exp\{-\rho s\}\sum_{k=1}^K\theta_kz_k(s)\left[N_k-\mathcal E_k(s)\right]+\breve\chi_k\check h\varpi_k(s)I_k(s)\\+g_k+\frac{\partial}{\partial s}g_k+\frac{\partial}{\partial z_k}g_k\times[\kappa_0\{1-e_k(s)\}-\kappa_1z_k(s)p(\eta_{k_i},s)]\\+\frac{\partial}{\partial S_k}g_k\mu_1(s,e_k,z_k,S_k,I_k,R_k)+\frac{\partial}{\partial I_k}g_k\mu_2(s,e_k,z_k,S_k,I_k,R_k)\\+\frac{\partial}{\partial R_k}g_k \mu_3(s,e_k,z_k,S_k,I_k,R_k)+\frac{\partial}{\partial W_k}g_k\mu_4(s,e_k,z_k,S_k,I_k,R_k)\\+\frac{1}{2}\left\{[\sigma_0^k(z_k(s)-z_k^*)]^2\frac{\partial^2}{\partial z_k^2}g_k+[\sigma_5^k(S_k)]^2\frac{\partial^2}{\partial S_k^2}g_k\right.\\\left.+[\sigma_6^k(I_k)]^2\frac{\partial^2}{\partial I_k^2}g_k+[\sigma_7^k(R_k)]^2\frac{\partial^2}{\partial R_k^2}g_k\right.\\\left.+[\sigma_{10}^k(s,e_k,\omega_k,\omega_l)]^2\frac{\partial^2}{\partial W_k^2}g_k+2\left[\sigma_5^k(S_k)[\sigma_0^k(z_k(s)-z_k^*)]\right.\right.\\\left.\left.\times\frac{\partial^2}{\partial z_k\partial S_k}g_k+\sigma_6^k(I_k)[\sigma_0^k(z_k(s)-z_k^*)]\frac{\partial^2}{\partial z_k\partial I_k}g_k\right.\right.\\\left.\left.+\sigma_7^k(R_k)[\sigma_0^k(z_k(s)-z_k^*)]\frac{\partial^2}{\partial z_k\partial R_k}g_k\right.\right.\\\left.\left.+[\sigma_0^k(z_k(s)-z_k^*)]\sigma_{10}^k(s,e_k,\omega_k,\omega_l)\frac{\partial^2}{\partial z_k\partial W_k}g_k\right.\right.\\\left.\left.+\sigma_5^k(S_k)\sigma_6^k(I_k)\frac{\partial^2}{\partial S_k\partial I_k}g_k+\sigma_5^k(S_k)\sigma_7^k(R_k)\frac{\partial^2}{\partial S_k\partial R_k}g_k\right.\right.\\\left.\left.+\sigma_5^k(S_k)\sigma_{10}^k(s,e_k,\omega_k,\omega_l)\frac{\partial^2}{\partial S_k\partial W_k}g_k+\sigma_6^k(I_k)\sigma_7^k(R_k)\right.\right.\\\left.\left.\times\frac{\partial^2}{\partial I_k\partial R_k}g_k+\sigma_6^k(I_k)\sigma_{10}^k(s,e_k,\omega_k,\omega_l)\frac{\partial^2}{\partial I_k\partial W_k}g_k\right.\right.\\\left.\left.+\sigma_7^k(R_k)\sigma_{10}^k(s,e_k,\omega_k,\omega_l)\frac{\partial^2}{\partial R_k\partial W_k}g_k\right]\right\}+o(1),
 \end{multline*}
 where $g_k=g_k(z_k,S_k,I_k,R_k,W_k)$.
 
 Result in Equation(\ref{15}) implies,
 \begin{multline}\label{16}
 \Psi_{s,s+\varepsilon}^k(z_k,S_k,I_k,R_k,W_k)\\=\frac{1}{L_\varepsilon} \int_{\mathbb{R}^{5K}}\exp\biggm[-\varepsilon \biggr[\exp\{-\rho s\}\sum_{k=1}^K\theta_kz_k(s)\left[N_k-\mathcal E_k(s)\right]+\breve\chi_k\check h\varpi_k(s)I_k(s)\\+g_k+\frac{\partial}{\partial s}g_k+\frac{\partial}{\partial z_k}g_k\times[\kappa_0\{1-e_k(s)\}-\kappa_1z_k(s)p(\eta_{k_i},s)]\\+\frac{\partial}{\partial S_k}g_k\mu_1(s,e_k,z_k,S_k,I_k,R_k)+\frac{\partial}{\partial I_k}g_k\mu_2(s,e_k,z_k,S_k,I_k,R_k)\\+\frac{\partial}{\partial R_k}g_k \mu_3(s,e_k,z_k,S_k,I_k,R_k)+\frac{\partial}{\partial W_k}g_k\mu_4(s,e_k,z_k,S_k,I_k,R_k)\\+\frac{1}{2}\left\{[\sigma_0^k(z_k(s)-z_k^*)]^2\frac{\partial^2}{\partial z_k^2}g_k+[\sigma_5^k(S_k)]^2\frac{\partial^2}{\partial S_k^2}g_k\right.\\\left.+[\sigma_6^k(I_k)]^2\frac{\partial^2}{\partial I_k^2}g_k+[\sigma_7^k(R_k)]^2\frac{\partial^2}{\partial R_k^2}g_k\right.\\\left.+[\sigma_{10}^k(s,e_k,\omega_k,\omega_l)]^2\frac{\partial^2}{\partial W_k^2}g_k+2\left[\sigma_5^k(S_k)[\sigma_0^k(z_k(s)-z_k^*)]\right.\right.\\\left.\left.\times\frac{\partial^2}{\partial z_k\partial S_k}g_k+\sigma_6^k(I_k)[\sigma_0^k(z_k(s)-z_k^*)]\frac{\partial^2}{\partial z_k\partial I_k}g_k\right.\right.\\\left.\left.+\sigma_7^k(R_k)[\sigma_0^k(z_k(s)-z_k^*)]\frac{\partial^2}{\partial z_k\partial R_k}g_k\right.\right.\\\left.\left.+[\sigma_0^k(z_k(s)-z_k^*)]\sigma_{10}^k(s,e_k,\omega_k,\omega_l)\frac{\partial^2}{\partial z_k\partial W_k}g_k\right.\right.\\\left.\left.+\sigma_5^k(S_k)\sigma_6^k(I_k)\frac{\partial^2}{\partial S_k\partial I_k}g_k+\sigma_5^k(S_k)\sigma_7^k(R_k)\frac{\partial^2}{\partial S_k\partial R_k}g_k\right.\right.\\\left.\left.+\sigma_5^k(S_k)\sigma_{10}^k(s,e_k,\omega_k,\omega_l)\frac{\partial^2}{\partial S_k\partial W_k}g_k+\sigma_6^k(I_k)\sigma_7^k(R_k)\right.\right.\\\left.\left.\times\frac{\partial^2}{\partial I_k\partial R_k}g_k+\sigma_6^k(I_k)\sigma_{10}^k(s,e_k,\omega_k,\omega_l)\frac{\partial^2}{\partial I_k\partial W_k}g_k\right.\right.\\\left.\left.+\sigma_7^k(R_k)\sigma_{10}^k(s,e_k,\omega_k,\omega_l)\frac{\partial^2}{\partial R_k\partial W_k}g_k\right]\right\}\biggr]\biggm]\\\times\Psi_s^k(z_k,S_k,I_k,R_k,W_k)\times dz_k\times dS_k\times dI_k\times dR_k\times dW_k+o(\varepsilon ^{1/2}).
 \end{multline}
 For $\varepsilon \downarrow 0$ define a new transition probability $\Psi_s^{k\tau}$ centered around time $\tau$. A Taylor series expansion (up to second order) of the left hand side of Equation (\ref{16}) yields,
 \begin{multline*}
 \Psi_s^{k\tau}(z_k,S_k,I_k,R_k,W_k)+\varepsilon\frac{\partial \Psi_s^{k\tau}(z_k,S_k,I_k,R_k,W_k)}{\partial s}+o(\varepsilon)\\=\frac{1}{L_\varepsilon} \int_{\mathbb{R}^{5K}}\exp\biggm[-\varepsilon \biggr[\exp\{-\rho s\}\sum_{k=1}^K\theta_kz_k(s)\left[N_k-\mathcal E_k(s)\right]+\breve\chi_k\check h\varpi_k(s)I_k(s)\\+g_k+\frac{\partial}{\partial s}g_k+\frac{\partial}{\partial z_k}g_k\times[\kappa_0\{1-e_k(s)\}-\kappa_1z_k(s)p(\eta_{k_i},s)]\\+\frac{\partial}{\partial S_k}g_k\mu_1(s,e_k,z_k,S_k,I_k,R_k)+\frac{\partial}{\partial I_k}g_k\mu_2(s,e_k,z_k,S_k,I_k,R_k)\\+\frac{\partial}{\partial R_k}g_k \mu_3(s,e_k,z_k,S_k,I_k,R_k)+\frac{\partial}{\partial W_k}g_k\mu_4(s,e_k,z_k,S_k,I_k,R_k)\\+\frac{1}{2}\left\{[\sigma_0^k(z_k(s)-z_k^*)]^2\frac{\partial^2}{\partial z_k^2}g_k+[\sigma_5^k(S_k)]^2\frac{\partial^2}{\partial S_k^2}g_k\right.\\\left.+[\sigma_6^k(I_k)]^2\frac{\partial^2}{\partial I_k^2}g_k+[\sigma_7^k(R_k)]^2\frac{\partial^2}{\partial R_k^2}g_k\right.\\\left.+[\sigma_{10}^k(s,e_k,\omega_k,\omega_l)]^2\frac{\partial^2}{\partial W_k^2}g_k+2\left[\sigma_5^k(S_k)[\sigma_0^k(z_k(s)-z_k^*)]\right.\right.\\\left.\left.\times\frac{\partial^2}{\partial z_k\partial S_k}g_k+\sigma_6^k(I_k)[\sigma_0^k(z_k(s)-z_k^*)]\frac{\partial^2}{\partial z_k\partial I_k}g_k\right.\right.\\\left.\left.+\sigma_7^k(R_k)[\sigma_0^k(z_k(s)-z_k^*)]\frac{\partial^2}{\partial z_k\partial R_k}g_k\right.\right.\\\left.\left.+[\sigma_0^k(z_k(s)-z_k^*)]\sigma_{10}^k(s,e_k,\omega_k,\omega_l)\frac{\partial^2}{\partial z_k\partial W_k}g_k\right.\right.\\\left.\left.+\sigma_5^k(S_k)\sigma_6^k(I_k)\frac{\partial^2}{\partial S_k\partial I_k}g_k+\sigma_5^k(S_k)\sigma_7^k(R_k)\frac{\partial^2}{\partial S_k\partial R_k}g_k\right.\right.\\\left.\left.+\sigma_5^k(S_k)\sigma_{10}^k(s,e_k,\omega_k,\omega_l)\frac{\partial^2}{\partial S_k\partial W_k}g_k+\sigma_6^k(I_k)\sigma_7^k(R_k)\right.\right.\\\left.\left.\times\frac{\partial^2}{\partial I_k\partial R_k}g_k+\sigma_6^k(I_k)\sigma_{10}^k(s,e_k,\omega_k,\omega_l)\frac{\partial^2}{\partial I_k\partial W_k}g_k\right.\right.\\\left.\left.+\sigma_7^k(R_k)\sigma_{10}^k(s,e_k,\omega_k,\omega_l)\frac{\partial^2}{\partial R_k\partial W_k}g_k\right]\right\}\biggr]\biggm]\\\times\Psi_s^k(z_k,S_k,I_k,R_k,W_k)\times dz_k\times dS_k\times dI_k\times dR_k\times dW_k+o(\varepsilon ^{1/2}),
  \end{multline*}
as $\varepsilon\downarrow 0$. For fixed $s$ and $\tau$ let $z_k(s)=z_k(\tau)+\varsigma_1$, $S_k(s)=S_k(\tau) +\varsigma_2$, $I_k(s)=I_k(\tau)+\varsigma_3$, $R_k(s)=R_k(\tau)+\varsigma_4$ and $W_k(s)=W_k(\tau)+\varsigma_5$. For some finite positive numbers $c_i$ with $i=1,...,5$ assume $|\varsigma_1|\leq\frac{c_1\varepsilon}{z_k(s)}$, $|\varsigma_2|\leq\frac{c_2\varepsilon}{S_k(s)}$, $|\varsigma_3|\leq\frac{c_3\varepsilon}{I_k(s)}$, $|\varsigma_4|\leq\frac{c_4\varepsilon}{R_k(s)}$ and, $|\varsigma_5|\leq\frac{c_5\varepsilon}{W_k(s)}$. Therefore, we get upper bounds of each state variable in this pandemic control model as $z_k(s)\leq c_1\varepsilon/ (\varsigma_1)^2 $, $S_k(s)\leq c_2\varepsilon/ (\varsigma_2)^2 $, $I_k(s)\leq c_3\varepsilon/ (\varsigma_3)^2 $, $R_k(s)\leq c_4\varepsilon/ (\varsigma_4)^2 $ and $W_k(s)\leq c_5\varepsilon/ (\varsigma_5)^2 $. Furthermore, by Fr\"ohlich's Reconstruction Theorem \citep{simon1979,pramanik2020optimization,pramanik2021optparam} and Assumptions \ref{as0}-\ref{as2} imply

\begin{multline}\label{17}
\Psi_s^{k\tau}(z_k,S_k,I_k,R_k,W_k)+\varepsilon\frac{\partial \Psi_s^{k\tau}(z_k,S_k,I_k,R_k,W_k)}{\partial s}+o(\varepsilon)\\=\frac{1}{L_\varepsilon} \int_{\mathbb{R}^{5K}}\left[\Psi_s^{k\tau}(z_k,S_k,I_k,R_k,W_k)+\varsigma_1\frac{\partial \Psi_s^{k\tau}(z_k,S_k,I_k,R_k,W_k)}{\partial z_k}\right.\\\left.+\varsigma_2\frac{\partial \Psi_s^{k\tau}(z_k,S_k,I_k,R_k,W_k)}{\partial S_k}+\varsigma_3\frac{\partial \Psi_s^{k\tau}(z_k,S_k,I_k,R_k,W_k)}{\partial I_k}\right.\\\left.+\varsigma_4\frac{\partial \Psi_s^{k\tau}(z_k,S_k,I_k,R_k,W_k)}{\partial R_k}+\varsigma_5\frac{\partial \Psi_s^{k\tau}(z_k,S_k,I_k,R_k,W_k)}{\partial W_k}+o(\varepsilon)\right]\\\times \exp\biggm[-\varepsilon \biggr[\exp\{-\rho s\}\sum_{k=1}^K\theta_kz_k(s)\left[N_k-\mathcal E_k(s)\right]+\breve\chi_k\check h\varpi_k(s)I_k(s)\\+g_k+\frac{\partial}{\partial s}g_k+\frac{\partial}{\partial z_k}g_k\times[\kappa_0\{1-e_k(s)\}-\kappa_1z_k(s)p(\eta_{k_i},s)]\\+\frac{\partial}{\partial S_k}g_k\mu_1(s,e_k,z_k,S_k,I_k,R_k)+\frac{\partial}{\partial I_k}g_k\mu_2(s,e_k,z_k,S_k,I_k,R_k)\\+\frac{\partial}{\partial R_k}g_k \mu_3(s,e_k,z_k,S_k,I_k,R_k)+\frac{\partial}{\partial W_k}g_k\mu_4(s,e_k,z_k,S_k,I_k,R_k)\\+\frac{1}{2}\left\{[\sigma_0^k(z_k(s)-z_k^*)]^2\frac{\partial^2}{\partial z_k^2}g_k+[\sigma_5^k(S_k)]^2\frac{\partial^2}{\partial S_k^2}g_k\right.\\\left.+[\sigma_6^k(I_k)]^2\frac{\partial^2}{\partial I_k^2}g_k+[\sigma_7^k(R_k)]^2\frac{\partial^2}{\partial R_k^2}g_k\right.\\\left.+[\sigma_{10}^k(s,e_k,\omega_k,\omega_l)]^2\frac{\partial^2}{\partial W_k^2}g_k+2\left[\sigma_5^k(S_k)[\sigma_0^k(z_k(s)-z_k^*)]\right.\right.\\\left.\left.\times\frac{\partial^2}{\partial z_k\partial S_k}g_k+\sigma_6^k(I_k)[\sigma_0^k(z_k(s)-z_k^*)]\frac{\partial^2}{\partial z_k\partial I_k}g_k\right.\right.\\\left.\left.+\sigma_7^k(R_k)[\sigma_0^k(z_k(s)-z_k^*)]\frac{\partial^2}{\partial z_k\partial R_k}g_k\right.\right.\\\left.\left.+[\sigma_0^k(z_k(s)-z_k^*)]\sigma_{10}^k(s,e_k,\omega_k,\omega_l)\frac{\partial^2}{\partial z_k\partial W_k}g_k\right.\right.\\\left.\left.+\sigma_5^k(S_k)\sigma_6^k(I_k)\frac{\partial^2}{\partial S_k\partial I_k}g_k+\sigma_5^k(S_k)\sigma_7^k(R_k)\frac{\partial^2}{\partial S_k\partial R_k}g_k\right.\right.\\\left.\left.+\sigma_5^k(S_k)\sigma_{10}^k(s,e_k,\omega_k,\omega_l)\frac{\partial^2}{\partial S_k\partial W_k}g_k+\sigma_6^k(I_k)\sigma_7^k(R_k)\right.\right.\\\left.\left.\times\frac{\partial^2}{\partial I_k\partial R_k}g_k+\sigma_6^k(I_k)\sigma_{10}^k(s,e_k,\omega_k,\omega_l)\frac{\partial^2}{\partial I_k\partial W_k}g_k\right.\right.\\\left.\left.+\sigma_7^k(R_k)\sigma_{10}^k(s,e_k,\omega_k,\omega_l)\frac{\partial^2}{\partial R_k\partial W_k}g_k\right]\right\}\biggr]\biggm]\\\times\Psi_s^k(z_k,S_k,I_k,R_k,W_k)\times dz_k\times dS_k\times dI_k\times dR_k\times dW_k+o(\varepsilon ^{1/2}),
\end{multline}
as $\varepsilon\downarrow 0$. For risk-group $k\in\{1,2,...,K\}$ define a function 
\begin{multline*}
\tilde f_k(s,e_k,z_k,S_k,I_k,R_k,W_k)=\exp\{-\rho s\}\sum_{k=1}^K\theta_kz_k(s)\left[N_k-\mathcal E_k(s)\right]+\breve\chi_k\check h\varpi_k(s)I_k(s)\\+g_k+\frac{\partial}{\partial s}g_k+\frac{\partial}{\partial z_k}g_k\times[\kappa_0\{1-e_k(s)\}-\kappa_1z_k(s)p(\eta_{k_i},s)]\\+\frac{\partial}{\partial S_k}g_k\mu_1(s,e_k,z_k,S_k,I_k,R_k)+\frac{\partial}{\partial I_k}g_k\mu_2(s,e_k,z_k,S_k,I_k,R_k)\\+\frac{\partial}{\partial R_k}g_k \mu_3(s,e_k,z_k,S_k,I_k,R_k)+\frac{\partial}{\partial W_k}g_k\mu_4(s,e_k,z_k,S_k,I_k,R_k)\\+\frac{1}{2}\left\{[\sigma_0^k(z_k(s)-z_k^*)]^2\frac{\partial^2}{\partial z_k^2}g_k+[\sigma_5^k(S_k)]^2\frac{\partial^2}{\partial S_k^2}g_k\right.\\\left.+[\sigma_6^k(I_k)]^2\frac{\partial^2}{\partial I_k^2}g_k+[\sigma_7^k(R_k)]^2\frac{\partial^2}{\partial R_k^2}g_k\right.\\\left.+[\sigma_{10}^k(s,e_k,\omega_k,\omega_l)]^2\frac{\partial^2}{\partial W_k^2}g_k+2\left[\sigma_5^k(S_k)[\sigma_0^k(z_k(s)-z_k^*)]\right.\right.\\\left.\left.\times\frac{\partial^2}{\partial z_k\partial S_k}g_k+\sigma_6^k(I_k)[\sigma_0^k(z_k(s)-z_k^*)]\frac{\partial^2}{\partial z_k\partial I_k}g_k\right.\right.\\\left.\left.+\sigma_7^k(R_k)[\sigma_0^k(z_k(s)-z_k^*)]\frac{\partial^2}{\partial z_k\partial R_k}g_k\right.\right.\\\left.\left.+[\sigma_0^k(z_k(s)-z_k^*)]\sigma_{10}^k(s,e_k,\omega_k,\omega_l)\frac{\partial^2}{\partial z_k\partial W_k}g_k\right.\right.\\\left.\left.+\sigma_5^k(S_k)\sigma_6^k(I_k)\frac{\partial^2}{\partial S_k\partial I_k}g_k+\sigma_5^k(S_k)\sigma_7^k(R_k)\frac{\partial^2}{\partial S_k\partial R_k}g_k\right.\right.\\\left.\left.+\sigma_5^k(S_k)\sigma_{10}^k(s,e_k,\omega_k,\omega_l)\frac{\partial^2}{\partial S_k\partial W_k}g_k+\sigma_6^k(I_k)\sigma_7^k(R_k)\right.\right.\\\left.\left.\times\frac{\partial^2}{\partial I_k\partial R_k}g_k+\sigma_6^k(I_k)\sigma_{10}^k(s,e_k,\omega_k,\omega_l)\frac{\partial^2}{\partial I_k\partial W_k}g_k\right.\right.\\\left.\left.+\sigma_7^k(R_k)\sigma_{10}^k(s,e_k,\omega_k,\omega_l)\frac{\partial^2}{\partial R_k\partial W_k}g_k\right]\right\}.
\end{multline*}

Therefore, after using the function $\tilde f(s,e_k,z_k,S_k,I_k,R_k,W_k)$ Equation (\ref{17}) yields,
\begin{multline*}
\Psi_s^{k\tau}(z_k,S_k,I_k,R_k,W_k)+\varepsilon\frac{\partial \Psi_s^{k\tau}(z_k,S_k,I_k,R_k,W_k)}{\partial s}+o(\varepsilon)\\=\frac{1}{L_\varepsilon}\Psi_s^{k\tau}(z_k,S_k,I_k,R_k,W_k)\int_{\mathbb R^{5K}}\exp\left\{-\varepsilon\tilde f_k(s,e_k,\varsigma_1,\varsigma_2,\varsigma_3,\varsigma_4,\varsigma_5)\right\}\prod_{i=1}^5d\varsigma_i\\+\frac{1}{L_\varepsilon}\frac{\partial \Psi_s^{k\tau}(z_k,S_k,I_k,R_k,W_k)}{\partial z_k}\int_{\mathbb R^{5K}}\varsigma_1\exp\left\{-\varepsilon\tilde f_k(s,e_k,\varsigma_1,\varsigma_2,\varsigma_3,\varsigma_4,\varsigma_5)\right\}\prod_{i=1}^5d\varsigma_i\\+\frac{1}{L_\varepsilon}\frac{\partial \Psi_s^{k\tau}(z_k,S_k,I_k,R_k,W_k)}{\partial S_k}\int_{\mathbb R^{5K}}\varsigma_2\exp\left\{-\varepsilon\tilde f_k(s,e_k,\varsigma_1,\varsigma_2,\varsigma_3,\varsigma_4,\varsigma_5)\right\}\prod_{i=1}^5d\varsigma_i\\+\frac{1}{L_\varepsilon}\frac{\partial \Psi_s^{k\tau}(z_k,S_k,I_k,R_k,W_k)}{\partial I_k}\int_{\mathbb R^{5K}}\varsigma_3\exp\left\{-\varepsilon\tilde f_k(s,e_k,\varsigma_1,\varsigma_2,\varsigma_3,\varsigma_4,\varsigma_5)\right\}\prod_{i=1}^5d\varsigma_i\\+\frac{1}{L_\varepsilon}\frac{\partial \Psi_s^{k\tau}(z_k,S_k,I_k,R_k,W_k)}{\partial R_k}\int_{\mathbb R^{5K}}\varsigma_4\exp\left\{-\varepsilon\tilde f_k(s,e_k,\varsigma_4,\varsigma_2,\varsigma_3,\varsigma_4,\varsigma_5)\right\}\prod_{i=1}^5d\varsigma_i\\+\frac{1}{L_\varepsilon}\frac{\partial \Psi_s^{k\tau}(z_k,S_k,I_k,R_k,W_k)}{\partial W_k}\int_{\mathbb R^{5K}}\varsigma_5\exp\left\{-\varepsilon\tilde f_k(s,e_k,\varsigma_1,\varsigma_2,\varsigma_3,\varsigma_4,\varsigma_5)\right\}\prod_{i=1}^5d\varsigma_i+o(\varepsilon^{1/2}).
\end{multline*}
Consider $f_k(s,e_k,\varsigma_1,\varsigma_2,\varsigma_3,\varsigma_4,\varsigma_5)$ is a $C^2$-function, then doing the Taylor series expansion up to second order yields
\begin{multline*}
\tilde f_k(s,e_k(s),\varsigma_1,\varsigma_2,\varsigma_3,\varsigma_4,\varsigma_5)=\tilde f_k(s,e_k(s),\varsigma_1(\tau),\varsigma_2(\tau),\varsigma_3(\tau),\varsigma_4(\tau),\varsigma_5(\tau))\\+[\varsigma_1-z_k(\tau)]\frac{\partial }{\partial z_k}\tilde f_k(s,e_k(s),\varsigma_1(\tau),\varsigma_2(\tau),\varsigma_3(\tau),\varsigma_4(\tau),\varsigma_5(\tau))\\+[\varsigma_2-S_k(\tau)]\frac{\partial }{\partial S_k}\tilde f_k(s,e_k(s),\varsigma_1(\tau),\varsigma_2(\tau),\varsigma_3(\tau),\varsigma_4(\tau),\varsigma_5(\tau))\\+[\varsigma_3-I_k(\tau)]\frac{\partial }{\partial I_k}\tilde f_k(s,e_k(s),\varsigma_1(\tau),\varsigma_2(\tau),\varsigma_3(\tau),\varsigma_4(\tau),\varsigma_5(\tau))\\+[\varsigma_4-R_k(\tau)]\frac{\partial }{\partial R_k}\tilde f_k(s,e_k(s),\varsigma_1(\tau),\varsigma_2(\tau),\varsigma_3(\tau),\varsigma_4(\tau),\varsigma_5(\tau))\\+[\varsigma_5-W_k(\tau)]\frac{\partial }{\partial W_k}\tilde f_k(s,e_k(s),\varsigma_1(\tau),\varsigma_2(\tau),\varsigma_3(\tau),\varsigma_4(\tau),\varsigma_5(\tau))\\+\mbox{$\frac{1}{2}$}(\Xi_1+2\Xi_2)+o(\varepsilon),
\end{multline*}
where
\begin{multline*}
\Xi_1=[\varsigma_1-z_k(\tau)]^2\frac{\partial^2 }{\partial z_k^2}\tilde f_k(s,e_k(s),\varsigma_1(\tau),\varsigma_2(\tau),\varsigma_3(\tau),\varsigma_4(\tau),\varsigma_5(\tau))\\+[\varsigma_2-S_k(\tau)]^2\frac{\partial^2 }{\partial S_k^2}\tilde f_k(s,e_k(s),\varsigma_1(\tau),\varsigma_2(\tau),\varsigma_3(\tau),\varsigma_4(\tau),\varsigma_5(\tau))\\+[\varsigma_3-I_k(\tau)]^2\frac{\partial^2 }{\partial I_k^2}\tilde f_k(s,e_k(s),\varsigma_1(\tau),\varsigma_2(\tau),\varsigma_3(\tau),\varsigma_4(\tau),\varsigma_5(\tau))\\+[\varsigma_4-R_k(\tau)]^2\frac{\partial^2 }{\partial R_k^2}\tilde f_k(s,e_k(s),\varsigma_1(\tau),\varsigma_2(\tau),\varsigma_3(\tau),\varsigma_4(\tau),\varsigma_5(\tau))\\+[\varsigma_5-W_k(\tau)]^2\frac{\partial^2 }{\partial W_k^2}\tilde f_k(s,e_k(s),\varsigma_1(\tau),\varsigma_2(\tau),\varsigma_3(\tau),\varsigma_4(\tau),\varsigma_5(\tau)),
\end{multline*}
and,
\begin{multline*}
\Xi_2=[\varsigma_1-z_k(\tau)][\varsigma_2-S_k(\tau)]\frac{\partial^2 }{\partial z_k\partial S_k}\tilde f_k(s,e_k(s),\varsigma_1(\tau),\varsigma_2(\tau),\varsigma_3(\tau),\varsigma_4(\tau),\varsigma_5(\tau))\\+[\varsigma_1-z_k(\tau)][\varsigma_3-I_k(\tau)]\frac{\partial^2 }{\partial z_k\partial I_k}\tilde f_k(s,e_k(s),\varsigma_1(\tau),\varsigma_2(\tau),\varsigma_3(\tau),\varsigma_4(\tau),\varsigma_5(\tau))\\+[\varsigma_1-z_k(\tau)][\varsigma_4-R_k(\tau)]\frac{\partial^2 }{\partial z_k\partial R_k}\tilde f_k(s,e_k(s),\varsigma_1(\tau),\varsigma_2(\tau),\varsigma_3(\tau),\varsigma_4(\tau),\varsigma_5(\tau))\\+[\varsigma_1-z_k(\tau)][\varsigma_5-W_k(\tau)]\frac{\partial^2 }{\partial z_k\partial W_k}\tilde f_k(s,e_k(s),\varsigma_1(\tau),\varsigma_2(\tau),\varsigma_3(\tau),\varsigma_4(\tau),\varsigma_5(\tau))\\+[\varsigma_2-S_k(\tau)][\varsigma_3-I_k(\tau)]\frac{\partial^2 }{\partial S_k\partial I_k}\tilde f_k(s,e_k(s),\varsigma_1(\tau),\varsigma_2(\tau),\varsigma_3(\tau),\varsigma_4(\tau),\varsigma_5(\tau))\\+[\varsigma_2-S_k(\tau)][\varsigma_4-R_k(\tau)]\frac{\partial^2 }{\partial S_k\partial R_k}\tilde f_k(s,e_k(s),\varsigma_1(\tau),\varsigma_2(\tau),\varsigma_3(\tau),\varsigma_4(\tau),\varsigma_5(\tau))\\+[\varsigma_2-S_k(\tau)][\varsigma_5-W_k(\tau)]\frac{\partial^2 }{\partial S_k\partial W_k}\tilde f_k(s,e_k(s),\varsigma_1(\tau),\varsigma_2(\tau),\varsigma_3(\tau),\varsigma_4(\tau),\varsigma_5(\tau))\\+[\varsigma_3-I_k(\tau)][\varsigma_4-R_k(\tau)]\frac{\partial^2 }{\partial I_k\partial R_k}\tilde f_k(s,e_k(s),\varsigma_1(\tau),\varsigma_2(\tau),\varsigma_3(\tau),\varsigma_4(\tau),\varsigma_5(\tau))\\+[\varsigma_3-I_k(\tau)][\varsigma_5-W_k(\tau)]\frac{\partial^2 }{\partial I_k\partial W_k}\tilde f_k(s,e_k(s),\varsigma_1(\tau),\varsigma_2(\tau),\varsigma_3(\tau),\varsigma_4(\tau),\varsigma_5(\tau))\\+[\varsigma_4-R_k(\tau)][\varsigma_5-W_k(\tau)]\frac{\partial^2 }{\partial R_k\partial W_k}\tilde f_k(s,e_k(s),\varsigma_1(\tau),\varsigma_2(\tau),\varsigma_3(\tau),\varsigma_4(\tau),\varsigma_5(\tau)),
\end{multline*}
as $\varepsilon\downarrow 0$ and $\Delta e_k(s)\downarrow 0$. Define $\breve m_1=\varsigma_1-z_k$, $\breve m_2=\varsigma_2-S_k$, $\breve m_3=\varsigma_3-I_k$, $\breve m_4=\varsigma_4-R_k$ and, $\breve m_5=\varsigma_5-W_k$ such that $d\breve m_i=d\varsigma_i$ for all $i=\{1,...,5\}$. Therefore, after denoting $\tilde f_k(s,e_k(s),\varsigma_1(\tau),\varsigma_2(\tau),\varsigma_3(\tau),\varsigma_4(\tau),\varsigma_5(\tau))=\tilde f_k$ above expression becomes
\begin{multline}\label{18}
\int_{\mathbb R^{5K}}\exp\left\{-\varepsilon \tilde f_k(s,e_k,S_k,I_k,R_k,W_k)\right\}\prod_{i=1}^5d\varsigma_i\\=\int_{\mathbb R^{5K}}\biggr\{-\varepsilon\left[\tilde f_k+\breve m_1\frac{\partial \tilde f_k}{\partial z_k}+\breve m_2\frac{\partial \tilde f_k}{\partial S_k}+\breve m_3\frac{\partial \tilde f_k}{\partial I_k}+\breve m_4\frac{\partial \tilde f_k}{\partial R_k}+\breve m_5\frac{\partial \tilde f_k}{\partial W_k}\right.\\\left.+\mbox{$\frac{1}{2}$}\left(\breve m_1^2\frac{\partial^2\tilde f_k}{\partial z_k^2}+\breve m_2^2\frac{\partial^2\tilde f_k}{\partial S_k^2}+\breve m_3^2\frac{\partial^2\tilde f_k}{\partial I_k^2}+\breve m_4^2\frac{\partial^2\tilde f_k}{\partial R_k^2}+\breve m_5^2\frac{\partial^2\tilde f_k}{\partial W_k^2}\right.\right.\\\left.\left.+2\left[\breve m_1\breve m_2\frac{\partial^2\tilde f_k}{\partial z_k\partial S_k}+\breve m_1\breve m_3\frac{\partial^2\tilde f_k}{\partial z_k\partial I_k}+\breve m_1\breve m_4\frac{\partial^2\tilde f_k}{\partial z_k\partial R_k}+\breve m_1\breve m_5\frac{\partial^2\tilde f_k}{\partial z_k\partial W_k}\right.\right.\right.\\\left.\left.\left.+\breve m_2\breve m_3\frac{\partial^2\tilde f_k}{\partial S_k\partial I_k}+\breve m_2\breve m_4\frac{\partial^2\tilde f_k}{\partial S_k\partial R_k}+\breve m_2\breve m_5\frac{\partial^2\tilde f_k}{\partial S_k\partial W_k}+\breve m_3\breve m_4\frac{\partial^2\tilde f_k}{\partial I_k\partial R_k}\right.\right.\right.\\\left.\left.\left.+\breve m_3\breve m_5\frac{\partial^2\tilde f_k}{\partial I_k\partial W_k}+\breve m_4\breve m_5\frac{\partial^2\tilde f_k}{\partial R_k\partial W_k}\right]\right)\right]\biggr\}\prod_{i=1}^5 d\varsigma_i.
\end{multline}
Let
\[
\bm\Theta_k=\mbox{$\frac{1}{2}$}\begin{bmatrix}
\frac{\partial^2\tilde f_k}{\partial z_k^2} & \frac{\partial^2\tilde f_k}{\partial z_k\partial S_k} & \frac{\partial^2\tilde f_k}{\partial z_k\partial I_k}&\frac{\partial^2\tilde f_k}{\partial z_k\partial R_k}& \frac{\partial^2\tilde f_k}{\partial z_k\partial W_k}\\
\frac{\partial^2\tilde f_k}{\partial S_k\partial z_k}&\frac{\partial^2\tilde f_k}{\partial S_k^2}&\frac{\partial^2\tilde f_k}{\partial S_k\partial I_k}&\frac{\partial^2\tilde f_k}{\partial S_k\partial R_k}&\frac{\partial^2\tilde f_k}{\partial S_k\partial W_k}\\
\frac{\partial^2\tilde f_k}{\partial I_k\partial z_k}& \frac{\partial^2\tilde f_k}{\partial I_k\partial S_k}&\frac{\partial^2\tilde f_k}{\partial I_k^2}&\frac{\partial^2\tilde f_k}{\partial I_k\partial R_k}&\frac{\partial^2\tilde f_k}{\partial I_k\partial W_k}\\
\frac{\partial^2\tilde f_k}{\partial R_k\partial z_k}&\frac{\partial^2\tilde f_k}{\partial R_k\partial S_k}&\frac{\partial^2\tilde f_k}{\partial R_k\partial I_k}&\frac{\partial^2\tilde f_k}{\partial R_k^2}&\frac{\partial^2\tilde f_k}{\partial R_k\partial W_k}\\
\frac{\partial^2\tilde f_k}{\partial W_k\partial z_k}&\frac{\partial^2\tilde f_k}{\partial W_k\partial S_k}&\frac{\partial^2\tilde f_k}{\partial W_k\partial I_k}& \frac{\partial^2\tilde f_k}{\partial W_k\partial R_k}&\frac{\partial^2\tilde f_k}{\partial W_k^2}
\end{bmatrix},
\]
and
\[
{\breve {\mathbf m_k}}=\begin{bmatrix}
\breve m_1\\\breve m_2\\\breve m_3\\\breve m_4\\\breve m_5
\end{bmatrix},
\]
and
\[
-\mathbf J_k=\begin{bmatrix}
\frac{\partial}{\partial z_k}\tilde f_k\\\frac{\partial}{\partial S_k}\tilde f_k\\\frac{\partial}{\partial I_k}\tilde f_k\\\frac{\partial}{\partial R_k}\tilde f_k\\\frac{\partial}{\partial W_k}\tilde f_k
\end{bmatrix},
\]
where the symmetric matrix $\bm\Theta_k$ is assumed to be positive semi-definite. The integrand in Equation (\ref{18}) becomes a shifted Gaussian integral,
\begin{align*}
& \int_{\mathbb{R}^{5K}}\exp \bigg\{-\epsilon \left(\tilde f_k- \mathbf J_k^T {\breve {\mathbf m_k}}+{\breve {\mathbf m_k}}^T \bm\Theta_k {\breve {\mathbf m_k}}\right)\bigg\} d\breve m_k\\&=\exp\left(-\epsilon \tilde f_k\right)\int_{\mathbb{R}^{5K}}\exp\bigg\{(\varepsilon \mathbf J_k^T ){\breve {\mathbf m_k}}-{\breve {\mathbf m_k}}^T(\varepsilon \bm\Theta_k){\breve {\mathbf m_k}}\bigg\} \\&=\frac{\pi}{\sqrt{\varepsilon | \bm\Theta_k|}}\exp\left[\frac{\varepsilon }{4}\mathbf J_k^T\left(\bm\Theta_k\right)^{-1}\mathbf J_k-\varepsilon \tilde f_k\right],
\end{align*}
where $\mathbf J_k^T$ is the transposition of $\mathbf J_k$, ${\breve {\mathbf m_k}}^T$ is the transposition of ${\breve {\mathbf m_k}}$ and $\left(\bm\Theta_k\right)^{-1}$ is the inverse of $\bm\Theta_k$. Hence,
\begin{align}\label{19}
& \frac{1}{L_\varepsilon}\Psi_s^{k\tau}\int_{\mathbb R^{5K}}\exp\{-\varepsilon\tilde f_k\}\prod_{i=1}^5d\varsigma_i\notag\\&=\frac{1}{L_\varepsilon}\Psi_s^{k\tau}\frac{\pi}{\sqrt{\varepsilon | \bm\Theta_k|}}\exp\left[\frac{\varepsilon }{4}\mathbf J_k^T\left(\bm\Theta_k\right)^{-1}\mathbf J_k-\varepsilon \tilde f_k\right],
\end{align}
such that the inverse matrix $\left(\bm\Theta_k\right)^{-1}>0$ exists. Similarly,

\begin{align}\label{20}
&\frac{1}{L_\varepsilon}\frac{\partial\Psi_s^{k\tau}}{\partial z_k}\int_{\mathbb R^{5K}}\varsigma_1\exp\{-\varepsilon\tilde f_k\}\prod_{i=1}^5d\varsigma_i\notag\\&\hspace{2cm}=\frac{1}{L_\varepsilon}\frac{\partial\Psi_s^{k\tau}}{\partial z_k}\frac{\pi}{\sqrt{\varepsilon | \bm\Theta_k|}}\left[\frac{1}{2}\left(\bm\Theta_k\right)^{-1}+z_k\right]\exp\left[\frac{\varepsilon }{4}\mathbf J_k^T\left(\bm\Theta_k\right)^{-1}\mathbf J_k-\varepsilon \tilde f_k\right],\notag\\
&\frac{1}{L_\varepsilon}\frac{\partial\Psi_s^{k\tau}}{\partial S_k}\int_{\mathbb R^{5K}}\varsigma_2\exp\{-\varepsilon\tilde f_k\}\prod_{i=1}^5d\varsigma_i\notag\\&\hspace{2cm}=\frac{1}{L_\varepsilon}\frac{\partial\Psi_s^{k\tau}}{\partial S_k}\frac{\pi}{\sqrt{\varepsilon | \bm\Theta_k|}}\left[\frac{1}{2}\left(\bm\Theta_k\right)^{-1}+S_k\right]\exp\left[\frac{\varepsilon }{4}\mathbf J_k^T\left(\bm\Theta_k\right)^{-1}\mathbf J_k-\varepsilon \tilde f_k\right],\notag\\
&\frac{1}{L_\varepsilon}\frac{\partial\Psi_s^{k\tau}}{\partial I_k}\int_{\mathbb R^{5K}}\varsigma_3\exp\{-\varepsilon\tilde f_k\}\prod_{i=1}^5d\varsigma_i\notag\\&\hspace{2cm}=\frac{1}{L_\varepsilon}\frac{\partial\Psi_s^{k\tau}}{\partial I_k}\frac{\pi}{\sqrt{\varepsilon | \bm\Theta_k|}}\left[\frac{1}{2}\left(\bm\Theta_k\right)^{-1}+I_k\right]\exp\left[\frac{\varepsilon }{4}\mathbf J_k^T\left(\bm\Theta_k\right)^{-1}\mathbf J_k-\varepsilon \tilde f_k\right],\notag\\
&\frac{1}{L_\varepsilon}\frac{\partial\Psi_s^{k\tau}}{\partial R_k}\int_{\mathbb R^{5K}}\varsigma_4\exp\{-\varepsilon\tilde f_k\}\prod_{i=1}^5d\varsigma_i\notag\\&\hspace{2cm}=\frac{1}{L_\varepsilon}\frac{\partial\Psi_s^{k\tau}}{\partial R_k}\frac{\pi}{\sqrt{\varepsilon | \bm\Theta_k|}}\left[\frac{1}{2}\left(\bm\Theta_k\right)^{-1}+R_k\right]\exp\left[\frac{\varepsilon }{4}\mathbf J_k^T\left(\bm\Theta_k\right)^{-1}\mathbf J_k-\varepsilon \tilde f_k\right],\notag\\
&\frac{1}{L_\varepsilon}\frac{\partial\Psi_s^{k\tau}}{\partial W_k}\int_{\mathbb R^{5K}}\varsigma_5\exp\{-\varepsilon\tilde f_k\}\prod_{i=1}^5d\varsigma_i\notag\\&\hspace{2cm}=\frac{1}{L_\varepsilon}\frac{\partial\Psi_s^{k\tau}}{\partial W_k}\frac{\pi}{\sqrt{\varepsilon | \bm\Theta_k|}}\left[\frac{1}{2}\left(\bm\Theta_k\right)^{-1}+W_k\right]\exp\left[\frac{\varepsilon }{4}\mathbf J_k^T\left(\bm\Theta_k\right)^{-1}\mathbf J_k-\varepsilon \tilde f_k\right].
\end{align}
The system of equations expressed in (\ref{19}) through (\ref{20}) implies that the Wick-rotated Schr\"odinger type equation or the Fokker-Plank type equation is,
\begin{multline*}
\Psi_s^{k\tau}(z_k,S_k,I_k,R_k,W_k)+\varepsilon\frac{\partial \Psi_s^{k\tau}(z_k,S_k,I_k,R_k,W_k)}{\partial s}+o(\varepsilon)\\=\frac{1}{L_\varepsilon}\frac{\pi}{\sqrt{\varepsilon | \bm\Theta_k|}}\exp\left[\frac{\varepsilon }{4}\mathbf J_k^T\left(\bm\Theta_k\right)^{-1}\mathbf J_k-\varepsilon \tilde f_k\right]\biggr\{\Psi_s^{k\tau}(z_k,S_k,I_k,R_k,W_k)\\
+\left[\frac{1}{2}\left(\bm\Theta_k\right)^{-1}+z_k\right]\frac{\partial\Psi_s^{k\tau}(z_k,S_k,I_k,R_k,W_k)}{\partial z_k}\\
+\left[\frac{1}{2}\left(\bm\Theta_k\right)^{-1}+S_k\right]\frac{\partial\Psi_s^{k\tau}(z_k,S_k,I_k,R_k,W_k)}{\partial S_k}\\
+\left[\frac{1}{2}\left(\bm\Theta_k\right)^{-1}+I_k\right]\frac{\partial\Psi_s^{k\tau}(z_k,S_k,I_k,R_k,W_k)}{\partial I_k}\\
+\left[\frac{1}{2}\left(\bm\Theta_k\right)^{-1}+R_k\right]\frac{\partial\Psi_s^{k\tau}(z_k,S_k,I_k,R_k,W_k)}{\partial R_k}\\
+\left[\frac{1}{2}\left(\bm\Theta_k\right)^{-1}+W_k\right]\frac{\partial\Psi_s^{k\tau}(z_k,S_k,I_k,R_k,W_k)}{\partial W_k}\biggr\}+o(\varepsilon^{1/2}),
\end{multline*}
as $\varepsilon\downarrow 0$. Assuming $L_\varepsilon=\pi/\sqrt{\varepsilon | \bm\Theta_k|}>0$ yields,
\begin{multline*}
\Psi_s^{k\tau}(z_k,S_k,I_k,R_k,W_k)+\varepsilon\frac{\partial \Psi_s^{k\tau}(z_k,S_k,I_k,R_k,W_k)}{\partial s}+o(\varepsilon)\\=\left[1+\varepsilon\left(\frac{1 }{4}\mathbf J_k^T\left(\bm\Theta_k\right)^{-1}\mathbf J_k-\varepsilon \tilde f_k\right)\right]\biggr\{\Psi_s^{k\tau}(z_k,S_k,I_k,R_k,W_k)\\
+\left[\frac{1}{2}\left(\bm\Theta_k\right)^{-1}+z_k\right]\frac{\partial\Psi_s^{k\tau}(z_k,S_k,I_k,R_k,W_k)}{\partial z_k}\\
+\left[\frac{1}{2}\left(\bm\Theta_k\right)^{-1}+S_k\right]\frac{\partial\Psi_s^{k\tau}(z_k,S_k,I_k,R_k,W_k)}{\partial S_k}\\
+\left[\frac{1}{2}\left(\bm\Theta_k\right)^{-1}+I_k\right]\frac{\partial\Psi_s^{k\tau}(z_k,S_k,I_k,R_k,W_k)}{\partial I_k}\\
+\left[\frac{1}{2}\left(\bm\Theta_k\right)^{-1}+R_k\right]\frac{\partial\Psi_s^{k\tau}(z_k,S_k,I_k,R_k,W_k)}{\partial R_k}\\
+\left[\frac{1}{2}\left(\bm\Theta_k\right)^{-1}+W_k\right]\frac{\partial\Psi_s^{k\tau}(z_k,S_k,I_k,R_k,W_k)}{\partial W_k}\biggr\}+o(\varepsilon^{1/2}),
\end{multline*}
as $\varepsilon\downarrow 0$. As $z_k\leq\varepsilon/c_1\varsigma_1^2$, assume $|\bm\Theta_k^{-1}|\leq 2c_1\varepsilon(1-\varsigma_1^{-1})$ such that $|(2\bm\Theta_k)^{-1}+z_k|\leq c_1\varepsilon$. In the similar fashion we assume $|(2\bm\Theta_k)^{-1}+S_k|\leq c_2\varepsilon$, $|(2\bm\Theta_k)^{-1}+I_k|\leq c_3\varepsilon$, $|(2\bm\Theta_k)^{-1}+R_k|\leq c_4\varepsilon$ and $|(2\bm\Theta_k)^{-1}+W_k|\leq c_5\varepsilon$. Therefore, $|\bm\Theta_k^{-1}|\leq2\varepsilon\min \left\{c_1(1-\varsigma_1^{-1}),c_2(1-\varsigma_2^{-1}),c_3(1-\varsigma_3^{-1}),c_4(1-\varsigma_4^{-1}),c_5(1-\varsigma_5^{-1})\right\}$ such that $|(2\bm\Theta_k)^{-1}+z_k|\downarrow 0$, $|(2\bm\Theta_k)^{-1}+S_k|\downarrow 0$, $|(2\bm\Theta_k)^{-1}+I_k|\downarrow 0$, $|(2\bm\Theta_k)^{-1}+R_k|\downarrow 0$ and $|(2\bm\Theta_k)^{-1}+W_k|\downarrow 0$. Hence,
\begin{multline*}
\Psi_s^{k\tau}(z_k,S_k,I_k,R_k,W_k)+\varepsilon\frac{\partial \Psi_s^{k\tau}(z_k,S_k,I_k,R_k,W_k)}{\partial s}+o(\varepsilon)\\=(1-\varepsilon)\Psi_s^{k\tau}(z_k,S_k,I_k,R_k,W_k)+o(\varepsilon^{1/2}).
\end{multline*}
Therefore the Fokker-Plank type equation of this pandemic system is,
\begin{equation*}
\frac{\partial \Psi_s^{k\tau}(z_k,S_k,I_k,R_k,W_k)}{\partial s}=-\tilde f_k\times \Psi_s^{k\tau}(z_k,S_k,I_k,R_k,W_k).
\end{equation*}
Finally, the solution of
\begin{equation}\label{21}
-\frac{\partial }{\partial e_k}\tilde f_k[s,e_k(s),\varsigma_1(\tau),\varsigma_2(\tau),\varsigma_3(\tau),\varsigma_4(\tau),\varsigma_5(\tau)]\Psi_s^{k\tau}(z_k,S_k,I_k,R_k,W_k)=0,
\end{equation}
is an optimal ``lock down" intensity of risk-group $k$. Moreover, as $\varsigma_1=z_k(s)-z_k(\tau)$, $\varsigma_2=S_k(s)-S_k(\tau)$, $\varsigma_3=I_k(s)-I_k(\tau)$, $\varsigma_4=R_k(s)-R_k(\tau)$ and $\varsigma_5=W_k(s)-W_k(\tau)$ for all $\varepsilon\downarrow 0$, in Equation (\ref{21}), $\varsigma_i$ for all $i=\{1,...,5\}$ can be replaced by our original state variables. As the transition function $\Psi_s^{k\tau}(z_k,S_k,I_k,R_k,W_k)$ is a solution of the Equation (\ref{21}), the result follows.
 \end{proof}

Theorem \ref{t0} gives the solution of an optimal ``lock-down" intensity for a generalized stochastic pandemic system. Consider a function $$g_k(s,z_k,S_k,I_k,R_k,W_k)\in C^2([0,t]\times\mathbb R^{5K})$$ such that
\begin{multline*}
g_k(s,z_k,S_k,I_k,R_k,W_k)=[sz_k-1-\ln(z_k)]+[sS_k-1-\ln(S_k)]+[sI_k-1-\ln(I_k)]\\
+[sR_k-1-\ln(R_k)]+[sW_k-1-\ln(W_k)],
\end{multline*}
with $\partial g_k/\partial s=z_k+S_k+I_k+R_k+W_k$, $\partial g_k/\partial X_i=s-1/X_i$, $\partial^2 g_k/\partial X_i^2=-1/X_i^2$ and $\partial^2 g_k/\partial X_i\partial X_j=0$, for all $i\neq j$ where $X_i$ is $i^{th}$ state variable for all $i=1,...,5$ and $\ln$ stands for natural logarithm. In other words, $X_1=z_k, X_2=S_k,X_3=I_k, X_4=R_k$ and $X_5=W_k$. Therefore,
\begin{multline*}
\tilde f_k(s,e_k,z_k,S_k,I_k,R_k,W_k)=\exp\{-\rho s\}\sum_{k=1}^K\theta_kz_k(s)\left[N_k-e_k{\tilde{\mathcal A}_k}\right]+\breve\chi_k\check h\varpi_kI_k\\+[sz_k-1-\ln(z_k)]+[sS_k-1-\ln(S_k)]+[sI_k-1-\ln(I_k)]
+[sR_k-1-\ln(R_k)]\\+[sW_k-1-\ln(W_k)]+(z_k+S_k+I_k+R_k+W_k)+\left(s-\frac{1}{z_k}\right)\\\times [\kappa_0(1-e_k)-\kappa_1z_kp(\eta_{k_i},s)]+\left(s-\frac{1}{S_k}\right)\biggr\{\eta N_k-\be^k(e_k,z_k)\frac{S_kI_k}{1+rI_k+\eta N_k}-\tau S_k\\
+\zeta R_k\biggr\}+\left(s-\frac{1}{I_k}\right)\left\{\be^k(e_k,z_k)\frac{S_kI_k}{1+rI_k+\eta N_k}-(\mu+\tau)I_k\right\}+\left(s-\frac{1}{R_k}\right)\\
\times \left[\mu I_k-(\tau +\zeta)e_k R_k\right]+\left(s-\frac{1}{W_k}\right)\left\{\omega_k-\varkappa e_k \mathcal Q(|\omega_k|)\left[\omega_k-\omega_l\right]\right\}\\
-\mbox{$\frac{1}{2}$}\left\{\sigma_0^k(z_k-z_k^*)\frac{1}{z_k^2}+\sigma_2^k(S_k-S_k^*)\frac{1}{S_k^2}+\sigma_3^k(I_k-I_k^*)\frac{1}{I_k^2}+\sigma_4^k(R_k-R_k^*)\frac{1}{R_k^2}\right.\\\left. +\sigma_8^k(\omega_k-\omega_l)\frac{1}{W_k^2}\right\},
\end{multline*}
where
\[
\tilde{\mathcal A}_k=S_k+\left[1-\check h\hat\tau_k\hat F_p-(1-\check h)\tau_k F(p_0)\right]I_k+(1-\breve\tau_k)\tilde\tau_kR_k.
\]
In order to satisfy Equation (\ref{22}) Either $\frac{\partial \tilde f_k}{\partial e_k}=0$ or $\Psi_s^{k\tau}=0$. As $\Psi_s^{k\tau}$ is a wave function, it cannot be zero. Therefore, $\frac{\partial \tilde f_k}{\partial e_k}=0$. After setting the diffusion coefficient of Equation (\ref{1.1}) to zero the optimal lock-down intensity is,
\[
e^*=\left(\frac{{\tilde {\mathcal B}}}{{\tilde{\mathcal C}}}\right)^{\frac{1}{\theta-1}},
\]
where
\begin{multline*}
{\tilde {\mathcal B}}=\exp\{-\rho s\}\sum_{k=1}^K\theta_kz_k{\tilde{\mathcal A}}+\left(s-\frac{1}{z_k}\right)\kappa_0+ \left(s-\frac{1}{R_k}\right)(\tau+\zeta)R_k\\+\left(s-\frac{1}{W_k}\right)\left\{\omega_k-\varkappa e_k \mathcal Q(|\omega_k|)\left[\omega_k-\omega_l\right]\right\}+\mbox{$\frac{1}{2}$}\sigma_8^k(\omega_k-\omega_l)\frac{1}{W_k^2},
\end{multline*}
and,
\[
{\tilde{\mathcal C}}=\theta \beta_2^kM\left(\frac{1}{S_k}-\frac{1}{I_k}\right)\left(\frac{S_kI_k}{1+rI_k+\eta N_k}\right)\left[1-\frac{\kappa_0(z_k)^\gamma}{\kappa_1 p(\eta_{k_i},s)}\right]>0.
\]
The expression $e^*$ represents an optimal lock-down intensity. If all of the state variables attain their optimal value then $e^*$ is a global lock-down intensity.

\section{Discussion}

This paper discuss about a stochastic optimization problem where a policy maker's objective is to minimize a dynamic social cost $\mathbf H_\theta$ subject to a lock-down fatigue dynamics, COVID-19 infection $\beta^k$, a multi-risk SIR model and opinion dynamics of risk-group $k$ where \emph{lock-down} intensity is used as my control variable. Under certain conditions I was able to find out a closed form solution of lock-down intensity $e^*$. First I have subdivided the entire population into $K$ number of age-groups such that every person in a group has homogeneous opinion towards vaccination against COVID-19. As each of these group are vulnerable to the pandemic, I renamed the \emph{age-group} as \emph{risk-group} which is consistent with the literature \citep{acemoglu2020}.  As heterogenous opinion of individuals in a risk-group $k$ concerns with multi-layer network, it would be a future research in this context.

A Feynman-type path integral approach has been used to determine a Fokker-Plank type of equation which reflects the entire pandemic scenario. \emph{Feynman path integral} is a quantization method which uses the quantum \emph{Lagrangian} function, while Schr\"odinger's quantization uses the \emph{Hamiltonian} function \citep{fujiwara2017}. As this path integral approach provides a different view point from Schr\"odinger's quantization,it is very useful tool not only in quantum physics but also in engineering, biophysics, economics and finance \citep{kappen2005,anderson2011,yang2014path,fujiwara2017}. These two methods are believed to be equivalent but, this equivalence has not fully proved mathematically as the mathematical difficulties lie in the fact that the \emph{Feynman path integral} is not an integral by means of a countably additive measure \citep{johnson2000,fujiwara2017}. As the complexity and memory requirements of grid-based partial differential equation (PDE) solvers increase exponentially as the dimension of the system increases, this method becomes impractical in the case with high dimensions \citep{yang2014path}. As an alternative one can use a Monte Carlo scheme and this is the main idea of \emph{path integral control} \citep{kappen2005,theodorou2010,theodorou2011,morzfeld2015}. This \emph{path integral control} solves a class a stochastic control problems with a Monte Carlo method for a HJB equation and this approach avoids the need of a global grid of the domain of HJB equation \citep{yang2014path}. In future research I want to use this approach under $\sqrt{8/3}$ Liouville-like quantum gravity surface \citep{pramanik2021}.

\section*{Appendix}
\subsection*{Proof of Lemma \ref{l0}}
For each optimal solution $z_k^*\in\mathbb F^2$ of Equation (\ref{0}), define a squared integrable progressively measurable process $X(z_k^*)$ by 
\begin{equation}\label{2}
X(z_k^*)_s=z_k(0)+\int_0^t\hat\mu(s,e_k,p,z_k)ds+\int_0^t\sigma_0^k(z_k)dB_0^k(s).
\end{equation} 
I will show that $X(z_k^*)\in\mathbb F^2$. Furthermore, as $z_k^*$ is a solution of Equation (\ref{0}) iff $X(z_k^*)=z_k^*$, I will show that $X$ is the strict contraction of the Hilbert space $\mathbb F^2$. Using the fact that 
\[
|\hat\mu(s,e_k,p,z_k)|^2\leq c_0\left[1+|z_k|^2+|\hat\mu(s,e_k,p,z_k(0))|^2|\right]
\]
yields
\begin{equation}\label{3}
||X(z_k)||^2\leq 4\left[t \E|z_0(k)|^2+\E\int_0^t\bigg|\int_0^s\hat\mu(s',e_k,p,z_k)ds'\bigg|^2ds+t\E\sup_{0\leq s\leq t}\bigg|\int_0^s\sigma_0^k(z_k(s'))dB_0^k(s')\bigg|^2ds\right].
\end{equation}
Assumption \ref{as1} implies $t\E|z_k(0)|^2<\infty$. It will be shown that the second and third terms of the right hand side of the inequality (\ref{3}) are also finite. Assumption \ref{as0} implies,
\begin{multline*}
\E\int_0^t\bigg|\int_0^s\hat\mu(s',e_k,p,z_k)ds'\bigg|^2ds\leq \E\int_0^ts\left(\int_0^s|\hat\mu(s',e_k,p,z_k)|^2ds'\right)ds\\\leq c_0\E\int_0^t s\left(\int_0^s(1+|\hat\mu(s',e_k,p,z_k(0))|^2+|z_k(s)|^2)ds'\right)ds\\\leq c_0t^2\left(1+||\hat\mu(s',e_k,p,z_k(0))||^2+\E\sup_{0\leq s\leq t}|z_k(s)|^2\right)<\infty.
\end{multline*}
Doob's maximal inequality and Lipschitz assumption (i.e. Assumption \ref{as0}) implies,
\begin{multline*}
t\E\sup_{0\leq s\leq t}\bigg|\int_0^s\sigma_0^k(z_k(s'))dB_0^k(s')\bigg|^2ds\leq 4t\E\int_0^t|\sigma_0^k(z_k(s))|^2ds\\\leq 4c_0\E\int_0^t(1+|\sigma_0^k(z_k(0))|^2+|z_k(s)|^2)ds\\\leq 4c_0t^2\left(1+||\sigma_0^k(z_k(0))||^2+\E\sup_{0\leq s\leq t}|z_k(s)|^2\right)<\infty.
\end{multline*}
As $X$ maps $\mathbb F^2$ into itself, I show that it is strict contraction. To do so I change Hilbert norm $\mathbb F^2$ to an equivalent norm. Following \cite{carmona2016} for $a>0$ define a norm on $\mathbb F^2$ by $$||\xi||_a^2=\E\int_0^t\exp(-as)|\xi_s|^sds.$$ If $z_k(s)$ and $y_k(s)$ are generic elements of $\mathbb{F}^2$ where $z_k(0)=y_k(0)$, then
\begin{multline*}
\E|X(z_k(s))-X(y_k(s))|^2\leq 2\E\bigg|\int_0^\tau[\hat\mu(s',e_k,p,z_k(s'))-\hat\mu(s',e_k,p,y_k(s'))]ds\bigg|^2\\+2\E\bigg|\int_0^\tau[\sigma_0^k(z_k(s'))-\sigma_0^k(y_k(s'))]dB_0^k(s')\bigg|^2\\\leq 2\tau\E\int_0^\tau|\hat\mu(s',e_k,p,z_k(s'))-\hat\mu(s',e_k,p,y_k(s'))|^2ds'+2\E\int_0^\tau|\sigma_0^k(z_k(s'))-\sigma_0^k(y_k(s'))|^2ds'\\\leq c_0(1+\tau)\int_0^\tau\E|z_k(s')-y_k(s')|^2ds',
\end{multline*}
by Lipschitz's properties of drift and diffusion coefficients. Hence.
\begin{multline*}
||X(z_k)-X(y_k)||_a^2=\int_0^t\exp(-as)\E|X(z_k(s)-X(y_k(s)))|^2ds\leq c_0t\int_0^t\exp(-as)\int_0^t\E|z_k(s')-y_k(s')|^2ds'ds\\\leq c_0t\int_0^t\exp(-as)ds\int_0^t\E|z_k(s')-y_k(s')|^2ds'\leq\frac{c_0t}{a}||z_k-y_k||_a^2.
\end{multline*}
Furthermore, if $c_0t$ is very large, $X$ becomes a strict contraction. Finally, for $s\in[0,t]$
\begin{multline*}
\E \sup_{0\leq s\leq t}|z_k(s)|^2=\E \sup_{0\leq s\leq t}\bigg|z_k(0)+\int_0^{s'}\hat\mu(r,e_k,p,z_k(r))dr+\int_0^{s'}\sigma_0^k(z_k(r))dB_0^k(r)\bigg|^2\\\leq 4\left[\E|z_k(0)|^2+s\E\int_0^s|\hat\mu(s',e_k,p,z_k(s'))|^2ds'+4\E\int_0^s|\sigma_0^k(s')|ds'\right]\\\leq c_0\left[1+\E|z_k(0)|^2+\int_0^s\E\sup_{0\leq r\leq s'}|z_k(r)|^2dr\right],
\end{multline*}
where the constant $c_0$ depends on $t$, $||\hat\mu||^2$ and $||\sigma_0^k||^2$. Gronwall's inequality implies,
\begin{equation*}
\E \sup_{0\leq s\leq t}|z_k(s)|^2\leq c_0(1+\E|z_k(0)|^2)\exp{(c_0t)}.
\end{equation*}
Q.E.D.

\subsection*{Proof of Proposition \ref{p0}}
As stochastic differential Equation (\ref{0}) and the SIR represented by the system (\ref{4}) follow Assumption \ref{as0}, there is a unique local solution on continuous time interval $[0,\hat s)$, where $\hat s$ is defined as the explosion point \citep{rao2014}. Therefore, It\^o formula makes sure that there is a positive unique local solution for the system represented by Equations (\ref{0}) and (\ref{4}). In order to show global uniqueness one needs to show this local unique solution is indeed a global solution; in other words, $\hat s=\infty$ almost surely.

Suppose, $m_0>0$ is sufficiently large for the initial values of the state variables $z_k(0)$, $S_k(0)$, $I_k(0)$ and $R_k(0)$ in the interval $[1/m_0,m_0]$. For all $m\geq m_0$ a sequence of stopping time is defined as
\begin{multline*}
\hat s_m=\inf\left\{s\in[0,\hat s]:z_k(s)\notin\left(\frac{1}{m},m\right)\text{or}\ z_k(s)\notin\left(\frac{1}{m},m\right)\text{or}\ S_k(s)\notin\left(\frac{1}{m},m\right)\right.\\\left.\text{or}\ I_k(s)\notin\left(\frac{1}{m},m\right) \text{or}\ R_k(s)\notin\left(\frac{1}{m},m\right)\right\},
\end{multline*}
where it is assumed that the infimum of the empty set is infinity. As the explosion time is non-decreasing in $m$ therefore, $\hat s_\infty=\lim_{m\downarrow \infty}\hat s_m$ and $\hat s_{\infty} \leq \hat s_m$ a.s. I will show $\hat s_\infty=\infty$ a.s. Suppose that the condition $\hat s_\infty=\infty$ a.s. does not hold. Then  $\exists$ a $t>0$ and $\varepsilon>0$ such that $Pr[\hat s_\infty\leq t]>\varepsilon$. Hence, there is an integer $m_1\geq m_0$ such that, $Pr[\hat s_m\leq t]\geq \varepsilon,\ \forall m\geq m_1$.

Like before, define a non-negative $C^3$-function $\mathfrak W:\mathbb R^{4K}\ra \mathbb R$ by
\begin{equation*}
\mathfrak W(z_k,S_k,I_k,R_k)=[z_k-1-\ln (z_k)]+[S_k-1-\ln (S_k)]+[I_k-1-\ln (I_k)]+[R_k-1-\ln (R_k)].
\end{equation*}
It\^o's formula implies
\begin{multline*}
d \mathfrak W(z_k,S_k,I_k,R_k)=\left\{\left(1-\frac{1}{z_k}\right)\left[\kappa_0(1-e_k)-\kappa_1z_kp(\eta_{k_i})\right]+\left(1-\frac{1}{S_k}\right)\right.\\\left.\times\left[\eta N_k-\be^k(e_k,z_k)\frac{S_kI_k}{(1+rI_k)+\eta N_k}-\tau S_k+\zeta R_k\right]+\left(1-\frac{1}{I_k}\right)\right.\\\left.\times\left[\be^k(e_k,z_k)\frac{S_kI_k}{\left[1+rI_k\right]+\eta N_k}-(\mu+\tau)I_k\right]+\left(1-\frac{1}{R_k}\right)\left[\mu I_k-(\tau +\zeta)e_k R_k\right]\right.\\\left.+ \frac{(\sigma_0^k)^2}{2}\left(1-\frac{z_k^*}{z_k}\right)^2+\frac{(\sigma_2^k)^2}{2}\left(1-\frac{S_k^*}{S_k}\right)^2+\frac{(\sigma_3^k)^2}{2}\left(1-\frac{I_k^*}{I_k}\right)^2\right.\\\left.+\frac{(\sigma_4^k)^2}{2}\left(1-\frac{R_k^*}{R_k}\right)^2\right\}ds+\left\{\sigma_0^k\left(1-\frac{1}{z_k}\right)(z_k-z_k^*)+\sigma_2^k\left(1-\frac{1}{S_k}\right)(S_k-S_k^*)\right.\\\left.+\sigma_3^k\left(1-\frac{1}{I_k}\right)(I_k-I_k^*)+\sigma_4^k\left(1-\frac{1}{R_k}\right)(R_k-R_k^*)\right\}dB^k,
\end{multline*}
where I assume $B^k=B_0^k=B_2^k=B_3^k=B_4^k$ or the system has same Brownian motion.
Therefore,
\begin{multline}\label{11}
d \mathfrak W(z_k,S_k,I_k,R_k)=\left\{\zeta R_k+\eta N_k+\mu(1+I_k)+\tau(1+R_k)+\kappa_0\left(1+\frac{e_k}{z_k}\right)\right.\\\left.+\kappa_1p(\eta_{k_i})+I_k\left(\frac{\be^k(e_k,z_k)}{\left[1+rI_k\right]+\eta N_k}+\frac{\mu+\tau}{S_k}\right)+\be^k(e_k,z_k)\frac{S_kI_k}{\left[1+rI_k\right]+\eta N_k}\right.\\\left. +\frac{(\sigma_0^k)^2}{2}\left(1-\frac{z_k^*}{z_k}\right)^2+\frac{(\sigma_2^k)^2}{2}\left(1-\frac{S_k^*}{S_k}\right)^2+\frac{(\sigma_3^k)^2}{2}\left(1-\frac{I_k^*}{I_k}\right)^2\right.\\\left.+\frac{(\sigma_4^k)^2}{2}\left(1-\frac{R_k^*}{R_k}\right)^2-\left[(\tau+\zeta)e_kR_k+2(\mu+\tau)I_k+\kappa_1z_kp(\eta_{k_i})+\frac{\mu I_k}{R_k}\right.\right.\\\left.\left.+\frac{\eta N_k}{S_k}+\kappa_0\left(e_k+\frac{1}{z_k}\right)+\frac{S_k\be(e_k,z_k)}{\left[1+rI_k\right]+\eta N_k}(1+I_k)\right]\right\}ds\\+\left\{\sigma_0^k\left(1-\frac{1}{z_k}\right)(z_k-z_k^*)+\sigma_2^k\left(1-\frac{1}{S_k}\right)(S_k-S_k^*)\right.\\\left.+\sigma_3^k\left(1-\frac{1}{I_k}\right)(I_k-I_k^*)+\sigma_4^k\left(1-\frac{1}{R_k}\right)(R_k-R_k^*)\right\}dB^k\\\leq \left\{\zeta R_k+\eta N_k+\mu(1+I_k)+\tau(1+R_k)+\kappa_1p(\eta_{k_i})+\kappa_0\left(1+\frac{e_k}{z_k}\right)\right.\\\left.+I_k\left(\frac{\be^k(e_k,z_k)}{\left[1+rI_k\right]+\eta N_k}+\frac{\mu+\tau}{S_k}\right)+\be^k(e_k,z_k)\frac{S_kI_k}{\left[1+rI_k\right]+\eta N_k}\right.\\\left. +\frac{(\sigma_0^k)^2}{2}\left(1-\frac{z_k^*}{z_k}\right)^2+\frac{(\sigma_2^k)^2}{2}\left(1-\frac{S_k^*}{S_k}\right)^2+\frac{(\sigma_3^k)^2}{2}\left(1-\frac{I_k^*}{I_k}\right)^2\right.\\\left.+\frac{(\sigma_4^k)^2}{2}\left(1-\frac{R_k^*}{R_k}\right)^2\right\}ds+\left\{\sigma_0^k\left(1-\frac{1}{z_k}\right)(z_k-z_k^*)+\sigma_2^k\left(1-\frac{1}{S_k}\right)\right.\\\left.\times(S_k-S_k^*)+\sigma_3^k\left(1-\frac{1}{I_k}\right)(I_k-I_k^*)+\sigma_4^k\left(1-\frac{1}{R_k}\right)(R_k-R_k^*)\right\}dB^k\\\leq\mathfrak M\  ds+\left\{\sigma_0^k\left(1-\frac{1}{z_k}\right)(z_k-z_k^*)+\sigma_2^k\left(1-\frac{1}{S_k}\right)(S_k-S_k^*)\right.\\\left.+\sigma_3^k\left(1-\frac{1}{I_k}\right)(I_k-I_k^*)+\sigma_4^k\left(1-\frac{1}{R_k}\right)(R_k-R_k^*)\right\}dB^k,
\end{multline}
where $\mathfrak M$ is a positive constant. Integration of both sides of the Inequality (\ref{11}) from $0$ to $\hat s_m\wedge t$ yield
\begin{align*}
&\int_0^{\hat s_m\wedge t}d \mathfrak W[z_k(s),S_k(s),I_k(s),R_k(s)]\\ &\hspace{1cm}\leq \int_0^{\hat s_m\wedge t}\mathfrak M ds+\left\{\sigma_0^k\left(1-\frac{1}{z_k}\right)(z_k-z_k^*)+\sigma_2^k\left(1-\frac{1}{S_k}\right)(S_k-S_k^*)\right.\\&\left.\hspace{2cm}+\sigma_3^k\left(1-\frac{1}{I_k}\right)(I_k-I_k^*)+\sigma_4^k\left(1-\frac{1}{R_k}\right)(R_k-R_k^*)\right\}dB^k,
\end{align*}
where $\hat s_m\wedge t=\min\{\hat s_m, t\}$. After taking expectations on both sides lead to 
\begin{equation*}
\E\mathfrak W[z_k(\hat s_m\wedge t),S_k(\hat s_m\wedge t),I_k(\hat s_m\wedge t),R_k(\hat s_m\wedge t)]\leq\mathfrak M t+\mathfrak W[z_k(0),S_k(0),I_k(0),R_k(0)].
\end{equation*}
Define $\aleph_m=\{\hat s_m\leq t\},\ \forall m\geq m_1$. Previous discussion implies, for any $\varepsilon>0$ there exists an integer $m_1\geq m_0$ such that, $Pr[\hat s_m\leq t]\geq \varepsilon$ therefore, $Pr(\aleph_m)\geq\varepsilon$. For each $\wp\in\aleph_m$,  $\exists$ an $i$ such that $\hbar_i(\hat s_m,\wp)=m\ \text{or}\ 1/m$ for $i=1,...,4$. Therefore, $\mathfrak W\left[z_k(\hat s_m,\wp),S_k(\hat s_m,\wp),I_k(\hat s_m,\wp),R_k(\hat s_m,\wp)\right]$ has the lower bound $\min\{m-1-\ln m, 1/m-1-\ln(1/m)\}$. This yields,
\begin{eqnarray*}
& &\mathfrak M t+\mathfrak W[z_k(0),S_k(0),I_k(0),R_k(0)]\\ & & \geq \E\left\{\mathbbm 1_{\aleph_m(\wp)}\mathfrak W[z_k(\hat s_m),S_k(\hat s_m),I_k(\hat s_m),R_k(\hat s_m)]\right\}\\ & & \geq \varepsilon \min\left\{m-1-\ln (m),\frac{1}{m}-1-\ln\left(\frac{1}{m}\right)\right\},
\end{eqnarray*}
where $\mathbbm 1_{\aleph_m(\wp)}$ is a simple function on $\aleph_m$. Letting $m\downarrow\infty$ leads to $\infty=\mathfrak M t+\mathfrak W[z_k(0),S_k(0),I_k(0),R_k(0)]<\infty$, which is a contradiction. Q.E.D.

\subsection*{Proof of Lemma \ref{l1}}
As stochastic opinion dynamics is on $F$, this surface is oscillatory in nature. Total social interaction variation between two probabilistic interactions\\ $W_{k,\chi_{-k},\omega_{-k}}(s,h)$ and $W_{l,\chi_{-l},\omega_{-l}}(s,h)$ can be defined in terms of a Hahn-Jordon orthogonal decomposition 
\[
W=W_{k,\chi_{-k},\omega_{-k}}-W_{l,\chi_{-l},\omega_{-l}}=W_{k,\chi_{-k},\omega_{-k}}^+-W_{l,\chi_{-l},\omega_{-l}}^-,
\]
such that 
\[
\left|\left|\left(W_{k,\chi_{-k}\omega_{-k}}-W_{l,\chi_{-l},\omega_{-l}}\right)\right|\right|=W_{k,\chi_{-k},\omega_{-k}}^+(F)=W_{l,\chi_{-l},\omega_{-l}}^-(F).
\]
Therefore, for $h\in F$,
\begin{multline*}
\left|W_{k,\chi_{-k},\omega_{-k}}(s,h)-W_{l,\chi_{-l},\omega_{-l}}(s,h)\right|\\ =\left|\int_{\mathbb R}h(s,\omega_k)W_{k,\chi_{-k},\omega_{-k}}^+(F)(d\omega_k)-\int_{\mathbb R}h(s,\omega_k)W_{l,\chi_{-l},\omega_{-l}}^-(F)(d\omega_l)\right|\\ =\left|\left|\left(W_{k,\chi_{-k}\omega_{-k}}(s)-W_{l,\chi_{-l},\omega_{-l}}(s)\right)\right|\right|\left|\int_{\mathbb R}\left[h(s,\omega_k)-h(s,\omega_l)\right]\right.\\\left.\times\frac{W_{k,\chi_{-k},\omega_{-k}}^+(d\omega_k)}{W_{k,\chi_{-k},\omega_{-k}}^+(F)}\times\frac{W_{l,\chi_{-l},\omega_{-l}}^-(d\omega_l)}{W_{l,\chi_{-l},\omega_{-l}}^-(F)}\right|.
\end{multline*}
Therefore,
\[
\left|W_{k,\chi_{-k},\omega_{-k}}(s,h)-W_{l,\chi_{-l},\omega_{-l}}(s,h)\right|\leq \left|\left|\left(W_{k,\chi_{-k}\omega_{-k}}-W_{l,\chi_{-l},\omega_{-l}}\right)\right|\right|.
\]
Supremum over $h\in F$ yields,
\[
\sup\left\{\left|\left(W_{k,\chi_{-k},\omega_{-k}}(s,h)-W_{l,\chi_{-l},\omega_{-l}}(s,h)\right)\right|\right\}\leq \left|\left|\left(W_{k,\chi_{-k}\omega_{-k}}-W_{l,\chi_{-l},\omega_{-l}}\right)\right|\right|.
\]
The reverse inequality can be checked trivially by introducing a simple function $\mathbbm 1_{\mathcal G}$, with $\mathcal G\in \mathcal E$, belong to F. Therefore, we are able to show that
\begin{equation*}
\left|\left|\left(W_{k,\chi_{-k}\omega_{-k}}(s)-W_{l,\chi_{-l},\omega_{-l}}(s)\right)\right|\right|= \sup\left\{\left|\left(W_{k,\chi_{-k},\omega_{-k}}(s,h)-W_{l,\chi_{-l},\omega_{-l}}(s,h)\right)\right|\right\}.
\end{equation*}
Now, by construction , there exists two disjoint subsets $F_+$ and $F_-$ such that, $W^+(F_-)=0=W^-(F_+)$ \citep{moral2004}. For any graph $\mathcal G\in \mathcal E$, $W^+(\mathcal G)=W(\mathcal G \cap F_+)\geq 0$ and, $W^-(\mathcal G)=-W(\mathcal G \cap F_+)\geq 0.$
Hence,
\[
W_{k,\chi_{-k},\omega_{-k}}(\mathcal G \cap F_+)\geq W_{l,\chi_{-l},\omega_{-l}}(\mathcal G \cap F_+),
\]
and,
\[
W_{l,\chi_{-l},\omega_{-l}}(\mathcal G \cap F_-)\geq W_{k,\chi_{-k},\omega_{-k}}(\mathcal G \cap F_-).
\]
Consider $\hat h$ be another probability measure for any $\mathcal G\in \mathcal E$ by,
\[
\hat h(\mathcal G)=W_{k,\chi_{-k},\omega_{-k}}(\mathcal G \cap F_-)+W_{l,\chi_{-l},\omega_{-l}}(\mathcal G \cap F_+).
\]
By construction, 
\begin{equation}\label {9}
\hat h(\mathcal G)\leq W_{k,\chi_{-k},\omega_{-k}}(\mathcal G)\wedge W_{l,\chi_{-l},\omega_{-l}}(\mathcal G),
\end{equation}
and,
\begin{equation}\label{10}
\hat h(F)=W_{k,\chi_{-k},\omega_{-k}}( F_-)+W_{l,\chi_{-l},\omega_{-l}}(F_+).
\end{equation}
As
\begin{multline*}
\left|\left|\left(W_{k,\chi_{-k}\omega_{-k}}(s)-W_{l,\chi_{-l},\omega_{-l}}(s)\right)\right|\right|=W^+(F)=W(F_+)\\=W_{k,\chi_{-k},\omega_{-k}}( F_+)-W_{l,\chi_{-l},\omega_{-l}}(F_-)=1-\left[W_{k,\chi_{-k},\omega_{-k}}( F_+)+W_{l,\chi_{-l},\omega_{-l}}(F_-)\right],
\end{multline*}
by Equation (\ref{10}) one obtains
\begin{equation*}
1-\sup_{\hat h\in \left(W_{k,\chi_{-k},\omega_{-k}}(s),W_{l,\chi_{-l},\omega_{-l}}(s)\right)} \tilde h(F)\leq 1-\hat h(F)=\left|\left|\left(W_{k,\chi_{-k}\omega_{-k}}(s)-W_{l,\chi_{-l},\omega_{-l}}(s)\right)\right|\right|.
\end{equation*}
The reverse inequality is proved as follows. Suppose, $\tilde h$ be a non-negative measure such that for any graph $\mathcal G\in \mathcal E$ we have
\[
\tilde h(\mathcal G)\leq W_{k,\chi_{-k}\omega_{-k}}(\mathcal G) \wedge W_{l,\chi_{-l},\omega_{-l}}(\mathcal G). 
\]
Assuming $\mathcal G=F_+$ and $\mathcal G=F_-$ give us
\[
\tilde h (F_+)\leq W_{k,\chi_{-k},\omega_{-k}}(F_+)\ \text{and,}\ \tilde h (F_-)\leq W_{l,\chi_{-l},\omega_{-l}}(F_-).
\]
Therefore,
\[
\tilde h(F)\leq W_{k,\chi_{-k},\omega_{-k}}(F_+)+W_{l,\chi_{-l},\omega_{-l}}(F_-)=1-\left|\left|\left(W_{k,\chi_{-k}\omega_{-k}}(s)-W_{l,\chi_{-l},\omega_{-l}}(s)\right)\right|\right|,
\]
which implies
\[
1-\tilde h(F)\geq \left|\left|\left(W_{k,\chi_{-k}\omega_{-k}}(s)-W_{l,\chi_{-l},\omega_{-l}}(s)\right)\right|\right|.
\]
Taking the infimum over all the distributions $\tilde h \leq W_{k,\chi_{-k}\omega_{-k}}(s)$ and $W_{l,\chi_{-l},\omega_{-l}}(s)$, we get 
\[
\left|\left|\left(W_{k,\chi_{-k}\omega_{-k}}(s)-W_{l,\chi_{-l},\omega_{-l}}(s)\right)\right|\right|=1-\sup_{\hat h\in \left(W_{k,\chi_{-k},\omega_{-k}}(s),W_{l,\chi_{-l},\omega_{-l}}(s)\right)} \hat h(F).
\] To prove the final part of the lemma note that,
\[
W_{l,\chi_{-l},\omega_{-l}}(F_+)=W_{k,\chi_{-k},\omega_{-k}}(F_+)\wedge W_{l,\chi_{-l},\omega_{-l}}(F_+),
\]
and 
\[
W_{k,\chi_{-k},\omega_{-k}}(F_-)=W_{k,\chi_{-k},\omega_{-k}}(F_-)\wedge W_{l,\chi_{-l},\omega_{-l}}(F_-).
\]
Hence,
\begin{multline*}
\hat h(F)=W_{k,\chi_{-k},\omega_{-k}}(F_-)+W_{l,\chi_{-l},\omega_{-l}}(F_+)\\=\left[W_{k,\chi_{-k},\omega_{-k}}(F_-)\wedge W_{l,\chi_{-l},\omega_{-l}}(F_-)\right]+\left[W_{k,\chi_{-k},\omega_{-k}}(F_+)\wedge W_{l,\chi_{-l},\omega_{-l}}(F_+)\right].
\end{multline*}
As $F_+$ and $F_-$ are mutually exclusive, therefore,
\[
\hat h(F)\geq\inf\sum_{i=1}^I\left[W_{k,\chi_{-k},\omega_{-k}}(\mathcal G_i)\wedge W_{l,\chi_{-l},\omega_{-l}}(\mathcal G_i) \right],
\]
where the infimum is taken over all resolutions of $F$ into pairs of nonintersecting subgraphs $\mathcal G_i$, $1\leq i\leq I$, $I\geq 1$. Reverse inequality can be shown by using the definition of $\hat h$. By Equation (\ref{10}) for any finite subgraph $\mathcal G_i\in \mathcal E$, we have 
\[
\hat h(\mathcal G_i)\leq W_{k,\chi_{-k},\omega_{-k}}(\mathcal G_i)\wedge W_{l,\chi_{-l},\omega_{-l}}(\mathcal G_i).
\]
Therefore,
\[
\hat h(F)=\sum_{i=1}^I\hat h(\mathcal G_i)\leq \sum_{i=1}^I\left[W_{k,\chi_{-k},\omega_{-k}}(\mathcal G_i)\wedge W_{l,\chi_{-l},\omega_{-l}}(\mathcal G_i)\right]
\]
By taking the infimum over all subgraphs yields \[
\left|\left|\left(W_{k,\chi_{-k}\omega_{-k}}(s)-W_{l,\chi_{-l},\omega_{-l}}(s)\right)\right|\right|=1-\inf\sum_{i=1}^I\left(W_{k,\chi_{-k},\omega_{-k}}(s,\mathcal G_i)\wedge W_{l,\chi_{-l},\omega_{-l}}(s,\mathcal G_i)\right),
\]
since 
\[
\hat h(F)=1-\left|\left|\left(W_{k,\chi_{-k}\omega_{-k}}(s)-W_{l,\chi_{-l},\omega_{-l}}(s)\right)\right|\right|.
\]
This completes the proof. Q.E.D.

\subsection*{Proof of Proposition \ref{p1}}

Consider $\mho:\Omega_E\ra\mathbb E$ is an increasing function which represents the influence of risk-group $k$ in the network which is a convex function of the odds of themselves to get the signals from the neighbors about their social interactions and is defined by $\hat\pi=\log[\varrho/(1-\varrho)]$. Assume for $\mathfrak q_3$, the signal profile $\varrho$ of risk-group $k$ is in  $\mathbb D\subseteq(0,1)$. Now suppose, $\mathcal J$ is the total number of interactions of risk-group $k$ with open edges with the set of edges with boxes $\Lambda$ as $\mathbb E_\Lambda$. Then by Theorem $4.2$ of \cite{grimmett1995} and by Picard-Lindelof theorem there exists a unique random opinion in $G$ \citep{board2021}. Q.E.D.

\subsection*{Proof of Proposition \ref{p2}}
I have divided the proof into two cases.

$\mathbf{Case\ I}$: There are total $K$-risk-groups with an individual risk-group $k$ such that $k=1,2,...,K$. I assume that $m\subset \mathbb N$, a set $\beth$ with condition $|\beth|=m+1$, and affinely independent state variables and lock-down intensity $\{Z_k(s)\}_{k\in \beth}\subset \mathbb R^{6K}\times G$ such that $\widetilde \Xi$ coincides with the simplex convex set of $\{ Z_k(s)\}_{k\in \beth}$. For each $Z(s)\subset \Xi$, there is a unique way in which the vector $Z(s)$ can be written as a convex combination of the extreme valued state variables and lock-down intensity , namely, $Z(s)=\sum_{k\in\beth}\alpha_k(s,Z)Z_k(s)$ such that $\sum_{k\in\beth}\alpha_k(s,Z)=1$ and $\alpha_k(s,Z)\geq 0,\ \forall k\in\beth$ and $s\in[0,t]$. For each risk-group $k$, define a set 
\[
\widetilde\Xi_k:=\left\{Z\in\widetilde\Xi:\a_k[\mathcal L_k(s,Z)]\leq\a_k(s,Z)\right\}.
\]
By the continuity of the quantum Lagrangian of $k^{th}$ risk-group $\{\mathcal L_k\}_{k\in\beth}$, $\widetilde\Xi_k$ is closed. Now we claim that, for every $\tilde \beth \subset \beth$, the convex set consists of $\{Z_k\}_{k\in\tilde\beth}$ is proper subset of $\bigcup_{k\in\tilde\beth}\widetilde\Xi_k$. Suppose $\tilde\beth\subset\beth$ and $Z(s)$ is also in the non-empty, convex set consists of the state variables and the lock-down intensity $\{Z_k(s)\}_{k\in\tilde\beth}$. Therefore, there exists $k\in\tilde\beth$ such that $\a_k(s,Z)\geq\a_k\left[\mathcal L_k(s,Z)\right]$ which implies $Z(s)\in\tilde\Xi\subset \bigcup_{l\in\tilde\beth}\tilde\Xi_l$. By \emph{Knaster-Kuratowski-Mazurkiewicz Theorem}, there is $\bar Z_k^*\in \bigcap_{k\in\beth}\tilde\Xi_k$, in other words, the condition $\a_k\left[\mathcal L_k(s,\bar Z_k^*)\right]\leq\a_k(s,\bar Z_k^*)$ for all $k\in\beth$ and for each $s\in[0,t]$ \citep{gonzalez2010}. Hence, $\mathcal L_k(s,\bar Z_k^*)=\bar Z_k^*$ or $\mathcal L_k$ has a fixed-point. 

$\mathbf{Case\ II}$: Again consider $\widetilde\Xi\subset \mathbb R^{6K}\times G$ is a non-empty, convex and compact set. Then for $m\subset \mathbb N$, a set $\beth$ with condition $|\beth|=m+1$, and affinely independent state variables and lock-down intensity $\{Z_k(s)\}_{k\in \beth}\subset \mathbb R^{6K}\times G$ such that $\widetilde\Xi$ is a proper subset of the convex set based on $\{Z_k(s)\}_{k\in\beth}$ for all $s\in[0,t]$. Among all the simplices, suppose $\hat\aleph$ is the set with smallest $m$. Let $\tilde Z(s)$ be a dynamic point in the $m$-dimensional interior of $\hat\aleph$. Define ${\hat{\mathcal L}}_k$, an extension of $\mathcal L_k$ to the whole simplex $\hat\aleph$, as follows. For every $Z(s)\in\hat\aleph$, let 
\[
\bar\zeta(s,Z):\max\left\{\bar\zeta\in[0,1]:(1-\bar\zeta)\tilde Z(s)+\bar\zeta Z(s)\in\widetilde\Xi\right\},\ \forall s\in[0,1],
\]
and,
\[
{\hat{\mathcal L}}_k(s,Z):\mathcal L_k\left\{\left[1-\bar\zeta(s,Z)\right]\tilde Z(s)+\bar\zeta(s,Z) Z(s)\right\}.
\]
Therefore, $\bar\zeta$ is continuous which implies ${\hat{\mathcal L}}_k(s,Z)$ is continuous. Since the codomain of ${\hat{\mathcal L}}_k(s,Z)$ is in $\tilde\Xi$, every fixed-point of ${\hat{\mathcal L}}_k(s,Z)$ is also a fixed-point of $\mathcal L_k$. Now by \emph{$\mathbf{Case\  I}$}, ${\hat{\mathcal L}}_k(s,Z)$ has a fixed-point and therefore, $\mathcal L_k$ also does. Q.E.D.

\section*{Funding declaration}
No funding was used to write this paper.

\section*{Conflict of interest}
The author declares that he has no conflicts of interest.

\section*{Author contribution}
The author declares that  he has solely contributed the whole paper.
\bibliography{bib}
\end{document}